\renewcommand{\leq}{\leqslant}
\newcommand{\wideangleup}{\DOTSB\wideangleupbin}
\DeclareRobustCommand{\wideangleupbin}{
  \mathbin{\text{\usefont{LS1}{stix2frak}{m}{n}\symbol{"60}}}
}
\DeclareMathOperator{\sign}{sgn}
\DeclareMathOperator{\supp}{supp}
\DeclareMathOperator{\dist}{dist}
\DeclareMathOperator{\inter}{int}
\newtheorem{theorem}{Theorem}
\newtheorem{lemma}{Lemma}
\newtheorem{proposition}{Proposition}
\newtheorem{corollary}{Corollary}
\theoremstyle{definition}
\newtheorem{definition}{Definition}
\newtheorem{remark}{Remark}
\newenvironment{enbibliography}{\vspace{-0.5cm}}
\begin{document} 
\title{Kru\v{z}kov-type uniqueness theorem
for the chemical flood conservation law system with local vanishing viscosity admissibility}
\author{Sergey Matveenko\footnote{ Chebyshev Laboratory, St. Petersburg State University, 14th Line V.O., 29, Saint Petersburg 199178 Russia. E-mail: matveis239@gmail.com.}, Nikita Rastegaev\footnote{St. Petersburg Department of Steklov Mathematical Institute
of Russian Academy of Sciences, 27 Fontanka, 191023, St. Petersburg, Russia. E-mail: rastmusician@gmail.com.}}
\renewcommand{\today}{}
\maketitle
\abstract{
We study the uniqueness of solutions of the initial-boundary value problem in the quarter-plane for the chemical flood conservation law system in the class of piece-wise $\mathcal C^1$-smooth functions under certain restrictions. The vanishing viscosity method is used locally on the discontinuities of the solution to determine admissible and inadmissible shocks. The Lagrange coordinate transformation is utilized in order to split the equations. The proof of uniqueness is based on an entropy inequality similar to the one used in the classical Kru\v{z}kov's theorem.
}

\section{Introduction}
We study the uniqueness of solutions of the conservation law system
\begin{equation}\label{eq:main_system_chem_flood}
\begin{cases} 
s_t + f(s, c)_x = 0, \\
(cs + a(c))_t + (cf(s,c))_x  = 0.
\end{cases}
\end{equation}
This system is often used to describe the chemical flood of oil reservoir in enhanced oil recovery methods. Here $(x,t)\in\mathbb{R}_+^2$, $s$ is the saturation of the water phase, $c$ is the concentration of the chemical agent dissolved in water, $f$ denotes the fractional flow function, usually S-shaped after Buckley--Leverett \cite{BL}, and $a$ describes the adsorption of the chemical agent on the rock, usually concave like the classical Langmuir curve (see Fig.~\ref{fig:BL_ads}).

We study the solutions of the initial-boundary value problem
\begin{align}
\label{eq:Initial_boundary_problem}
\begin{split}
&s(x,0) = s^x_0(x), \quad c(x,0) = c^x_0(x), \quad x\geqslant 0,\\
&s(0,t) = s^t_0(t), \quad c(0,t) = c^t_0(t),  \quad t\geqslant 0,
\end{split}
\end{align}
and under certain restrictions on the parameters of the problem and the class of solutions we prove the uniqueness theorem, that is we prove that two different solutions from the described class with the same initial-boundary data could not exist. 

Note that the problem describing constant injection into a homogeneously filled reservoir, for example
\begin{align*}
&s_0^x(x) = c_0^x(x) = 0, \quad x\geqslant 0, \\
&s_0^t(t) = c_0^t(t) = 1, \quad t\geqslant 0,
\end{align*}
is equivalent to the Riemann problem
\begin{equation*}
    (s,c)(x,0)=
    \begin{cases}
        (1,1),& \text{if } x\leq 0,\\
        (0,0),& \text{if } x>0.
    \end{cases}
\end{equation*}
The Riemann problem for the system \eqref{eq:main_system_chem_flood} was studied in \cite{JnW} and solutions for it are known. The uniqueness of vanishing viscosity solutions for it was also considered in \cite{Shen}. But the conditions \eqref{eq:Initial_boundary_problem} also cover a lot of more complicated problems, including the slug injection problem considered in \cite{PiBeSh06}. We utilize the Lagrange coordinate transformation described in that paper to split the equations and prove a uniqueness theorem similar in proof to the well-known Kru\v{z}kov's theorem \cite{Kruzhkov} for the general case including more complex problems involving, for example, more than one slug of chemical agent or tapering \cite{Tapering}.
When considering a variable injection such as slug injection, it makes more sense for practical purposes to fix the injection profile at $x=0$ instead of just fixing the initial data on the whole $x$ axis. That's why we work with the initial-boundary value problem in the form \eqref{eq:Initial_boundary_problem} instead of solving an arbitrary Cauchy problem.

As noted by Wagner \cite{Wa87} in the context of gas dynamics, the Lagrange coordinate transformation ceases to be a one-to-one mapping in the presence of vacuum states (in our context they correspond to zero flow states $s = f(s, c) = 0$). Therefore, a significant portion of this paper is dedicated to the study of the zero flow area of our solution in order to exclude it when changing coordinates. An additional assumption on the behavior of $|c_x|$ near the shocks is required for the proposed scheme of proof in that portion (see (W2) in Definition \ref{def:solution}).

In order to determine the admissible solutions, we follow the paradigm of the classical work by Oleinik \cite{Oleinik} and require our weak solutions to have a locally finite number of shocks with a certain admissibility condition on each of them. We apply the local variant of the vanishing viscosity condition introduced in \cite{Bahetal} (see (W4) in Definition \ref{def:solution}). On certain types of shocks this condition is equivalent to the classical Oleinik E-condition, and we use this equivalence to simplify the transfer of our admissibility condition into the Lagrange coordinates. This form of admissibility is convenient for practical applications, since for any analytically constructed solution it is easier to verify the set of conditions given in Definition \ref{def:solution}, than it is to verify the classical vanishing viscosity condition. In the future we aim to derive some or most of the conditions (W1)--(W4) in Definition \ref{def:solution} from the classical vanishing viscosity condition.

The paper has the following structure. Sect.~\ref{sec:restrictions} lists all restrictions we place on the parameters of the problem, i.e. on the initial-boundary conditions, on the flow function $f$ and on the adsorption function $a$. Sect.~\ref{sec:admissibility} defines the class of admissible solutions and derives the travelling wave dynamic system for the dissipative system, which helps analyze the set of admissible shocks. Sect.~\ref{sec:Lagrange} describes the Lagrange coordinate transformation. First, the zero flow area is analyzed. Then, the weak formulation of the equations is used to derive the transformed equations in new coordinates. The qualities of the new flow function are also derived in this section. Sect.~\ref{sec:entropy} describes how the shocks in original coordinates map into shocks in the Lagrange coordinates. The admissibility conditions are also transfered to the new coordinates and the Kru\v{z}kov-type integral entropy inequalities are derived for the transformed system. Sect.~\ref{sec:uniqueness} contains the proof of the uniqueness theorem based on the entropy inequalities obtained in the previous section. Finally, Sect.~\ref{sec:discussions} contains discussions on the result as well as the list of possible future generalizations and developments of the proposed techniques. 

\section{Restrictions on problem parameters}
\label{sec:restrictions}

\subsection{Restrictions on the initial-boundary data}
The ultimate goal is to prove the uniqueness theorem for arbitrary initial and boundary data, but in this paper we will limit ourselves with the following restrictions on the functions from \eqref{eq:Initial_boundary_problem}:
\begin{itemize}
    \item[(S1)] $s^x_0(x) = 0$ for all $x\geqslant x^0$ for a fixed $x^0\in[0,+\infty]$;
    \item[(S2)] $s^x_0(x) \geqslant \delta^0 > 0$ for all $0 \leqslant x < x^0$;
    \item[(S3)] $s^t_0(t) \geqslant \delta^0 > 0$ for all $t\geqslant 0$.
\end{itemize}
Fully lifting these assumptions would create complications in the studying of the zero flow area in Sect.~\ref{sec:zero-flow} and require a lot of technical considerations that we leave for future works on the subject.

\subsection{Restrictions on the flow function}
The following assumptions (F1)--(F4) for the fractional flow function $f$ are considered (see Fig.~\ref{fig:BL_ads}a for an example of function~$f$). 
\begin{enumerate}
    \item[(F1)] $f\in \mathcal C^2([0,1]^2)$; $f(0, c)=0$, $f(1, c)= 1$ for all $c\in[0,1]$;
    \item[(F2)] $f_s(s, c)>0$ for $0<s<1$, $0 \leq c \leq 1$;  $f_s(0,c)=f_s(1,c)=0$  for all $c\in[0,1]$;
    \item[(F3)] $f$ is $S$-shaped in $s$: for each $c \in [0,1]$ function $f(\cdot,c)$ has a unique point of inflection $s^I =  s^I(c) \in (0, 1)$, such that 
    \begin{itemize}
        \item[-] $f_{ss}(s, c)>0$ for $0<s<s^I(c)$,
        \item[-] $f_{ss}(s, c)<0$ for $s^I(c)<s<1$; 
    \end{itemize}
    \item[(F4)] $f$ is monotone in $c$, i.e. one of the variants holds:
    \begin{itemize}
        \item[(F4.1)] 
        $f_c(s, c)<0$ for all $s, c \in (0,1)$;
        \item[(F4.2)] $f_c(s, c)>0$ for all $s, c \in (0,1)$.
    \end{itemize}
\end{enumerate}

\begin{remark}
We will assume (F4.1) for all proofs and figures. The proof of the uniqueness theorem does not change if the monotonicity of $f$ with respect to $c$ is reversed.
\end{remark}

Sometimes in applications function $f$ is supported on $(s_{\min}, s_{\max})$ where $0<s_{\min}<s_{\max}<1$, i.e. in (F1) we have $f(s_{\min}, c) = 0$, $f(s_{\max}, c) = 1$. If $s_{\min}$ and $s_{\max}$ do not change with $c$, then we propose a change of variable
\[
\widetilde{s} = s - s_{\min}, \quad \widetilde{a}(c) = a(c) + s_{\min} c,
\]
which shifts $s_{\min}$ to zero. It is easy to see that the value of $s_{\max}$ has no effect on the viability of this paper's proofs, therefore this case is covered.

The S-shaped assumption (F3) comes naturally from oil recovery applications where the flow function is obtained as the ratio of the water mobility and the total mobility of the phases. When the phase mobilities are given by power functions (the so-called Corey-type model) it was proven in \cite{Castaneda} that the resulting flow function is S-shaped. For a more general sufficient condition for an S-shaped flow function applicable to other mobility models see \cite{RastS-Shaped}.

\subsection{Restrictions on the adsorption function}
The adsorption function $a=a(c)$ satisfies the following assumptions (see Fig.~\ref{fig:BL_ads}b for an example of function~$a$):
\begin{itemize}
    \item[(A1)] $a \in \mathcal C^2([0,1])$, $a(0) = 0$;
    \item[(A2)] $a_c(c)>0$ for $0<c<1$;
    \item[(A3)] $a_{cc}(c)<0$ for $0<c<1$.
\end{itemize}

\begin{figure}[htbp]
    \centering
    \includegraphics[width=0.4\textwidth]{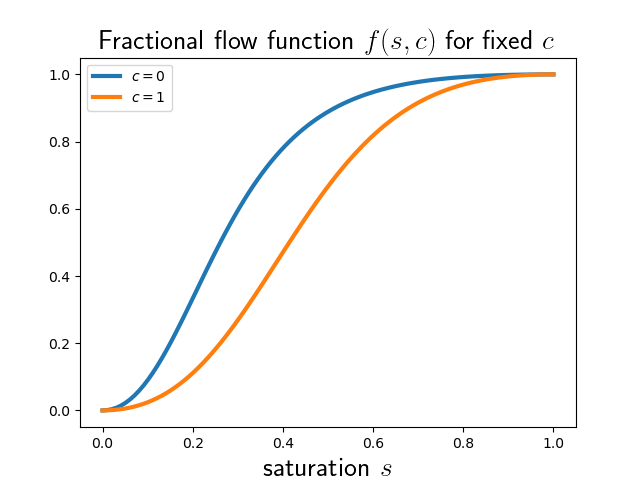}
    \includegraphics[width=0.4\textwidth]{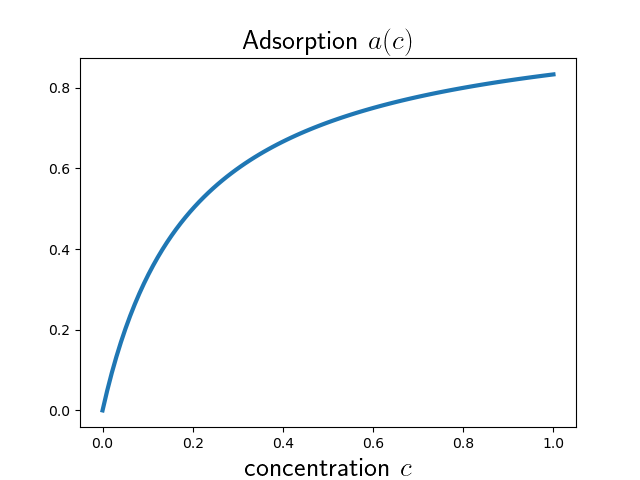}\\
    (a)\qquad\hfil\qquad  
    (b)\hfil
    \caption{Examples of (a) flow function $f(s,c)$;
    (b) adsorption function $a$.}
    \label{fig:BL_ads}
\end{figure}

\section{Admissible solutions of chemical flood system}
\label{sec:admissibility}

\subsection{Admissible weak solutions}
It is well known that without sufficient restrictions the hyperbolic conservation laws admit multiple weak solutions. Some conditions must be imposed to obtain a uniqueness theorem. There are several approaches to formulating such restrictions. The most well-known methods for a single scalar conservation law are as follows:
\begin{itemize}
    \item \emph{Vanishing viscosity method.} The admissible weak solutions are the limits of smooth solutions of a dissipative system. This approach was used in \cite{Kruzhkov} to obtain global integral entropy inequalities.
    \item \emph{Shock conditions.} The admissible weak solutions are required to be a classical solution almost everywhere, except for a finite number of shocks (jump discontinuities) with a certain entropy condition held locally on the shocks. The famous Oleinik E-condition \cite{Oleinik} is one of the most well-known shock conditions. 
\end{itemize}
There are strong connections between these different methods, especially in the case of self-similar solutions of the Riemann problem. In this work we propose a mix of these two approaches:
\begin{itemize}
    \item \emph{Local vanishing viscosity method.} The admissible weak solutions are required to be a classical solution almost everywhere except for a locally finite number of shocks (jump discontinuities) with a local version of the vanishing viscosity condition on the shocks. 
\end{itemize}

\begin{definition}\label{def:solution}
We call $(s, c)$ a piece-wise $\mathcal C^1$-smooth weak solution of \eqref{eq:main_system_chem_flood} with vanishing viscosity admissible shocks and locally bounded ``variation'' of $c$, if:
\begin{itemize}
    \item[(W1)] Functions $s$ and $c$ are continuously differentiable everywhere, except for a locally finite number of $\mathcal C^1$-smooth curves, where one or both of them have a jump discontinuity.
    \item[(W2)] For any compact $K$ away from the axes, the derivative $|c_x(x,t)| < C_K$ is uniformly bounded for all $(x,t) \in K$ not on jump discontinuities.
    \item[(W3)] Functions $s$ and $c$ satisfy \eqref{eq:main_system_chem_flood} in a classical sense inside the areas, where they are continuously differentiable.
    \item[(W4)] On every discontinuity curve $\Gamma$ given by $\gamma(t)$ at any point $(\gamma(t_0), t_0)$ the jump of $s$ and $c$ 
    \[
    s^\pm = s(\gamma(t_0)\pm 0, t_0), \quad c^\pm = c(\gamma(t_0)\pm 0, t_0)
    \]
    with velocity $v = \gamma_t(t_0)$ could be obtained as a limit as $\varepsilon\to 0$ of travelling wave solutions
    \[
    s(x,t) = \mathbf{s}\Big(\frac{x - vt}{\varepsilon}\Big), \quad c(x,t) = \mathbf{c}\Big(\frac{x - vt}{\varepsilon}\Big)  
    \]
    of the dissipative system
\begin{equation}\label{eq:main_system_dissipative}
\begin{cases} 
s_t + f(s, c)_x = \varepsilon s_{xx}, \\
(cs + a(c))_t + (cf(s,c))_x  = \varepsilon (c s_x)_x + \varepsilon c_{xx},
\end{cases}
\end{equation}
with boundary conditions
\[
\mathbf{s}(\pm\infty) = s^\pm, \quad \mathbf{c}(\pm\infty)  = c^\pm.
\]
\end{itemize}
\end{definition}
In this paper we only consider piece-wise $\mathcal C^1$-smooth weak solutions with vanishing viscosity admissible shocks and locally bounded ``variation'' of $c$, therefore, from now on we will just call them solutions for brevity.

Note, that \eqref{eq:main_system_dissipative} differs slightly from \cite[(4.8)]{JnW}, but if we compare the admissible shocks for these systems following the transformations and analysis of sections \ref{sec:sec2-Hopf} and \ref{sec:sec2-nullclines} below step-by-step for both systems, it is clear that they yield exactly the same set of admissible shocks. Moreover, if we consider the system \cite[(3)]{Bahetal}, which accounts for capillary pressure, polymer diffusion and dynamic adsorption, we see that it describes the same set of admissible shocks regardless of the values of small coefficients or capillary pressure function. Therefore, among all of these equivalent variants we settled on the system \eqref{eq:main_system_dissipative}, since it results in a simpler looking dynamic system \eqref{eq:dyn_sys_cap_diff} for travelling wave solutions.

Note also, that the condition (W4) is not the classical form of vanishing viscosity condition. Usually (e.g. in the original Kru\v{z}kov's theorem) the weak solution is assumed to be a limit of smooth solutions of \eqref{eq:main_system_dissipative} as a whole. Instead, we only require that shocks locally behave similar to primitive travelling wave solutions of \eqref{eq:main_system_dissipative}. This form of admissibility is easier to verify when constructing weak solutions via the characteristics method. We believe it is possible to derive the condition (W4) from the classical form of vanishing viscosity with (W1), and we intend to formalize this derivation in the future generalizations of the current result.

Moreover, it is clear that the requirement (W2) and the limitation on the number of shocks in (W1) further restrict the class of problems we are able to consider. 
The assumption (W2) is especially strict. We've chosen it to be more clear, though in practice we only use it in subsection \ref{sec:zero-flow}, where it is sufficient for some integrals of $|c_x|$ along characteristic curves to be bounded on compacts. Ultimately, with some complicated modifications of the proofs, it should be possible to weaken (W2) as follows:
\begin{itemize}
    \item[(W2*)] For any compact $K$ away from the axes, the integral $\int_{t_1}^{t_2}|c_x(x,t)|\, dt < C_K$ is uniformly bounded for all $(x,t_1), (x,t_2)\in K$.
\end{itemize}
We have hope that these properties could be either weakened further or proved a priori for a wide class of initial-boundary conditions. However, we leave such considerations for future works and impose them as assumptions for now. 

\subsection{Travelling wave dynamic system}
\label{sec:sec2-Hopf}
The assumption (W4) for the solution is that shocks are admissible if and only if they could be obtained as a limit of travelling wave solutions for a system with additional dissipative terms as these terms tend to zero. In this section we analyze such travelling wave solutions and derive a dynamic system that describes them.

Consider a shock between states $(s^-, c^-)$ and $(s^+, c^+)$ moving with velocity $v$. In order to check if it is admissible, we are looking for a travelling wave solution 
\[
s(x,t) = s\Big(\frac{x - vt}{\varepsilon}\Big), \quad c(x,t) = c\Big(\frac{x - vt}{\varepsilon}\Big)
\]
for the dissipative system \eqref{eq:main_system_dissipative}
satisfying the boundary conditions 
\[
s(\pm\infty) = s^\pm, \quad c(\pm\infty)  = c^\pm.
\]
Substituting this travelling wave ansatz into the system \eqref{eq:main_system_dissipative} and denoting $\xi = \frac{x - vt}{\varepsilon}$, we get the system
\begin{equation*}
\begin{cases} 
-v s_\xi + f(s, c)_\xi = s_{\xi\xi}, \\
-v (cs + a(c))_\xi + (cf(s,c))_\xi  = (c s_\xi)_\xi + c_{\xi\xi}.
\end{cases}
\end{equation*}
Integrating the equations over $\xi$ we arrive at the travelling wave dynamic system
\begin{equation}\label{eq:dyn_sys_cap_diff}
\begin{cases} 
s_\xi = f(s, c) - v (s + d_1), \\
c_\xi = v (d_1 c - d_2 - a(c)).
\end{cases}
\end{equation}
The values of $d_1$ and $d_2$ are obtained from the boundary conditions:
\begin{align*}
    vd_1 & = -vs^\pm + f(s^\pm, c^\pm), \\
    vd_2 & = v d_1 c^\pm - v a(c^\pm),
\end{align*}
namely, in the case when $c^+ \neq c^-$,
\begin{equation*}
d_1 = \dfrac{a(c^-) - a(c^+)}{c^- - c^+}, \quad d_2 =  \dfrac{c^+ a(c^-) - c^- a(c^+)}{c^- - c^+}.
\end{equation*}
Additionally, the same boundary conditions yield us the Rankine--Hugoniot conditions
\begin{equation}
\label{eq:RH-1}
\begin{split}
    v[s]&=[f(s,c)],
    \\
    v[cs+a(c)]&=[cf(s,c)],
\end{split}
\end{equation}
where $[q(s,c)]=q(s^+,c^+)-q(s^-,c^-)$\footnote{Note the order of ``$+$'' and ``$-$'' terms in this definition. It could be different in different sources. We follow certain proof schemes of \cite{Serre1}, so our order coincides with their.}. Thus, for every set of shock parameters $(s^{\pm}, c^{\pm})$ and $v$ satisfying \eqref{eq:RH-1}, we can construct a phase portrait for the dynamic system \eqref{eq:dyn_sys_cap_diff}. The points $(s^\pm, c^\pm)$ are critical for this dynamic system due to \eqref{eq:RH-1}, and we can check if there is a trajectory connecting the corresponding critical points. But even just analyzing the geometric meaning of the Rankine--Hugoniot conditions \eqref{eq:RH-1}, we derive a lot of restrictions on admissible shock parameters.

\begin{proposition}\label{prop:inadmissible_shocks}
The following restrictions on admissibility are evident from the properties (F1)--(F4), (A1)--(A3), the Rankine--Hugoniot conditions \eqref{eq:RH-1} and the analysis of the sign of the right-hand side of \eqref{eq:dyn_sys_cap_diff}:
\begin{itemize}
    \item Admissible shock velocity $v$ is bounded and strictly positive: $0 < v < \|f\|_{\mathcal C^1}$.
    \item Shocks with $s^- = 0$ cannot be admissible.
    \item Shocks with $s^+ = s^-$ cannot be admissible.
    \item Shocks with $c^+ > c^-$ cannot be admissible.
    \item If $s^+ = 0$ then $c^+=c^-$.
\end{itemize}
\end{proposition}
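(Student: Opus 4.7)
The plan is to derive all five bullets from two ingredients: the critical-point equations
\[
f(s^*,c^*)=v(s^*+d_1),\qquad a(c^*)=d_1 c^*-d_2,
\]
satisfied by every travelling-wave endpoint $(s^*,c^*)$, together with the signs of the various quantities dictated by (F1)--(F4) and (A1)--(A3). A preliminary observation is that $d_1\geqslant 0$: when $c^+\neq c^-$ this is $[a]/[c]>0$ by the strict monotonicity (A2), while when $c^+=c^-$ the common value of $c$ and the first critical-point identity force $d_1$ non-negative.

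For $c^+>c^-$ I would set $g(c):=d_1 c-d_2-a(c)$, so that $c_\xi=v\,g(c)$; by (A3), $g''=-a''>0$, so $g$ is strictly convex, and $g(c^-)=g(c^+)=0$ forces $g<0$ strictly between the two roots. Together with the positivity of $v$ shown below, this means $c_\xi<0$ along the orbit, contradicting $c^+>c^-$. For $s^+=s^-$, the first Rankine--Hugoniot identity reduces to $[f]=0$, hence $f(s^*,c^+)=f(s^*,c^-)$, and the strict monotonicity of $f$ in $c$ from (F4) (which applies for $s^*\in(0,1)$, the remaining admissible range once the other bullets are in hand) forces $c^+=c^-$, so the jump is trivial. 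For $s^+=0$, the critical-point identity gives $0=f(0,c^+)=v\,d_1$, and $v>0$ yields $d_1=0$, which by (A2) requires $c^+=c^-$. The same computation at $s^-=0$ gives $d_1=0$ and $c^+=c^-$; the system then decouples and the Jacobian of $s_\xi=f(s,c^-)-vs$ at $s=0$ has eigenvalue $f_s(0,c^-)-v=-v<0$ by (F2), making $(0,c^-)$ a stable node, so no orbit can leave it as $\xi$ increases, ruling out $s^-=0$.

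For the velocity, non-negativity is immediate from $v(s^\pm+d_1)=f(s^\pm,c^\pm)\geqslant 0$ and $s^\pm+d_1>0$ (a consequence of the bullets above). The degenerate case $v=0$ would force $f(s^\pm,c^\pm)=0$, hence $s^\pm\in\{0,1\}$ by (F2), together with $c^+=c^-$ from $c_\xi\equiv 0$; the only non-trivial configuration left is $s^-=0$, $s^+=1$, already excluded. The upper bound follows from $v=[f]/[s]$ plus the mean value theorem when $c^+=c^-$, and from $v=f(s^\pm,c^\pm)/(s^\pm+d_1)$ together with $\|f\|_\infty\leqslant 1$ and a lower bound on $s^\pm+d_1$ when $c^+\neq c^-$.

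The main obstacle I expect is making the upper bound on $v$ in the $c^+\neq c^-$ case genuinely of the form $v<\|f\|_{\mathcal C^1}$: the quick estimate $v\leqslant 1/(s^\pm+d_1)$ is only a qualitative control, and the cleanest path is probably to read the stated bound as an inequality in terms of the full $\mathcal C^1$ data of the problem (including $\|a_c\|_\infty$) rather than $\|f\|_{\mathcal C^1}$ alone. All other bullets reduce to one-line sign arguments in \eqref{eq:dyn_sys_cap_diff} and should present no serious difficulty.
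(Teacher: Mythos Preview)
Your approach is exactly what the paper intends by calling these facts ``evident''---the paper gives no proof beyond the statement itself, so you are correctly writing out the sign analysis it points to. Two remarks will tighten it.

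First, the circularity you set up (bullets 2, 4, 5 invoke $v>0$, while your argument for $v>0$ invokes ``the bullets above'') is easily dissolved by establishing $v>0$ first: if $c^+\neq c^-$ then $d_1=[a]/[c]>0$ by (A2), so $v(s^-+d_1)=f(s^-,c^-)\geqslant 0$ with $s^-+d_1\geqslant d_1>0$ yields $v\geqslant 0$, and $v=0$ forces $c_\xi\equiv 0$, contradicting $c^+\neq c^-$; if $c^+=c^-$ and $s^+\neq s^-$ then $v=[f]/[s]>0$ directly from (F2). Your side claim that $d_1\geqslant 0$ when $c^+=c^-$ is neither correct in general nor needed. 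Also, $f=0$ forces $s=0$, not $s\in\{0,1\}$, since $f(1,c)=1$.

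Second, your stated ``main obstacle''---the upper bound $v<\|f\|_{\mathcal C^1}$ when $c^+\neq c^-$---is not an obstacle. Since $d_1>0$ and $f(0,c^-)=0$, the mean value theorem gives, for $s^->0$,
\[
v=\frac{f(s^-,c^-)}{s^-+d_1}<\frac{f(s^-,c^-)-f(0,c^-)}{s^--0}=f_s(\xi,c^-)\leqslant\|f\|_{\mathcal C^1}
\]
for some $\xi\in(0,s^-)$; if $s^-=0$ the same identity gives $v\,d_1=0$, hence $v=0$, already excluded. No recourse to $\|a_c\|_\infty$ is required.
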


\begin{lemma}\label{lemma:Lax_for_small_s}
There exists $s_* \in (0,1)$ such that when $s^- < s_*$, we have
\begin{equation}\label{eq:Lax_for_small_s}
f_s(s^-, c^-) > v
\end{equation}
for all admissible shock parameters.
\end{lemma}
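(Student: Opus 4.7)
The plan is to split by whether $c^+ = c^-$ or $c^+ < c^-$ (the case $c^+ > c^-$ is ruled out by Proposition \ref{prop:inadmissible_shocks}) and in each case to exploit strict convexity of $f(\cdot, c^-)$ near the origin. We choose $s_* \in (0,1)$ small enough that, uniformly in $c \in [0, 1]$:
(i) $s_* < s^I(c)$, so that $f(\cdot, c)$ is strictly convex on $[0, s_*]$ with $f(0, c) = 0$; and
(ii) for every $s_0 \in (0, s_*]$, the tangent line $L(s) = f(s_0, c) + f_s(s_0, c)(s - s_0)$ lies strictly below the graph of $f(\cdot, c)$ on $(s_0, 1]$.
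Property (i) follows from continuity of $s^I$ on $[0, 1]$ (as the unique zero of $f_{ss}(\cdot, c)$ in $(0, 1)$) together with $s^I(c) > 0$. Property (ii) is obtained from $L(1) = f(s_0, c) + f_s(s_0, c)(1 - s_0) \to 0 < 1$ uniformly in $c$ as $s_0 \to 0$ (by (F1), (F2)), combined with strict convexity on $[s_0, s^I(c)]$ and a concavity argument on $[s^I(c), 1]$ where $L - f$ is concave with negative values at both endpoints.

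For the case $c^+ < c^-$, the boundary conditions accompanying \eqref{eq:dyn_sys_cap_diff} give the key identity $f(s^-, c^-) = v(s^- + d_1)$ with $d_1 = (a(c^-) - a(c^+))/(c^- - c^+) > 0$ by (A2). The target $f_s(s^-, c^-) > v$ is then equivalent to $f_s(s^-, c^-)(s^- + d_1) > f(s^-, c^-)$. Strict convexity of $f(\cdot, c^-)$ on $[0, s^-]$ together with $f(0, c^-) = 0$ yields $f_s(s^-, c^-) s^- > f(s^-, c^-)$, since the tangent at $s^-$ evaluated at $s = 0$ lies strictly below the graph; adding $f_s(s^-, c^-) d_1 > 0$, which holds by (F2) and $d_1 > 0$, concludes this case.

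For the case $c^+ = c^-$, the dynamic system \eqref{eq:dyn_sys_cap_diff} reduces on the invariant line $c \equiv c^-$ to the scalar ODE $s_\xi = f(s, c^-) - f(s^-, c^-) - v(s - s^-)$, so the existence of the required heteroclinic connection is equivalent to the scalar Oleinik E-condition on $f(\cdot, c^-)$. If $s^+ < s^-$, both endpoints lie in the strict convexity region by (i), and the chord slope satisfies $v < f_s(s^-, c^-)$ directly. If $s^+ > s^-$, this sub-case is vacuous: setting $\sigma(s) := (f(s, c^-) - f(s^-, c^-))/(s - s^-)$, the E-condition forces $v = \sigma(s^+) \leq \lim_{s \to s^-} \sigma(s) = f_s(s^-, c^-)$, but property (ii) yields $\sigma(s) > f_s(s^-, c^-)$ for every $s \in (s^-, 1]$, a contradiction.

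The main technical obstacle is verifying property (ii) on the concave portion $[s^I(c), 1]$ uniformly in $c$: the convex portion is immediate, but on the concave portion one needs simultaneous control of $L(s^I(c)) < f(s^I(c))$ and $L(1) < 1$ and then a concavity argument to conclude $L < f$ throughout $[s^I(c), 1]$. This depends on continuity of $s^I(c)$ and on the uniform smallness of $L(1) - 1$ as $s_0 \to 0$, both of which rest on elementary compactness/continuity arguments from (F1)--(F3).
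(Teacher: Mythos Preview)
Your proof is correct and follows essentially the same two-case split as the paper (the $c$-shock case via the Rankine--Hugoniot identity $f(s^-,c^-)=v(s^-+d_1)$ with $d_1>0$ and convexity near the origin; the $s$-shock case via the Oleinik condition together with a convex-hull/tangent-line obstruction ruling out $s^+>s^-$), though your algebraic handling of the $c$-shock case is somewhat more direct than the paper's geometric intersection argument. One slip to fix: on $[s^I(c),1]$ the function $L-f$ is \emph{convex} (equivalently $f-L$ is concave), not concave as you wrote; with this correction the ``negative at both endpoints $\Rightarrow$ negative throughout'' step is the standard fact that a convex function attains its maximum at an endpoint, and your argument goes through.
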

\begin{proof}
If $c^- \neq c^+$, we rewrite \eqref{eq:RH-1} in the following form:
\[
    v = \dfrac{[f(s,c)]}{[s]} = \dfrac{f(s^\pm,c^\pm)}{s^\pm + h}, \qquad h = \dfrac{[a(c)]}{[c]},
\]
therefore, points $(-h, 0)$, $(s^+, f(s^+,c^+))$, $(s^-, f(s^-,c^-))$ are collinear and lie on the line $l(s)=v(s+h)$. When $0 \leqslant v \leqslant (1+h)^{-1}$, there is a unique point of intersection of $l(s)$ and $f(s, c^-)$ inside the interval $(0,1)$ due to (F1)--(F3), which we denote $(s^c(c^-, v), f(s^c(c^-, v), c^-))$, and at this point of intersection we have 
\[
f_s(s^c(c^-, v), c^-) > v.
\]
\begin{figure}[htbp]
    \centering
    \includegraphics[width=0.45\textwidth]{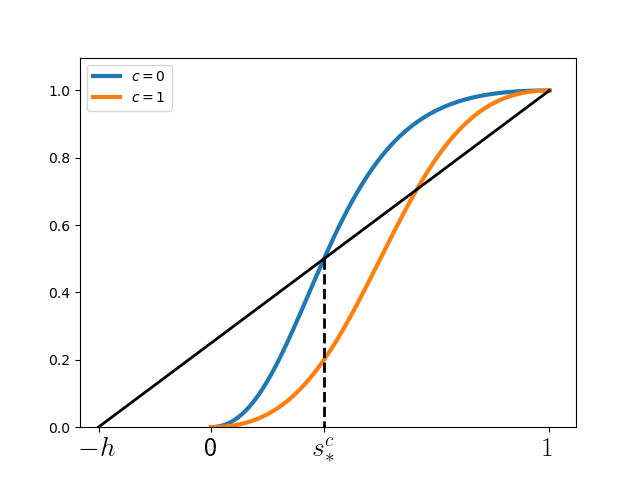}
    \includegraphics[width=0.45\textwidth]{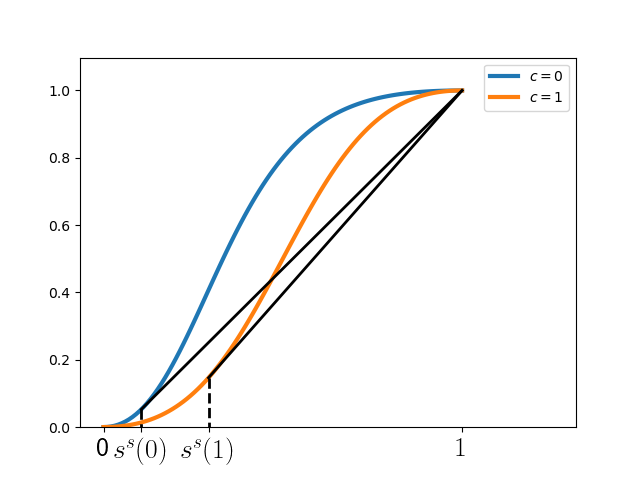}\\
        (a)\qquad\quad\hfil\qquad \qquad
    (b)\hfil
    \caption{The construction of (a) estimate $s_*^c$, (b) function $s^s(c)$. }
    \label{fig:sc_ss_construction}
\end{figure}
Therefore, \eqref{eq:Lax_for_small_s} holds for admissible shocks with $c^- \neq c^+$ for any $s^- < s_*^c$, where
\[
s_*^c = \min\limits_{c\in[0,1]} s^c(c, (1+h)^{-1}).
\]
It is clear that $s^c(c, v)$ is monotone in $c$, therefore
\[
s_*^c =
\begin{cases}
s^c(0, (1+h)^{-1}), &\text{ in the case (F4.1),}\\
s^c(1, (1+h)^{-1}), &\text{ in the case (F4.2),}
\end{cases}
\]
therefore it is obviously separated from $0$ (see Fig.~\ref{fig:sc_ss_construction}a).

If $c^-=c^+=c$, we look at the system \eqref{eq:dyn_sys_cap_diff} and note that it simplifies into one equation. We give a more detailed analysis of this case in Sect.~\ref{sec:sec2-s-shock-admissibility}. For this proof we only need to note that for $s^+ > s^-$ due to \eqref{Oleinik_admissibility} we need the graph of $f(\cdot, c)$ to be above the chord connecting $(s^-, f(s^-, c))$ and $(s^+, f(s^+, c))$. Therefore, denoting by $s^s(c)$ the lower end of the straight portion of the convex hull constructed below the graph of $f(\cdot, c)$ (the maximum of the graphs that are convex and below $f(\cdot, c)$, see Fig.~\ref{fig:sc_ss_construction}b) 
and by
\[
s_*^s = \min\limits_{c\in[0,1]} s^s(c),
\]
we conclude that for admissible shocks with $c^-=c^+$ for any $s^- < s_*^s$ we always have $s^+ < s^-$. Thus, \eqref{eq:Lax_for_small_s} holds due to $f$ being convex at $s^-$. It is possible to construct $f(s,c)$ in such a way that $s^s(c)$ is not monotone in $c$, but by construction it is a positive continuous function on $[0,1]$ (see Fig.~\ref{fig:sc_ss_construction}b), so it admits a minimum separated from zero.

Finally, denoting $s_* = \min\{ s_*^s, s_*^c \} > 0$, we complete the proof.
\end{proof}

\subsection{Nullcline configuration classification}
\label{sec:sec2-nullclines}

In this section we provide the results of a simplified version of the analysis done in \cite[Sect.~4.2]{Bahetal}. In the domain $\{ s\in(0,1); c\in(c^+, c^-) \}$ we draw nullclines $f(s, c) - v (s + d_1) = 0$ as black lines (see 
Fig.~\ref{fig:phase_portrait_all}).

Denote by $s_{1,2}^\pm(v)$ the solutions of $f(s, c^\pm) - v (s + d_1) = 0$ on the $c^\pm$-boundaries of $\Omega$. Since $f$ is S-shaped due to (F3), we know that for $d_1 > 0$ there exist at most $2$ solutions on each side. We assume $s_1^\pm(v) < s_2^\pm(v)$ when both solutions exist. When only one solution exists on the side, we denote it $s_1^+(v)$ for the $c^+$ side or $s_1^-(v)$ for the $c^-$ side. Finally, we denote by $u_{1,2}^\pm(v) = (s_{1,2}^\pm(v), c^\pm)$ the respective points on the boundary of the domain.

Since (F4) only allows functions $f$ that depend monotonically on $c$, there are far fewer types of nullcline configurations for the dynamical system \eqref{eq:dyn_sys_cap_diff} than there were in \cite{Bahetal}. They are as follows (see Fig.~\ref{fig:phase_portrait_all}):
\begin{itemize}
    \item Type 0. One nullcline (black curve) connecting $u^+_1(v)$ to $u^-_1(v)$.
    \item Type I. Two non-intersecting monotone curves connecting the critical points: one from $u^+_1(v)$ to $u^-_1(v)$ and another from $u^+_2(v)$ to $u^-_2(v)$.
    \item Type II. One black curve separated from one side and connecting critical points on the other side.
\end{itemize}

\begin{figure}[H]
    \centering
    \begin{minipage}{0.16\textwidth}
    \includegraphics[width=\linewidth]
    {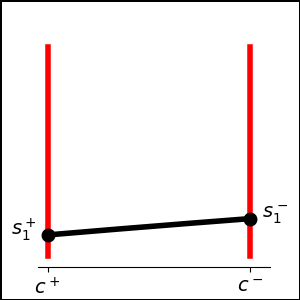}
    \captionof*{figure}{Type 0}
    \end{minipage}
    \hfil
    \begin{minipage}{0.16\textwidth}
    \includegraphics[width=\linewidth]{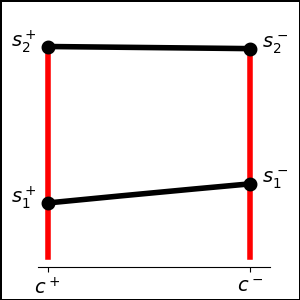}
    \captionof*{figure}{Type 0-I}
    \end{minipage}
    \hfil
    \begin{minipage}{0.16\textwidth}
    \includegraphics[width=\linewidth]{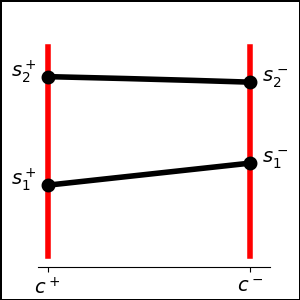}
    \captionof*{figure}{Type I}
    \end{minipage}
    \hfil
    \begin{minipage}{0.16\textwidth}
    \includegraphics[width=\linewidth]{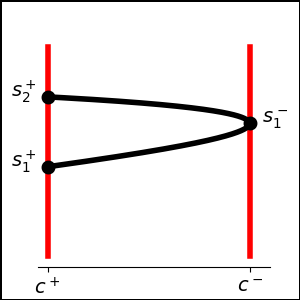}
    \captionof*{figure}{Type I-II}
    \end{minipage}
    \hfil
    \begin{minipage}{0.16\textwidth}
    \includegraphics[width=\linewidth]{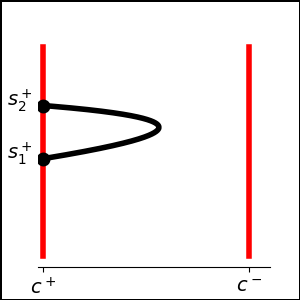}
    \captionof*{figure}{Type II}
    \end{minipage}
    \caption{Three wide classes and two intermediate types of nullcline configurations for the case (F4.1) in the order of increasing $v$.}
    \label{fig:phase_portrait_all}
\end{figure}

In addition, there are singular (occurring for a single value of $v$) intermediate types of nullcline configurations that are essentially border cases for the wide classes described above (see Fig.~\ref{fig:phase_portrait_all}): 
\begin{itemize}
    \item Type 0-I. Similar to Type I, but the upper black curve coincides with the border $s=1$.
    \item Type I-II. Two critical points on one side converge into one. Thus $c^-$ (in the case (F4.1)) or $c^+$ (in the case (F4.2)) have only one critical point on the corresponding boundary, i.e. only $s^-_1(v)$ or $s^+_1(v)$ respectively. 
\end{itemize}

\begin{figure}[htbp]
    \centering
    \includegraphics[width=0.55\textwidth]{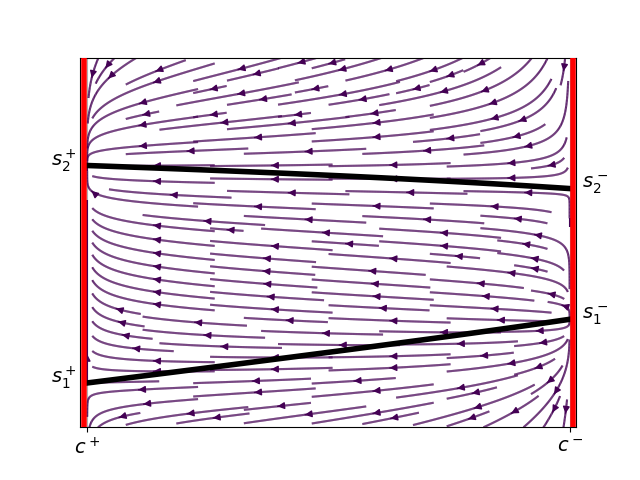}
    \caption{An example of a Type I phase portrait constructed from the system \eqref{eq:dyn_sys_cap_diff}}
    \label{fig:Type-I-phase-portrait}
\end{figure}

Based on this classification it is easy to trace the possible and impossible trajectories (see e.g. Fig.~\ref{fig:Type-I-phase-portrait} for Type I trajectories) and name admissible and inadmissible $c$-shocks (shocks with $c^+\neq c^-$).
\begin{proposition}
\label{prop:c-shock-admissibility}
The local vanishing viscosity condition (W4) imposes the following restrictions on the $c$-shocks:
\begin{itemize}
    \item As already noted in Proposition \ref{prop:inadmissible_shocks}, no trajectories could exist from $c^-$ to $c^+$ if $c^- < c^+$, therefore such shocks are inadmissible.
    \item Travelling waves connecting $u_2^-$ to $u_1^+$ in configurations of Type I and Type 0-I are inadmissible, since there is no trajectory connecting those critical points.
    \item All other travelling waves between critical points $u^-_{1,2}$ and $u^+_{1,2}$ are admissible.
\end{itemize}
\end{proposition}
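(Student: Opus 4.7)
My plan is a direct phase-plane analysis of the planar autonomous system \eqref{eq:dyn_sys_cap_diff} inside the strip $\Omega = [0,1]\times[c^+, c^-]$. The first bullet merely restates Proposition~\ref{prop:inadmissible_shocks}: writing the second equation as $c_\xi = v g(c)$ with $g(c) = d_1 c - d_2 - a(c)$, one notes that $g$ is convex by (A3) and vanishes at both $c^\pm$, hence is non-positive in between; so $c_\xi$ cannot change sign, forcing $c^- \ge c^+$ along any admissible trajectory. For the remaining two bullets I first classify the four candidate critical points $u_{1,2}^\pm$ by linearization. The Jacobian of \eqref{eq:dyn_sys_cap_diff} is upper-triangular,
\[
J(s,c) = \begin{pmatrix} f_s(s,c) - v & f_c(s,c) \\ 0 & v\bigl(d_1 - a_c(c)\bigr) \end{pmatrix},
\]
with eigenvalues $\lambda_1 = f_s(s,c) - v$ and $\lambda_2 = v\bigl(d_1 - a_c(c)\bigr)$. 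Geometrically, the line $l(s) = v(s+d_1)$ crosses $f(\cdot, c^\pm)$ from below at $s_1^\pm$ and from above at $s_2^\pm$, so $\lambda_1 < 0$ at $u_1^\pm$ and $\lambda_1 > 0$ at $u_2^\pm$. The mean value theorem together with (A3) places $d_1$ strictly between $a_c(c^-)$ and $a_c(c^+)$, giving $\lambda_2(c^-) > 0$ and $\lambda_2(c^+) < 0$. Consequently $u_1^-$ and $u_2^+$ are saddles, $u_2^-$ is an unstable node, and $u_1^+$ is a stable node.

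The phase-plane decomposition does the rest. Since $c_\xi < 0$ strictly in the open interior of $\Omega$, the system admits no closed orbits, every interior trajectory is a $c$-monotone curve, and its $\alpha$- and $\omega$-limits are critical points on $\{c = c^-\}$ and $\{c = c^+\}$ respectively. The $s$-nullclines decompose $\Omega$ into regions of constant $\sign(s_\xi)$. In Type I and Type 0-I the region $R_0 = \{s < s_1(c)\}$ has $s_\xi < 0$, so it is forward-invariant with $u_1^+$ as its only sink, while between the nullclines $R_1 = \{s_1(c) < s < s_2(c)\}$ has $s_\xi > 0$, which pushes trajectories away from the lower nullcline rather than across it. A trajectory emanating from $u_2^-$ initially enters $R_1$ or $R_2 = \{s > s_2(c)\}$ (both on the opposite side of the lower nullcline from $u_1^+$) and can never cross that nullcline, ruling out $u_2^- \to u_1^+$. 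Conversely, tracing the two branches of the unstable manifold of $u_1^-$ and the local unstable manifold of the unstable node $u_2^-$, and identifying their $\omega$-limits by the dichotomy above, produces the admissible heteroclinic orbits $u_1^- \to u_1^+$, $u_1^- \to u_2^+$, and $u_2^- \to u_2^+$.

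The hardest part will be the saddle-to-saddle connection $u_1^- \to u_2^+$, which is non-generic in arbitrary planar systems but is forced here because both critical points lie on the common Rankine--Hugoniot line $l(s)$, the strip $R_1$ between the nullclines is forward-invariant, and the strict monotonicity $c_\xi < 0$ leaves $u_2^+$ as the unique available $\omega$-limit for the interior branch of the unstable manifold of $u_1^-$. The intermediate configurations Type 0-I and Type I-II require separate treatment, since either the upper nullcline coincides with the boundary $s=1$ or two critical points on one side coalesce into a degenerate node, but in each case the same invariant-region argument, supplemented with a careful look at the degenerate linearization via center-manifold or blow-up, carries through to give the stated admissibility pattern.
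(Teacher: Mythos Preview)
Your overall strategy --- linearize at the critical points, classify them as saddles or nodes, then run an invariant-region argument in the strip --- is exactly the approach the paper has in mind (it defers the details to the companion analysis in \cite{Bahetal}), and it is the right one. However, you have a sign error that swaps the saddles and nodes, and this undermines the narrative of the rest of your argument.

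Concretely: near $s=0$ one has $s_\xi = f(0,c) - v d_1 = -v d_1 < 0$, so $s_\xi$ changes from negative to positive at $s_1^\pm$, giving $\lambda_1 = f_s - v > 0$ there (not $<0$), and by the same reasoning $\lambda_1 < 0$ at $s_2^\pm$. Equivalently, the line $l(s)=v(s+d_1)$ lies \emph{above} the graph of $f(\cdot,c)$ near $s=0$, so it crosses from above at $s_1$, not from below. With the correct signs the classification is reversed: $u_1^-$ is the unstable node, $u_2^+$ is the stable node, and $u_2^-$, $u_1^+$ are the saddles. This means the connection you call ``the hardest part'', $u_1^- \to u_2^+$, is in fact the easy generic node-to-node orbit, while the forbidden $u_2^- \to u_1^+$ is the non-generic saddle-to-saddle connection. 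Your invariant-region mechanism (trajectories cannot cross the lower $s$-nullcline in the wrong direction) is still the right tool to exclude $u_2^- \to u_1^+$, but the surrounding justifications --- invoking ``the unstable node $u_2^-$'' with a continuum of outgoing orbits, or calling $u_1^+$ a sink for $R_0$ --- are based on the wrong local pictures and need to be rewritten. Once the labels are corrected the argument goes through along the lines you sketch.
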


\section{Lagrange coordinate transformation}
\label{sec:Lagrange}

\subsection{Zero flow area}
\label{sec:zero-flow}
We would like to utilize the Lagrange coordinates, i.e. the coordinates tied to the flow, but to use them rigorously, we first need to establish the area, where there is no flow. Due to (F1) and (F2), the flow function is zero only when $s=0$. In this section we prove that under the conditions (S1)--(S3) it is a connected area bounded by the ray $(x^0, +\infty)$ on one side and by discontinuities of the function $s$ on the other.
\begin{lemma}\label{lemma_zero_below}
Let $(x_*, t_*)$, $x_*>0$, $t_*>0$ be a zero of the solution $s$, i.e. $s(x_*, t_*) = 0$ in the smoothness area or one of $s(x_*\pm0, t_*) = 0$ on the shock. Then $s(x_*, t) = 0$ for all $0 \leqslant t < t_*$.
\end{lemma}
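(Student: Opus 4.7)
The plan is to trace the backward characteristic of the $s$-equation starting from $(x_*,t_*)$ and to show that it coincides with the vertical ray $\{x_*\}\times[0,t_*]$, along which $s$ vanishes identically. The geometric heart of the argument is a double degeneracy at $s=0$: by (F2) we have $f_s(0,c)\equiv 0$, and since $f(0,c)\equiv 0$ in (F1) differentiating in $c$ also yields $f_c(0,c)\equiv 0$, so Taylor expansion gives the pointwise bound $|f_c(s,c)|\le \|f_{cs}\|_\infty\, s$ near the zero level.

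First I would work inside a smoothness region. Let $x(\tau)$ solve $\dot{x}=f_s(s(x(\tau),\tau),c(x(\tau),\tau))$ with $x(t_*)=x_*$ and set $\phi(\tau)=s(x(\tau),\tau)$. Differentiating along the characteristic and using (W3) gives $\phi'(\tau)=-f_c(\phi(\tau),c(x(\tau),\tau))\,c_x(x(\tau),\tau)$. Combining the bound $|f_c|\le M\phi$ with assumption (W2), which yields $|c_x|\le C_K$ on any compact $K$ away from the axes, I obtain the linear estimate $|\phi'|\le MC_K\,\phi$. Since $\phi(t_*)=0$, a Gronwall argument run backward in $\tau$ forces $\phi\equiv 0$ on the characteristic so long as it stays in a smoothness region away from the axes. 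But then $f_s$ vanishes along the characteristic, the ODE reduces to $\dot{x}\equiv 0$, and the characteristic is literally the vertical ray $\{x_*\}$, on which $s\equiv 0$.

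Next I would propagate this conclusion down to $t=0$. Suppose the vertical ray first meets a discontinuity curve $\Gamma=\{x=\gamma(t)\}$ at some $\tau_1\in(0,t_*)$, so $\gamma(\tau_1)=x_*$. By Proposition~\ref{prop:inadmissible_shocks} the shock velocity $v=\gamma'(\tau_1)$ is strictly positive, hence $\Gamma$ crosses the vertical ray transversally and $\gamma(t)>x_*$ for $t\in(\tau_1,\tau_1+\varepsilon)$; i.e.\ $(x_*,t)$ lies just to the left of $\Gamma$ for such $t$. Taking $t\downarrow\tau_1$ along the ray (where $s\equiv 0$) therefore yields $s^-(\gamma(\tau_1),\tau_1)=0$, contradicting the exclusion of $s^-=0$ shocks in Proposition~\ref{prop:inadmissible_shocks}. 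So no such $\tau_1$ can exist and the vertical ray extends all the way down to $t=0$.

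Finally, the remaining case of the hypothesis, in which $(x_*,t_*)$ lies on a shock with $s^+(x_*+0,t_*)=0$ (the case $s^-=0$ is already ruled out by Proposition~\ref{prop:inadmissible_shocks}), is handled identically by running the characteristic analysis in the smoothness region immediately to the right of that shock: since $v>0$ places the shock at $\gamma(t)<x_*$ just below $t_*$, the segment $\{x_*\}\times(t_*-\eta,t_*)$ lies inside that right-hand smoothness region, $s$ tends to $s^+=0$ along it as $t\uparrow t_*$, and the Gronwall step above applies verbatim with $\phi(t_*^-)=0$. I expect the shock transition to be the main technical obstacle; it is resolved purely through the strict positivity of admissible shock velocities and the prohibition of $s^-=0$ shocks in Proposition~\ref{prop:inadmissible_shocks}, together with the degeneracy bound $|f_c(s,c)|\le Ms$ and the control (W2) on $|c_x|$.
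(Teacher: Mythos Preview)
Your Case-1 argument (backward characteristic in a smoothness region, Gronwall via $|f_c|\le Ms$ and (W2), hence the characteristic is vertical with $s\equiv 0$) is exactly the paper's, and your $s^-=0$ contradiction at an interior shock point is also correct. The genuine gap is at shock \emph{endpoints}. Your transversality step uses $v=\gamma'(\tau_1)>0$ at the actual point where the vertical ray meets $\Gamma$, but the ray can instead reach a point of $\overline{\Gamma}_i\setminus\Gamma_i$ where no such pointwise velocity exists. In fact the paper's Case~1 shows that the vertical characteristic can \emph{only} leave the smoothness region through $\overline{\Gamma}_i\setminus\Gamma_i$ (or the $x$-axis), never through $\Gamma_i$ itself---precisely because of the $s^-=0$ contradiction you state. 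The paper isolates the set $E_r$ of right endpoints approached with $\gamma'_i\to 0$; at such a point ``$\gamma(t)>x_*$ just above $\tau_1$'' is unavailable, and your argument stalls. The paper's Case~3 rules out $E_r$ by a different mechanism: one builds characteristics from nearby points that must eventually cross the slowly-approaching shock from above, forcing $f_s(s^-,c^-)\le\gamma_t$ with $s^-$ small, and this contradicts Lemma~\ref{lemma:Lax_for_small_s}. Without that ingredient the proof is incomplete.

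A secondary issue arises in your step~3: even at an actual shock point with $v>0$, you assume the segment $\{x_*\}\times(t_*-\eta,t_*)$ lies in a single smoothness region. Several shocks may converge at $(x_*,t_*)$, so one must first carve out a shock-free neighbourhood below $(x_*,t_*)$. The paper does this by showing all shocks in a left half-ball have speed bounded below by some $\delta>0$ (else the point would lie in $E_r$, already excluded), hence cannot enter an angular wedge $B_\varepsilon^{\wideangleup}$; only then is the characteristic---or rather a limiting family from $(r,t_*)$ with $r\downarrow x_*$---run inside that wedge.
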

\begin{proof}

Denote by $\Omega_s$ the area, where $s$ and $c$ are $\mathcal{C}^1$-smooth, and by $\{ \Gamma_i \}$ the set of all shocks defined by curves $\gamma^i(t)$. By $E_r$ we denote the set of zero points that are each a limit point at the right end of $\gamma_i$ but are not themselves the point of $\gamma_i$:
\[
(x, t) \in E_r \Longleftrightarrow \exists i : x = \lim\limits_{\tau\to t-0} \gamma^i(\tau), \text{ but } \lim\limits_{\tau\to t-0} \gamma^i_t(\tau) = 0, \text{ so } (x,t)\notin \Gamma_i.
\]
Clearly, 
\[
\partial \Omega_s = \Gamma_x \cup \Gamma_t \cup \Big( \bigcup\limits_i \overline{\Gamma}_i  \Big),
\] 
where $\Gamma_x = \{(x,t):t=0, x\geqslant 0\}$ is the $x$ axis, $\Gamma_t = \{(x,t): x=0, t\geqslant 0 \}$ is the $t$ axis and $\overline{\Gamma}_i$ is the closure of the curve $\Gamma_i$.
Since $x_* > 0, t_* > 0$, there are 3 possible cases for $(x_*, t_*)$:
\begin{itemize}
    \item[Case 1.] $(x_*, t_*) \in \inter \Omega_s$ is in the interior of the smoothness area.
    \item[Case 2.] $(x_*, t_*) \in \overline{\Gamma}_i\setminus E_r$ is on the boundary $\partial\Omega_s$, but not in $E_r$.
    \item[Case 3.] $(x_*, t_*) \in E_r$.
\end{itemize}
We will resolve each case separately in order.

\underline{Case 1.} In the compact neighbourhood $K\subset \inter \Omega_s$ of $(x_*, t_*)\in K$ the derivatives of $s$ and $c$ are bounded. Therefore, there exists a unique solution of the characteristic equation
\[
x_t(t) = f_s(s(x(t),t), c(x(t),t)), \quad x(t_*) = x_*,
\]
since the right-hand side of this equation is locally Lipschitz, and thus, the Picard--Lindel\"{o}f theorem applies. Along this characteristic due to the first equation of the system \eqref{eq:main_system_chem_flood} we have
\begin{align*}
\dfrac{d}{dt} s(x(t), t) &= s_t(x(t), t) + x_t(t) s_x(x(t), t) \\
{} & = s_t(x(t), t) + f_s(s(x(t), t), c(x(t), t)) s_x(x(t), t) \\
{} & = -f_c(s(x(t), t), c(x(t), t)) c_x(x(t), t).
\end{align*}
Due to (F1) we have an estimate
\[
|f_c(s,c)| \leqslant Ms,
\]
where $M = \|f\|_{\mathcal C^2}$. Therefore, along the characteristic we can estimate
\[
-M|c_x| \leqslant \dfrac{\frac{d}{dt}s}{s} \leqslant M|c_x|.
\]
Integrating this estimate over $(t_1, t_2)$ we obtain
\begin{equation}\label{eq:characteristic_s_estimate}
s(x(t_2), t_2) \mathrm{e}^{-MC_K(t_2-t_1)} \leqslant s(x(t_1), t_1) \leqslant s(x(t_2), t_2) \mathrm{e}^{MC_K(t_2-t_1)},
\end{equation}
where $C_K$ is given in (W2) in Definition \ref{def:solution}. Substituting $t_2 = t_*$ we obtain $s(x(t_1), t_1) = 0$ for all points along the characteristic and therefore $x_t = f_s(s,c) = 0$, thus the characteristic is a vertical line carrying the value $s=0$. We can extend the characteristic until we leave $\inter\Omega_s$, at which point we either reach $\Gamma_x$, which finishes the proof, or we reach $\overline{\Gamma}_i$ and switch to Case 2 or Case 3. Moreover, the characteristic cannot reach $\Gamma_i$, since $s^-=s(\gamma_i(t)-0, t)=0$ is not admissible due to Proposition \ref{prop:inadmissible_shocks}, so it can only reach $\overline{\Gamma}_i\setminus \Gamma_i$, thus remaining in $\Omega_s$.

\underline{Case 2.} Similarly, due to Proposition \ref{prop:inadmissible_shocks} it is impossible for $(x_*, t_*)\in \Gamma_i$ to have $s(x_*-0, t_*)=0$ above the shock. Therefore, we have $s(x_*+0, t_*)=0$. First, we choose $\varepsilon > 0$ in such a way that (see Fig.~\ref{fig:case2-illustration}):
\begin{itemize}
    \item Inside the half-ball $B^-_\varepsilon = B_\varepsilon(x_*, t_*)\cap \{(x,t): x\leqslant x_* \}$ for some $\delta>0$ all shocks have the velocity $v > \delta$ at every point.
    \item There are no points of $\partial \Omega_s$ in $$B_\varepsilon^{\wideangleup} = B_\varepsilon(x_*, t_*)\cap \{(x,t): x \geqslant x_* - (t_*-t)\delta, t \leqslant t_* \}\setminus \{(x_*, t_*)\}.$$
\end{itemize}
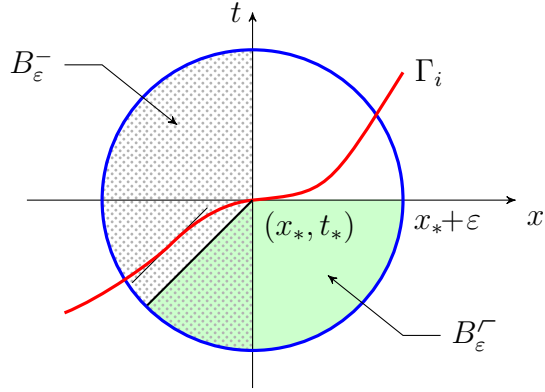
\begin{figure}
    \begin{minipage}{\linewidth}
    \begin{center}
    \begin{tikzpicture}[>=stealth', yscale=1, xscale=1]
\filldraw [green!20] (0, 0) -- (2, 0)  arc (0:-134.5:2) --cycle;
\draw [pattern=crosshatch dots, pattern color = black!30] (0, 2) arc (90:270:2) --cycle;
\draw[thin] (-1.1-0.2*2.5, -0.6 - 0.2*2.5) -- (-1.1+0.2*2.5, -0.6 + 0.2*2.5);
\draw[->, thin] (-3,0) -- (3.5,0)
node[below right] {$x$}; % Ox
\draw[->, thin] (0,-2.5) -- (0,2.5)
node[left] {$t$};
\draw[very thick, blue] (0,0) circle (2);
\node[below right] at (0,0) {$(x_*, t_*)$};
\node[below right] at (2,0) {$x_*\!\!+\!\varepsilon$};
\draw[very thick, red] (-2.5, -1.5) ..
controls (-2.5 +0.5, -1.5 + 0.2) and (-1.1-0.2, -0.6 - 0.2)  .. (-1.1, -0.6) ..
controls (-1.1+0.2, -0.6 + 0.2) and (-1*0.5, -0.1*0.5) .. (0, 0) ..
controls (1, 0.1) .. (2, 1.7) node[right, black] {$\Gamma_i$};
\draw[thick] (0, 0) -- (-0.2*7, -0.2*7);
\draw[->] (-2.5, 1.8) node[left] {$B^-_\varepsilon$} -- (-2, 1.8) -- (-1, 1);  
\draw[->] (2.5, -1.8) node[right] {$B^{\wideangleup}_\varepsilon$} -- (2, -1.8) -- (1, -1);
% \draw[very thick] (-2, -0.5) -- (0,0);
\end{tikzpicture}
    \end{center}
    \end{minipage}
    \caption{The neighbourhood of the point $(x_*, t_*)$, the set $B_\varepsilon^{\wideangleup}$ (light green area) and the set $B_\varepsilon^-$ (dotted area).}
    \label{fig:case2-illustration}
\end{figure}
Indeed, let's begin with the first requirement. Suppose there is no such $\varepsilon$. Then there exists a sequence of shock points $(x_k, t_k)$ approaching $(x_*, t_*)$ from the left with shock velocities $v_k \to 0$. Since there is a finite number of shocks nearby, we can select a subsequence belonging to just one shock $\Gamma$ given by $\gamma(t)$. Then 
\[
x_* = \lim\limits_{t\to t_*-0} \gamma(t), \text{ and } \lim\limits_{t\to t_*-0} \gamma_t(t) = 0,
\]
so $(x_*, t_*) \notin \Gamma$, since shocks with velocity $0$ cannot be admissible. Hence, $(x_*, t_*)\in E_r$, which is prohibited in Case 2. Therefore, it is possible to choose $\varepsilon>0$ to satisfy the first requirement. 

For the second requirement, we note that there is a finite number of shocks in $B_\varepsilon^{\wideangleup}$ and none of them approach $(x_*, t_*)$, since any shock approaching $(x_*, t_*)$ has the velocity $v > \delta$ to the left of it and $v < +\infty$ to the right, therefore it cannot go into $B_\varepsilon^{\wideangleup}$ due to its definition. Therefore, it is possible to choose $\varepsilon$ small enough to exclude all of the shocks and their limiting points from $B_\varepsilon^{\wideangleup}$.

Now, $B_\varepsilon^{\wideangleup} \subset \inter\Omega_s$, therefore it is possible to construct a characteristic from any point in it. We will look at characteristics from every point $(r, t_*)$ as $r\to x_*+0$: 
\[
x_t(t) = f_s(s(x(t),t), c(x(t),t)), \quad x(t_*) = r,
\]
Due to an estimate similar to \eqref{eq:characteristic_s_estimate} with $K = \overline{B}_\varepsilon^{\wideangleup}$ we conclude that $s$ on these characteristics uniformly tends to $0$ in $B_\varepsilon^{\wideangleup}$ as $r\to x_*+0$. Therefore, denoting $$R_r = \big[0, s(r,t_*)\mathrm{e}^{\varepsilon MC_{K}}\big]\times[0,1]$$ we conclude that for $(x(t),t)\in B_\varepsilon^{\wideangleup}$ on the characteristic we can estimate
\[
f_s(s(x(t),t),c(x(t),t)) < \max\limits_{(s,c)\in R_r} f_s(s , c) \to 0 \quad \text{ as } r \to x_*+0,
\]
so $x_t$ on these characteristics uniformly tends to zero. When $x$ is sufficiently close to $x_*$ we obtain an estimate $x_t < \delta/2$, so in a certain neighbourhood of $x_*$ the characteristics stay inside $B_\varepsilon^{\wideangleup}$ until they reach its circular boundary.  Thus, the characteristics uniformly approach the vertical line, bringing with them values of $s$ tending to $0$. Since we are inside $\Omega_s$, we conclude that $s(x_*, t)=0$ for all $0 < t_* - t < \varepsilon$. After that we can switch to Case 1.

\underline{Case 3.} Suppose $\Gamma$ is the shock given by the curve $\gamma(t)$, for which 
\[
x_* = \lim\limits_{t\to t_*-0} \gamma(t), \quad \lim\limits_{t\to t_*-0} \gamma_t(t) = 0.
\]
Due to the Ranikine-Hugoniot condition \eqref{eq:RH-1} and (F2) we know that when the velocity of the shock tends to $0$, and the values of $s$ on one side of the shock tend to $0$, the values of $s$ on both sides of the shock tend to $0$:
\[
\lim\limits_{t\to t_*-0} s(\gamma(t)+0, t) = \lim\limits_{t\to t_*-0} s(\gamma(t)-0, t) = 0.
\]
There is a finite number of shocks nearby, so we can choose $\varepsilon >0 $ in such a way that $\Gamma$ does not intersect other shocks inside $B_\varepsilon^-$. Denote by $(x_1, t_1) = \Gamma \cap \partial B_\varepsilon^-$ the point where $\Gamma$ leaves $B_\varepsilon^-$ and by
\[
\gamma^{\max}(t) := \max \big(\{ \gamma^i(t): \gamma^i(t) < \gamma(t) \} \cup \{ 0 \}\big), \quad t \in (t_1, t_*).
\]
\begin{figure}
    \begin{minipage}{\linewidth}
    \begin{center}
    \def\lowborder{-8}
\def\leftborder{-2}
\def\upperborder{3}
\def\rightborder{12}
\def\startx{9}
\def\starty{0}
\def\radius{7}
\def\smalleps{0.2}
\def\tOne{-6.494}
\begin{tikzpicture}[>=stealth', yscale=0.5, xscale=0.5]
\definecolor{mycolor}{rgb}{0.2,0.6,0.3}
	\begin{scope}[xshift=4pt, color=mycolor]
		\draw[in=-90, line width = 4pt, opacity = 0.6] (1-0.15, -6-0.1) parabola (4+0.1, -4+0.1) -- (0, -4+0.1) -- (0, -2.8-0.15) -- (3.5, -2.8-0.15) -- (4.5-0.1, -2-0.1) to (\startx, \starty);
	\end{scope}
\draw[thin,->] ({\leftborder}, 0) -- (\rightborder, 0) node[below right] {$x$};
\draw[thin,->] (0,\lowborder) -- (0,\upperborder) node[left] {$t$};
\draw[very thick] (1, -6) parabola (4, -4);
\draw[in=-90, very thick] (4.5, -2) to (\startx, \starty);
\draw[very thick] (3.5, -2.8) -- (6.5, -0.4);
\draw[very thick] (1.5, -1.7) arc (-80:-30:7);
\node[below left] at (0, 0) {$t_*$};
\node[below right] at (\startx, \starty) {$x_*$};
\draw[thick, blue] (\startx, \starty - \radius) arc (-90:-200:\radius);
\draw[thick, blue] (\startx, \starty - \radius) arc (-90:-70:\radius);
%\draw[in=-90, line width = 4pt, opacity = 0.5, color=green] (1, -6) parabola (4, -4) -- (0, -4) -- (0, -2.8) -- (3.5, -2.8) -- (4.5, -2) to (\startx, \starty);
% попробовать команду, которая убирает половину кривой  (не нешёл такой)
\draw[in=-90, out=20, very thick] (4.5, -7.5) to (\startx, \starty);
\draw[in=-90, out=20, very thick, color=red] (2, -7) to (\startx, \starty);
\node[right] at (7.5, -5.5) {$\gamma(t)$};
\node[above, color=red] at (6, -5) {$\widetilde{\gamma}(t)$};
\draw[thick, dashed] (0, \tOne) -- (6.45 , \tOne);
\node[left] at (0, \tOne) {$t_1$};
%\draw[in=-90, out=20, line width = 2pt, color=red] (1+\smalleps, -6) parabola (4+\smalleps, -4) -- (0, -4) -- (0, -2.8) -- (3.5+\smalleps, -2.8) -- (4.5+\smalleps, -2) to (\startx, \starty);

\end{tikzpicture}
    \end{center}
    \end{minipage}
    \caption{An illustration for the construction of $\widetilde{\gamma}(t)$. Black lines are shocks. Green highlight denotes the path of $\gamma^{\max}(t)$.}
    \label{fig:case3-illustration}
\end{figure}
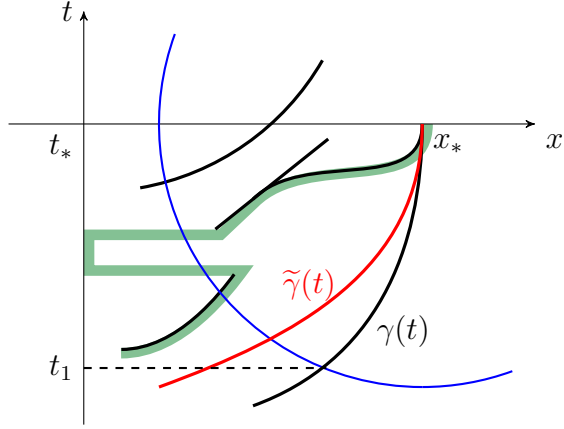
Between $\gamma(t)$ and $\gamma^{\max}(t)$ inside $B^-_\varepsilon$ we select an arbitrary curve $\widetilde{\gamma}(t)$ (see Fig.~\ref{fig:case3-illustration}) such that above $t_{1/2}=\frac{t_*+t_1}{2}$ we have
\begin{align*}
&(\widetilde{\gamma}(t), t) \in B^-_\varepsilon, &\quad& t\in(t_{1/2}, t_*),\\
&\gamma^{\max}(t) < \widetilde{\gamma}(t) < \gamma(t), &\quad& t\in(t_{1/2}, t_*), \\
& \lim\limits_{t\to t_*-0} \widetilde{\gamma}(t) = x_*.
\end{align*}
Note that by construction $\{(x,t): t_{1/2}<t<t_*, \widetilde{\gamma}(t) \leqslant x < \gamma(t) \} \subset \Omega_s$. Now we can construct a characteristic from every point $(\widetilde{\gamma}(t),t)$. Then as $t\to t_*-0$ the characteristics will uniformly tend to vertical line, thus after some point, all of the characteristics must intersect $\Gamma$. At the point of intersection we obtain a contradiction with Lemma \ref{lemma:Lax_for_small_s}  for all $t$, such that $s(\widetilde{\gamma}(t),t)\mathrm{e}^{\varepsilon MC_K} < s_*$, where $K=\overline{B}_\varepsilon^-$, and $s_*$ is given in Lemma \ref{lemma:Lax_for_small_s}. The characteristic with incline $f_s(s,c)$ intersects the shock while going above it, therefore at the point of intersection we have
\[
f_s(s(x-0, t),c(x-0,t)) \leqslant \gamma_t(t), \quad s(x-0, t) \leqslant s(\widetilde{\gamma}(t),t)\mathrm{e}^{\varepsilon MC_K} < s_*,
\]
which contradicts Lemma \ref{lemma:Lax_for_small_s}. Therefore, we conclude that Case 3 is impossible.
\end{proof}

\begin{remark}
We demonstrated in section \ref{sec:sec2-Hopf} above that the classical Rankine--Hugoniot condition \eqref{eq:RH-1} follows from local vanishing viscosity admissibility (W4). Therefore, similar to \cite[Lemma 2.2.1]{Serre1}, any solution $(s,c)$ of the system \eqref{eq:main_system_chem_flood}  (see Definition \ref{def:solution})   
with initial and boundary conditions \eqref{eq:Initial_boundary_problem} satisfies in $Q = \mathbb{R}_+\times \mathbb{R}_+$ the integral equations
\begin{equation}
\label{eq:weak-s-eq}
\iint\limits_Q s \psi_t + f(s,c)\psi_x \,dx\,dt + 
\int\limits_{\mathbb{R}_+} s_0^x(x) \psi(x, 0) \, dx +
\int\limits_{\mathbb{R}_+} f(s_0^t(t), c_0^t(t)) \psi(0, t) \, dt = 0
\end{equation}
and
\begin{align}
\label{eq:weak-c-eq}
\begin{split}
\iint\limits_Q (sc + a) \psi_t + cf(s,c)\psi_x \,dx\,dt &+ \int\limits_{\mathbb{R}_+} (s_0^x(x)c_0^x(x) + a(c_0^x(x)) \psi(x, 0) \, dx \\ 
& + \int\limits_{\mathbb{R}_+} c_0^t(t)f(s_0^t(t), c_0^t(t)) \psi(0, t) \, dt
 = 0,
\end{split}
\end{align}
where $\psi\in\mathcal D(Q)$ is an arbitrary test function. Here and below by $\mathcal D(\cdot)$ we denote the space of smooth functions with compact supports that are allowed to have non-zero values on the boundary of the domain.
\end{remark}

\begin{proposition}\label{prop:contour_integral}
For any solution $(s,c)$ the differential form $f(s,c)\,dt-s\,dx$ derived from the first equation of \eqref{eq:main_system_chem_flood} is exact, i.e. on any closed curve $\partial \Omega$ with finite number of shock points we have
\begin{equation}\label{eq:contour_int}
\oint\limits_{\partial \Omega} f(s,c)\,dt - s\, dx = 0.
\end{equation}
Similarly, from the second equation of \eqref{eq:main_system_chem_flood} we derive the exact form $(cs + a(c)) dx - f(s,c) dt$, therefore
\begin{equation}\label{eq:contour_int2}
    \oint\limits_{\partial\Omega} c (s\, dx - f(s,c)\, dt) + \oint\limits_{\partial\Omega} a(c)\, dx = 0.
\end{equation}
\end{proposition}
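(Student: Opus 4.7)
The plan is to deduce both identities from Green's theorem applied separately on the smooth sub-regions of $\Omega$, and then verify that the contributions along the interior shock arcs cancel by virtue of the Rankine--Hugoniot conditions \eqref{eq:RH-1}. This is the standard route from a classical conservation law to its contour-integral form.

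First, I would rewrite the system in 1-form language. On any open set where $(s,c)$ is $\mathcal{C}^1$, the first equation of \eqref{eq:main_system_chem_flood} is equivalent to $d\omega_1 = 0$ for the 1-form $\omega_1 := f(s,c)\,dt - s\,dx$, since
\[
d\omega_1 = \bigl(s_t + f(s,c)_x\bigr)\,dx\wedge dt.
\]
Analogously, the second equation gives $d\omega_2 = 0$ for $\omega_2 := (cs+a(c))\,dx - cf(s,c)\,dt$. By property (W1), the finitely many shock arcs meeting the compact set enclosed by $\partial\Omega$ partition its interior into finitely many open subdomains $\Omega_1,\ldots,\Omega_N$, on each of which $(s,c)$ extends to a $\mathcal{C}^1$ function up to the boundary. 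Green's theorem applied on each $\Omega_k$ then yields $\oint_{\partial\Omega_k}\omega_j = \iint_{\Omega_k} d\omega_j = 0$ for $j=1,2$.

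Next, I would sum over $k$. Each interior shock arc $\Gamma_i$ bounds two adjacent subdomains and is therefore traversed once from each side in opposite orientations. Parametrizing $\Gamma_i$ by $t$, so that $dx = \gamma^i_t(t)\,dt = v\,dt$, we get
\[
\omega_1\big|_{\Gamma_i} = \bigl(f(s,c) - v s\bigr)\,dt, \qquad \omega_2\big|_{\Gamma_i} = \bigl(v(cs + a(c)) - cf(s,c)\bigr)\,dt.
\]
The jumps of these restrictions across $\Gamma_i$ are $([f] - v[s])\,dt$ and $(v[cs+a] - [cf])\,dt$ respectively, both vanishing by \eqref{eq:RH-1}. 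Hence the two passes along $\Gamma_i$ carry identical integrands but opposite orientations and cancel in the sum. What remains of $\sum_k \oint_{\partial\Omega_k}\omega_j$ is exactly $\oint_{\partial\Omega}\omega_j$, which must therefore vanish. Rewriting $\oint_{\partial\Omega}\omega_2 = 0$ as $\oint_{\partial\Omega} c(s\,dx - f(s,c)\,dt) + \oint_{\partial\Omega} a(c)\,dx = 0$ yields \eqref{eq:contour_int2}.

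The only delicate point is handling the finitely many singular points of the shock structure inside $\Omega$ (shock endpoints, shock intersections, and points where $\partial\Omega$ itself crosses a shock). Since these are finite in number and all integrands $f,s,c,a$ are uniformly bounded, one can excise a small disk of radius $\rho$ around each such point, apply Green's theorem on the truncated subdomains, and let $\rho\to 0$; the boundary contributions along the excised arcs vanish with their diameter. This bookkeeping is the only real obstacle and is resolved by a routine cutoff argument; the essential content of the proof is exactness on each smooth piece plus Rankine--Hugoniot on the shocks.
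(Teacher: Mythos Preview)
Your proof is correct and follows a genuinely different route from the paper's. The paper first notes that the forms are closed on smooth subdomains (as you do), but for a general $\Omega$ it bypasses the geometric decomposition entirely: it invokes the weak integral identities \eqref{eq:weak-s-eq} and \eqref{eq:weak-c-eq} (which already encode the Rankine--Hugoniot conditions, per the remark preceding the proposition), approximates the characteristic function of $\Omega$ by smooth test functions $\psi$, and passes to the limit to obtain the contour integrals directly. Your approach is more elementary and self-contained---Green's theorem on each smooth piece plus explicit RH-cancellation along interior shock arcs---and makes transparent exactly where \eqref{eq:RH-1} enters. The paper's approach is shorter because the weak formulation has already absorbed the RH conditions, so no separate cancellation argument or excision of singular points is needed; the price is that one must accept the limiting argument from smooth test functions to a characteristic function, which hides the same bookkeeping you spell out.
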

\begin{proof}
Note that (W3), that is the fact that $(s, c)$ are the classical solutions of the system \eqref{eq:main_system_chem_flood} inside $\Omega_s$, holds if and only if the differential forms $f(s,c)\,dt - s\, dx$ and $(sc + a(c))\,dx - cf(s, c)\,dt$ are closed in any simply connected open subset of $\Omega_s$ due to the Poincar\'{e} lemma. Therefore, \eqref{eq:contour_int} and \eqref{eq:contour_int2} are clearly fulfilled for any $\Omega \subset \Omega_s$.

For an arbitrary $\Omega$ consider the integral equality \eqref{eq:weak-s-eq}. By approximating the characteristic function of the set $\Omega$ with smooth test functions, and then going to the limit, we arrive at \eqref{eq:contour_int}. The relation \eqref{eq:contour_int2} is obtained similarly form \eqref{eq:weak-c-eq}.

\end{proof}

\begin{lemma}
\label{lemma:t0_is_shocks}
For all $x > x^0$ we define
\[
t_0(x) = \sup\{ t: s(x, t) = 0 \}.
\]
Then 
\begin{itemize}
    \item $t_0(x) < +\infty$;
    \item $(x, t_0(x))$ is a point on a shock;
    \item $t_0(x)$ is continuous, piece-wise $\mathcal{C}^1$-smooth.
\end{itemize}
\end{lemma}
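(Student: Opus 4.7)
The plan is to address the three bullet points in turn, each with a different tool: a global flux balance for finiteness, a local characteristic argument for locating the point on a shock, and the $\mathcal C^1$-structure of the shocks for continuity and piecewise smoothness. I expect the main obstacle to be the characteristic propagation in the second step.

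For finiteness, I argue by contradiction. If $t_0(x) = +\infty$, then Lemma~\ref{lemma_zero_below} propagates zeros downward to force $s(x, t) = 0$ for every $t \geq 0$, so $f(s(x, t), c(x, t)) \equiv 0$ by (F1). Applying the contour identity~\eqref{eq:contour_int} to the rectangle $[0, x] \times [0, T]$ kills the right-edge flux and reduces the mass balance to
\[
\int_0^x s(y, T)\,dy = \int_0^x s_0^x(y)\,dy + \int_0^T f(s_0^t(\tau), c_0^t(\tau))\,d\tau.
\]
By (S3) together with (F1)--(F2), $c \mapsto f(\delta^0, c)$ is continuous and strictly positive on $[0, 1]$, hence bounded below by some $m_0 > 0$. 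The right-hand side then grows at least like $m_0 T$ while the left-hand side is bounded by $x$, a contradiction for $T > x/m_0$.

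For the second claim, by the sup definition combined with Lemma~\ref{lemma_zero_below}, $s(x, t) = 0$ for all $t < t_0(x)$, while $s(x, t) > 0$ whenever $t > t_0(x)$ and $(x, t)$ is in the smoothness region. If $(x, t_0(x)) \in \inter \Omega_s$, continuity gives $s(x, t_0(x)) = 0$, and since (F1)--(F2) yield $f_s(0, c) = f_c(0, c) = 0$, the characteristic through this point is vertical; the Gronwall estimate $|s'(t)| \leq MC_K s(t)$ from Case~1 of Lemma~\ref{lemma_zero_below}, applied forward in time via (W2), keeps $s \equiv 0$ along this characteristic in a two-sided time neighborhood, contradicting $s(x, t) > 0$ for $t$ slightly above $t_0(x)$. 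The case $(x, t_0(x)) \in E_r$ is excluded directly by Case~3 of the same lemma, since the one-sided limit $s(x-0, t_0(x)) = 0$ puts us exactly in its hypothesis. Hence $(x, t_0(x))$ lies on some shock $\Gamma_i$.

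Finally, each shock $\Gamma_i$ is a $\mathcal C^1$-curve $x = \gamma_i(t)$ by (W1), with velocity $\gamma_i'(t) \in (0, \|f\|_{\mathcal C^1})$ strictly positive by Proposition~\ref{prop:inadmissible_shocks}, so $\gamma_i$ admits a $\mathcal C^1$ inverse. By the locally-finite assumption in (W1), shock intersections are isolated. Locally around any $x$, the graph of $t_0$ coincides with $\gamma_i^{-1}$ for a single index $i$ except at isolated shock-intersection points; at such a meeting point both shocks pass through the same $(x_*, t_*)$, so the one-sided limits of $t_0$ agree and yield continuity, while between intersections $t_0$ inherits the $\mathcal C^1$-smoothness of $\gamma_i^{-1}$.
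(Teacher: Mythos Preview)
Your Step~1 (the flux-balance contradiction on the rectangle $[0,x]\times[0,T]$) is essentially the paper's argument and is fine.

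Step~2 has a genuine gap. When excluding non-shock locations for $Y=(x,t_0(x))$, you treat only $Y\in\inter\Omega_s$ and $Y\in E_r$, then conclude ``hence $(x,t_0(x))$ lies on some shock''. You have omitted the case $Y\in E_l$, i.e.\ $Y$ is the lower-$t$ endpoint of some shock $\Gamma_i$ with $\lim_{\tau\to t_0(x)+0}\gamma^i_t(\tau)=0$. Such a point is not in $\inter\Omega_s$ (it lies in $\overline{\Gamma}_i\subset\partial\Omega_s$), so your forward-characteristic argument does not apply: the vertical characteristic cannot be continued past $Y$ inside the smoothness region, because the shock $\Gamma_i$ emerges from $Y$ with zero initial velocity. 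Nor is this case covered by Case~3 of Lemma~\ref{lemma_zero_below}, which concerns $E_r$. The paper spends the bulk of its Step~2 on $E_l$, arguing by well-ordering: take the leftmost such $x_1$, so that $(x,t_0(x))$ is a genuine shock point for all $x\in(x^0,x_1)$; set $t^*=\lim_{x\to x_1-0}t_0(x)$; and show that each alternative $t^*>t_0(x_1)$ or $t^*\le t_0(x_1)$ leads to a contradiction, the latter by producing a point in $E_r$ and invoking Case~3 of Lemma~\ref{lemma_zero_below}.

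This omission also undermines your Step~3. Your continuity argument asserts that whenever $t_0$ transitions from $\gamma_i^{-1}$ to $\gamma_j^{-1}$, ``both shocks pass through the same $(x_*,t_*)$''. That is precisely the continuity statement you are trying to prove; nothing you have written rules out $t_0$ jumping from one shock curve to a disjoint one. The paper's argument is different and relies on the complete Step~2: a jump discontinuity in $t_0$ would force a point in $E_r$ or $E_l$, and both have been shown impossible. Since you have not excluded $E_l$, this route is closed to you as well.
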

\begin{proof}
\underline{Step 1.} Suppose there exists $x_1 > x^0$ such that $t_0(x_1) = +\infty$. Therefore, $s(x_1, t) = 0$ for all $t \geqslant 0$. Consider a rectangle $R_{\tau} = [0,x_1]\times [0, \tau]$. Since we do not have admissible shocks with velocity $0$ or $\infty$, the boundaries of $R_\tau$ have a finite number of shock points. Therefore, by Proposition \ref{prop:contour_integral} we have
\begin{align*}
0 &= \oint\limits_{\partial R_\tau} f(s,c)\,dt - s\,dx \\
{} &= \int\limits_0^{x_1} s(x, \tau)\,dx - \int\limits_0^{x_1} s_0^x(x)\,dx + \int\limits_0^{\tau} f(s(x_1, t), c(x_1, t))\,dt - \int\limits_0^{\tau} f(s_0^t(t), c_0^t(t))\,dt.
\end{align*}
For these integrals we have estimates
\begin{align*}
& \int\limits_0^{x_1} s_0^x(x)\,dx \geqslant 0, \qquad \int\limits_0^{x_1} s(x, \tau)\,dx \leqslant x_1, \\
& \int\limits_0^{\tau} f(s(x_1, t), c(x_1, t))\,dt = \int\limits_0^{\tau} f(0, c(x_1, t))\,dt = 0, \\
& \int\limits_0^{\tau} f(s_0^t(t), c_0^t(t))\,dt \geqslant \tau \varepsilon, \quad \varepsilon = \min\big\{ f(s,c): s\in[\delta^0, 1], c\in[0,1] \big\},
\end{align*}
where $\delta^0$ is defined in (S3). Therefore, we obtain
\[
0 = \oint\limits_{\partial R_\tau} f(s,c)\,dt - s\,dx \leqslant x_1 - \tau\varepsilon,
\]
which clearly breaks for $\tau > x_1/\varepsilon$, which leads to a contradiction. Thus, $t_0(x) = +\infty$ is impossible, which proves the first assertion of this lemma.

\underline{Step 2.} Suppose there is $x_1 > x^0$, such that $Y=(x_1, t_0(x_1))$ is not a shock point. Let's consider all possible cases for $Y$:
\begin{itemize}
    \item $Y\in \inter \Omega_s$. Then similar to Case 1 of Lemma \ref{lemma_zero_below} we can construct a characteristic through it, that turns out to be a vertical line carrying the value $s=0$. If we continue this characteristic upwards, we conclude that there are values $t>t_0(x_1)$ for which $s(x_1, t)=0$, therefore arriving at a contradiction. Thus, $Y\notin \inter \Omega_s$.
    \item $Y\in E_r$. Similarly, Case 3 of Lemma \ref{lemma_zero_below} prohibits this case as well.
    \item $Y\in E_l$, where $E_l$ is similar to $E_r$ but consists of the left ends of shocks:
    \[
    (x, t) \in E_l \Longleftrightarrow \exists i: x = \lim\limits_{\tau\to t+0} \gamma^i(\tau), \text{ but } \lim\limits_{\tau\to t+0} \gamma^i_t(\tau) = 0, \text{ so } (x,t)\notin \Gamma_i.
    \]
    Since $Y$ is not a point of any shock, it is a point of continuity of $s$.
    Therefore, $s(x_1\pm 0, t_0(x_1)) = 0$, and thus $x_1 \neq x^0$, since $s(x^0-0, 0) \geqslant \delta^0 > 0$ due to (S2). Since the set $E_l$ is locally finite, we can assume $(x_1, t_0(x_1))$ is the first such point, i.e. $(x, t_0(x)) \in \Gamma_j$ for some $j$ for all $x\in(x^0, x_1)$.  Now let us consider the limit
    \[
    t^* = \lim_{x\to x_1-0} t_0(x).
    \]
    It is clear that either $(x_1, t^*) \in \Gamma_j$ for some $j$, or $(x_1, t^*) \in E_r$. Let's go over possible relations between $t^*$ and $t_0(x_1)$:
    \begin{itemize}
        \item If $t^* > t_0(x_1)$, then it is clear that $s(x_1, t) = 0$ for all $t<t^*$, which contradicts the definition of $t_0(x_1)$.
        \item If $t^* \leqslant t_0(x_1)$, then we see that $s(x_1, t^*+0) = s(x_1, t^*-0) = 0$, therefore, due to Proposition \ref{prop:inadmissible_shocks} $(x_1, t^*)$ cannot be a point of an admissible shock, therefore $(x_1, t^*) \in E_r$, which also leads to a contradiction with Case 3 of Lemma \ref{lemma_zero_below}.
    \end{itemize}
    Therefore, we conclude, that $Y\in E_l$ is impossible as well.
\end{itemize}
After eliminating every other possibility we arrive at $Y\in\Gamma_i$ for some $i$, which proves the second assertion of the lemma.

The third assertion follows immediately. Any discontinuity in $t_0(x)$ will create either a point in $E_r$, or a point in $E_l$, but both of these were just eliminated as impossible. Therefore, $t_0(x)$ is continuous and consists of a locally finite number of $\mathcal C^1$-smooth pieces.
\end{proof}

\begin{corollary}\label{corollary:zero_flow_area}
Define $\Omega_0 = \{(x,t): x > x^0, 0\leqslant t < t_0(x)\}$. Then $s(x,t) = 0$ in $\Omega_0$ and $s(x,t) > 0$ outside $\overline{\Omega}_0$. Moreover, $s(x,t)$ is locally separated from $0$ outside $\overline{\Omega}_0$.
\end{corollary}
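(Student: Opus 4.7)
I would split the corollary into its three assertions and prove them in order. The first (vanishing on $\Omega_0$) is almost a restatement of Lemma~\ref{lemma_zero_below}: for $(x,t) \in \Omega_0$ with $t=0$ use (S1), while for $t > 0$ the sup definition of $t_0(x)$ supplies some $\tau^* \in (t, t_0(x)]$ with $s(x,\tau^*) = 0$, and Lemma~\ref{lemma_zero_below} then propagates the zero down to $s(x,t)=0$.

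For the second assertion, given $(x_1,t_1) \notin \overline{\Omega}_0$, I would split by the location of $x_1$ relative to $x^0$. If $x_1 > x^0$, then $t_1 > t_0(x_1)$ and the sup definition gives $s(x_1,t_1) > 0$. If $x_1 = 0$, use (S3). If $0 < x_1 < x^0$, a hypothetical zero at $(x_1,t_1)$ would, by Lemma~\ref{lemma_zero_below}, force $s(x_1,0) = s_0^x(x_1) = 0$, contradicting (S2). The only delicate case is $x_1 = x^0 \in (0,+\infty)$: suppose $s(x^0,t_1) = 0$, so Lemma~\ref{lemma_zero_below} gives $s(x^0,t) = 0$ (and hence, by (F1)--(F2), $f(s(x^0,t), c(x^0,t)) = 0$) for all $0 \le t < t_1$. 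Applying Proposition~\ref{prop:contour_integral} to the rectangle $R = [x^0, x^0+\delta] \times [0, t_1]$, the bottom integral vanishes by (S1), the left integral by the preceding remark, and the two remaining non-negative contributions
\[
\int_{x^0}^{x^0+\delta} s(x, t_1)\,dx, \qquad \int_0^{t_1} f(s(x^0+\delta, t), c(x^0+\delta, t))\, dt
\]
are both zero. The first forces $s(\cdot, t_1) \equiv 0$ a.e.\ on $(x^0, x^0+\delta)$, hence pointwise off the (finitely many) shocks by piece-wise $\mathcal{C}^1$ regularity, giving $t_0(x) \ge t_1$ throughout $(x^0, x^0+\delta)$ and thus placing $(x^0, t_1)$ in $\overline{\Omega}_0$ --- contradiction.

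For the local separation, let $K$ be compact with $K \cap \overline{\Omega}_0 = \emptyset$ and suppose for contradiction that there is a sequence $(x_n,t_n) \in K$ along which $s$ (or one of its one-sided limits across a shock) tends to $0$. Extract a subsequential limit $(x_*,t_*) \in K$; by the second assertion $s(x_*,t_*) > 0$ in the smoothness area, while on a shock both $s^\pm$ are strictly positive ($s^- > 0$ always by Proposition~\ref{prop:inadmissible_shocks}, and $s^+ = 0$ together with Lemma~\ref{lemma_zero_below} would, by the same contour argument as above, place the shock point in $\overline{\Omega}_0$). Continuity on smooth pieces and one-sided continuity across shocks then contradict the convergence. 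The main obstacle throughout is precisely the boundary column $\{x = x^0\}$: the sup definition of $t_0(\cdot)$ gives no direct control on it, so the exact-form argument of Proposition~\ref{prop:contour_integral} is genuinely needed to rule out zeros above $\lim_{x \to x^0+} t_0(x)$.
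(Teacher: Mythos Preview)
Your argument is correct. The paper states this corollary with no proof at all, treating it as an immediate consequence of Lemma~\ref{lemma_zero_below} and Lemma~\ref{lemma:t0_is_shocks}, so there is nothing to compare against beyond the obvious intended route, which you follow.

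One point worth noting: you correctly isolate the column $\{x = x^0\}$ as the only genuinely delicate case, and your use of the exact-form identity from Proposition~\ref{prop:contour_integral} on the rectangle $[x^0, x^0+\delta]\times[0,t_1]$ is a clean way to handle it. This subtlety is real---since (S1) gives $s_0^x(x^0)=0$, Lemma~\ref{lemma_zero_below} by itself does \emph{not} exclude a zero at $(x^0,t_1)$---and the paper's terse statement glosses over it. Your remark that ``the same contour argument'' covers the $s^+=0$ shock case in the third assertion is slightly imprecise (for $x_*>x^0$ the definition of $t_0$ suffices directly, and for $x_*<x^0$ condition (S2) suffices; the contour is only needed at $x_*=x^0$), but the logic is sound either way. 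The compactness argument for local separation is standard and works as written, using the locally finite shock structure from (W1) to reduce to finitely many positive one-sided limits near any accumulation point.
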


\begin{proposition}\label{prop:c_on_zero_boundary}
$c_t = 0$ in $\Omega_0$, therefore $c(x, t_0(x)) = c^x_0(x)$.
\end{proposition}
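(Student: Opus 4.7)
The plan is to show that $\Omega_0$ lies inside the smoothness region of the solution, after which the second equation of \eqref{eq:main_system_chem_flood} reduces to $a_c(c)\, c_t = 0$ by a one-line calculation. First I would establish that no admissible discontinuity curve can sit strictly inside $\Omega_0$: by Corollary \ref{corollary:zero_flow_area}, $s \equiv 0$ in $\Omega_0$, so any shock interior to $\Omega_0$ would have $s^- = 0$, which Proposition \ref{prop:inadmissible_shocks} forbids. Together with Lemma \ref{lemma:t0_is_shocks} (which confines the upper boundary of $\Omega_0$ to shock curves) this gives $\Omega_0 \subset \inter \Omega_s$, so (W3) applies pointwise.

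Next, writing the second equation of \eqref{eq:main_system_chem_flood} in classical form inside $\Omega_0$ and expanding via the chain and product rules gives
\[
c_t\, s + c\, s_t + a_c(c)\, c_t + f(s,c)\, c_x + c f_s(s,c)\, s_x + c f_c(s,c)\, c_x = 0.
\]
Since $s \equiv 0$ in $\Omega_0$ (hence $s_t = s_x = 0$), $f(0,c) = 0$ by (F1), and differentiating the identity $f(0,c) \equiv 0$ with respect to $c$ yields $f_c(0, c) = 0$, every term vanishes except the third, leaving $a_c(c)\, c_t = 0$ in $\Omega_0$. By (A2), $a_c(c) > 0$ whenever $c(x,t) \in (0,1)$, so $c_t = 0$ on this open set; at the two edge values $c \in \{0,1\}$ a short continuity argument (if $c_t \neq 0$ there, then $c$ would immediately leave $\{0,1\}$ in a punctured neighbourhood, where the previous case applies and forces $c_t = 0$ by continuity at the original point) gives $c_t = 0$ as well. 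Thus $c_t \equiv 0$ throughout $\Omega_0$.

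Consequently $c(x, \cdot)$ is constant on each vertical segment $\{x\} \times [0, t_0(x))$, and combining with the initial data $c(x,0) = c_0^x(x)$ for $x > x^0$ and passing to the limit $t \to t_0(x) - 0$ yields $c(x, t_0(x)) = c_0^x(x)$ (understood as the limit from within $\Omega_0$, which is the meaningful interpretation since $(x, t_0(x))$ sits on a shock). The only substantive point is the preliminary reduction $\Omega_0 \subset \inter\Omega_s$; once (W3) is available in $\Omega_0$, the remainder is a direct substitution plus (A2).
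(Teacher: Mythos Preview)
The paper states this proposition without proof, so there is nothing to compare against; your argument is the natural one and is essentially correct. Two small remarks: (i) $\Omega_0$ as defined includes the segment $t=0$, so strictly speaking you only get $\inter\Omega_0\subset\inter\Omega_s$; this is harmless, since $c_t=0$ on $\inter\Omega_0$ together with the initial condition at $t=0$ already yields $c(x,t)=c_0^x(x)$ for $0\le t<t_0(x)$. (ii) For the endpoint case, note that (A2)--(A3) force $a_c(0)>0$, so only $c=1$ is potentially problematic; there your continuity argument works because $c\le 1$ and $c$ is $\mathcal C^1$ in $\inter\Omega_0$ (alternatively, the set $\{c_t\neq 0\}$ would be open with $c\equiv 1$ on it, which is absurd).
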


\subsection{Lagrange coordinates}
\label{sec2-Lagrange}
It is hard to trace back the history of the coordinates transformation described in this subsection. Many authors describe the transformation with no citations of previous work. The oldest reference we found is \cite{Courant} cited in \cite{Wa87} in the context of gas dynamics equations. The idea is also presented in the lectures by Gelfand \cite{Gelfand} for the case of an arbitrary system of conservation laws. The splitting technique for the system \eqref{eq:main_system_chem_flood} using the Lagrange coordinate transformation is presented in \cite{PiBeSh06}. It is later developed and applied to different systems by many authors (see \cite{Pires2021} and references therein).

We denote by $\varphi$ the potential such that
\begin{equation} 
\label{eq:dPhi}
    d\varphi=f(s,c)\,dt-s\,dx.
\end{equation}
To explain the physical meaning of $\varphi$ let us consider any trajectory $\nu$ connecting $(0, 0)$ and $(x, t)$.
When $s$ denotes the saturation of some liquid, the potential $\varphi(x, t)$ is equal to the amount of this liquid
passing through the trajectory:
\begin{equation}
\label{eq:phi}
    \varphi(x,t)=\int\limits_{\nu} f(s,c)\,dt-s\,dx.
\end{equation}
This coordinate change is only applicable in the area $Q_{orig} = Q\setminus \overline{\Omega}_0$, where the saturation $s$ and the flow function $f(s,c)$ are not zero. It keeps the $x$ coordinate, so it maps the axis $\Gamma_t$ onto itself. 
The segment $[0, x^0]\times\{0\}$ maps into a curve $( \varphi_0(x), x)$, where
\begin{equation}
\label{eq:def_varphi_0}
\varphi_0(x) = -\int\limits_0^x s^x_0(r) \, dr.
\end{equation}
The curve $(x, t_0(x))$ for $x > x^0$ maps into a horizontal line beginning at the point $(\varphi_0(x^0), x^0)$. Therefore $Q_{orig}$ maps into 
\[
Q_{lagr} = Q \cup \{ (x, \varphi) : 0 < x \leqslant x^0, 0 > \varphi > \varphi_0(x) \} \cup \left((x^0, +\infty) \times (\varphi_0(x^0), 0)\right),
\]
see Fig.~\ref{fig:orig-lagr-areas}.

Corollary \ref{corollary:zero_flow_area} guaranties that there is a reverse transform given by
\[
dt=\frac1{f(s,c)}\,d\varphi+\frac s{f(s,c)}\,dx,
\]
and the denominators are locally separated from zero, therefore this coordinate change is a piecewise $\mathcal C^1$-diffeomorphism.
Since $\mathcal C^1$-smooth curves preserve their smoothness properties under any diffeomorphism, all discontinuity curves map into $\mathcal C^1$-smooth discontinuity curves. 

Substituting \eqref{eq:dPhi} into \eqref{eq:contour_int2}, we get that the form $  - c \,d \varphi + a(c) \,dx$ is also exact in any area in $Q_{lagr}$ where the images of $(s, c)$ are $\mathcal C^1$-smooth. Therefore, it is closed too and leads to the identity
\[
0 = d(c \,d \varphi - a(c)\, dx) = \left( \frac{\partial c}{\partial x} + \frac{\partial a(c)}{\partial \varphi}\right)\, dx \wedge d\varphi.
\]
Together with the identity
\[
0 = d(dt) = d\left( \dfrac{1}{f} \, d\varphi + \dfrac{s}{f} \, dx\right) = \left(\frac{\partial}{\partial x}\left(\frac1f\right) - \frac{\partial}{\partial \varphi}\left(\frac s f\right)\right) \, dx \wedge d\varphi
\]
it gives us inside the areas of $\mathcal C^1$-smoothness the classical system
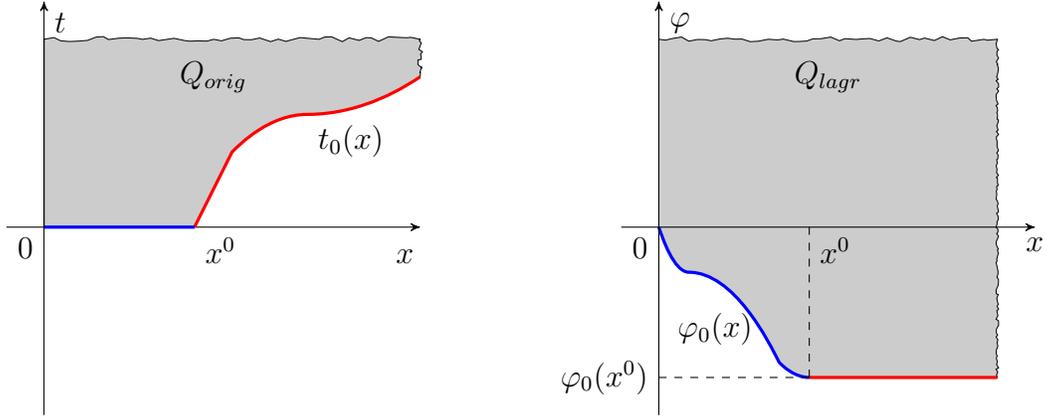
\begin{figure}
    \begin{minipage}{\linewidth}
    \begin{center}
    \def\lowborder{-5}
\def\leftborder{-1}
\def\upperborder{6}
\def\rightborder{10}
\def\startpoint{\rightborder * 0.4}
\def\phinote{0.8 * \lowborder}
\def\eps{1}
\begin{tikzpicture}[>=stealth', yscale=0.5, xscale=0.5]
\draw[thin,->] ({\leftborder}, 0) -- (\rightborder, 0) node[below] {$x\phantom{x^0}$};
\begin{scope}
\filldraw[fill=black!20, ultra thin]
(\startpoint, 0)  -- (\startpoint+1, 2) parabola[bend at end] (\startpoint + 3, 3) parabola (\startpoint + 6, 4)
  decorate [decoration={random steps,segment length=1pt,amplitude=1pt}] {-- (\startpoint + 6, \upperborder - \eps)}
 decorate [decoration={random steps,segment length=3pt,amplitude=1pt}] {--(0, \upperborder - \eps)}
 -- (0, 0) -- cycle;
\draw[very thick, red] (\startpoint, 0)  -- (\startpoint+1, 2) parabola[bend at end] (\startpoint + 3, 3) parabola (\startpoint + 6, 4);
\node[below right] at (\startpoint + 3, 3) {$t_0(x)$} ;
\node[] at (\rightborder * 0.45, 4) {$Q_{orig}$};
\node[below right] at (\startpoint, 0) {$x^0$};
\node[below left] at (0, 0) {$0$};
\draw[very thick, blue] (0, 0) -- (\startpoint, 0);
\draw[thin,->] (0,\lowborder) -- (0,\upperborder) node[below right ] {$t$};
\end{scope}
\end{tikzpicture}
\hspace{1cm}
\begin{tikzpicture}[>=stealth', yscale=0.5, xscale=0.5]
\begin{scope}
\filldraw[fill=black!20, ultra thin]
(0, 0)  parabola[bend at end] (\startpoint * 0.2, \phinote * 0.3)
parabola (\startpoint * 0.8, \phinote *0.9) parabola[bend at end] (\startpoint, \phinote) -- (\rightborder - \eps, \phinote)
  decorate [decoration={random steps,segment length=1pt,amplitude=0.5pt}] {-- (\rightborder - \eps, \upperborder - \eps)}
 decorate [decoration={random steps,segment length=3pt,amplitude=1pt}] {--(0, \upperborder - \eps)}
 -- (0, 0) -- cycle;
 \draw[thin,->] ({\leftborder}, 0) -- (\rightborder, 0) node[below] {$x$};
\draw[very thick, blue] (0, 0)  parabola[bend at end] (\startpoint * 0.2, \phinote * 0.3)
parabola (\startpoint * 0.8, \phinote *0.9) parabola[bend at end] (\startpoint, \phinote);
\node[] at (\rightborder * 0.45, 4) {$Q_{lagr}$};
\node[below right] at (\startpoint, 0) {$x^0$};
\node[below left] at (0, 0) {$0$};
\draw[very thick, red] (\startpoint, \phinote) -- (\rightborder - \eps, \phinote);
\draw[dashed]  (\startpoint, \phinote) -- (\startpoint, 0);
\draw[dashed] (\startpoint, \phinote) -- (0, \phinote);
\node[left] at (0, \phinote) {$\varphi_0(x^0  )$};
\node[below] at (\startpoint/2-0.5, \phinote/2) {$\varphi_0(x)$};
\end{scope}
\draw[thin,->] (0,\lowborder) -- (0,\upperborder) node[below right ] {$\varphi$};
\end{tikzpicture}
    \end{center}
    \end{minipage}
    \caption{Areas $Q_{orig}$ (grey area on the left), and $Q_{lagr}$ (grey area on the right). The red curve on the left is mapped onto the red line on the right. The blue line on the left is mapped onto the blue curve on the right.}
    \label{fig:orig-lagr-areas}
\end{figure}
\begin{align*}
\begin{split}
    \frac{\partial}{\partial x}\left(\frac1f\right) - \frac{\partial}{\partial \varphi}\left(\frac s f\right)&=0, \\
    \frac{\partial c}{\partial x} + \frac{\partial a(c)}{\partial \varphi}&=0. 
\end{split}
\end{align*}
We use the notation
\begin{equation} 
\label{eq:def-U-F}
\mathcal{U}(\varphi, x)=\frac1{f(s(x,t),c(x,t))}, \quad \zeta(\varphi,x) = c(x,t), \quad\text{and}\quad\mathcal F(\mathcal U,\zeta)=-\frac{s}{f(s,c)}
\end{equation}
in order to transform this system into the system of conversation laws
\begin{align}
       \mathcal U_x + \mathcal F(\mathcal U,\zeta)_\varphi & = 0,\label{eq:U-lagr-eqn}\\
    \zeta_x + a(\zeta)_\varphi&=0. \label{eq:c-lagr-eqn}
\end{align}

\begin{proposition}
Suppose that $(s,c)$ is a solution of the problem \eqref{eq:main_system_chem_flood}  (see Definition \ref{def:solution})
with initial and boundary conditions \eqref{eq:Initial_boundary_problem} satisfying the conditions (S1)--(S3). Then the functions $(\mathcal U, \zeta)$ given by the formulae \eqref{eq:def-U-F} satisfy the integral equations
\begin{align}
\label{eq:U_weak}
\begin{split}
    \iint\limits_{Q_{lagr}} \mathcal U \widetilde \psi_x + \mathcal F(\mathcal U, \zeta) \widetilde \psi_\varphi  & \,d\varphi\,dx +
    \int\limits_0^\infty\mathcal U_0^\varphi(\varphi) \widetilde \psi(\varphi, 0)\,d\varphi \\ 
    & + \int\limits_{x^0}^\infty
 \mathcal F(\mathcal U(\varphi_0(x^0)+0, x), c_0^x(x))\widetilde \psi(\varphi_0(x^0), x)\,dx = 0
\end{split}
\end{align}
and
\begin{align}
\label{eq:zeta_weak}
\begin{split}
    \iint\limits_{Q_{lagr}} \zeta\widetilde\psi_x + a(\zeta) \widetilde\psi_\varphi & \,d\varphi\,dx + 
    \int\limits_0^{x^0}  (s_0^x(x)c_0^x(x) + a(s_0^x(x))) \widetilde\psi(\varphi_0(x), x)\,dx \\
    {} & + \int\limits_{x^0}^\infty  a(c_0^x(x)) \widetilde\psi(\varphi_0(x^0), x)\,dx +
    \int\limits_{0}^\infty \zeta_0^\varphi(\varphi)\widetilde\psi(\varphi, 0) \, d\varphi  = 0
\end{split}
\end{align}
for all $\widetilde \psi \in \mathcal D(Q_{lagr})$,
where the initial values $\mathcal U_0^\varphi$ and $\zeta_0^\varphi$ are given by 
\begin{equation}
\label{eq:U_zeta_weak_initial_values}
\mathcal U_0^\varphi(\varphi(0, t)) = \dfrac{1}{f(s_0^t(t), c_0^t(t))}, \qquad \zeta_0^\varphi (\varphi(0, t)) = c_0^t(t).
\end{equation}
\end{proposition}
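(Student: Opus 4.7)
The plan is to apply Stokes' theorem in $Q_{lagr}$ to two carefully chosen $1$-forms built from the test function $\widetilde{\psi}\in\mathcal{D}(Q_{lagr})$, and then to evaluate the resulting contour integrals along the three arcs of $\partial Q_{lagr}$ visible in Fig.~\ref{fig:orig-lagr-areas}. First I would introduce
\[
\omega_1 = \mathcal{U}\widetilde{\psi}\,d\varphi - \mathcal{F}\widetilde{\psi}\,dx, \qquad \omega_2 = \zeta\widetilde{\psi}\,d\varphi - a(\zeta)\widetilde{\psi}\,dx,
\]
and compute, using the classical equations \eqref{eq:U-lagr-eqn}--\eqref{eq:c-lagr-eqn} valid in every $\mathcal{C}^1$-smoothness subdomain,
\[
d\omega_1 = (\mathcal{U}\widetilde{\psi}_x + \mathcal{F}\widetilde{\psi}_\varphi)\,dx\wedge d\varphi, \qquad d\omega_2 = (\zeta\widetilde{\psi}_x + a(\zeta)\widetilde{\psi}_\varphi)\,dx\wedge d\varphi.
\]
Pulling back to $(x,t)$ via $d\varphi = f\,dt - s\,dx$, one checks the identities $\omega_1 = \widetilde{\psi}\,dt$ and $\omega_2 = \widetilde{\psi}\bigl(cf\,dt - (cs + a(c))\,dx\bigr)$.

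Next I would verify that $\omega_1$ and $\omega_2$ are continuous across every shock curve in $Q_{lagr}$, so that Stokes' theorem can be applied piecewise with all internal shock contributions cancelling in pairs. For $\omega_1$ this is automatic: $t$ is a globally single-valued coordinate on $Q_{lagr}$, hence $\widetilde{\psi}\,dt$ is continuous across any curve. For $\omega_2$ continuity is exactly the Rankine--Hugoniot condition for the second equation of \eqref{eq:main_system_chem_flood}, equivalently the exactness statement \eqref{eq:contour_int2} of Proposition~\ref{prop:contour_integral}. Cutting $Q_{lagr}$ along its locally finite shock network and summing Stokes on each piece then yields, with counterclockwise orientation,
\[
\iint_{Q_{lagr}} d\omega_i = \oint_{\partial Q_{lagr}}\omega_i, \quad i=1,2.
\]

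The third step is the explicit evaluation of these contour integrals along the three arcs of $\partial Q_{lagr}$: arc (i) $\{x=0,\,\varphi\geqslant 0\}$ traversed with $\varphi$ descending from $+\infty$ to $0$; arc (ii) $\{\varphi = \varphi_0(x),\,0\leqslant x\leqslant x^0\}$ traversed with $x$ ascending from $0$ to $x^0$; and arc (iii) $\{\varphi = \varphi_0(x^0),\,x\geqslant x^0\}$ traversed with $x$ ascending from $x^0$ to $+\infty$. On arc (i) we use $dx=0$ and $\mathcal{U}|_{x=0}=\mathcal{U}_0^\varphi$, $\zeta|_{x=0}=\zeta_0^\varphi$ from \eqref{eq:U_zeta_weak_initial_values}. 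On arc (ii) we use $d\varphi = -s_0^x(x)\,dx$ together with the boundary traces $(\mathcal{U},\mathcal{F},\zeta) = \bigl(1/f(s_0^x,c_0^x),\,-s_0^x/f(s_0^x,c_0^x),\,c_0^x\bigr)$. On arc (iii) we use $d\varphi=0$ and the identity $\zeta(\varphi_0(x^0)+0,x)=c_0^x(x)$ from Proposition~\ref{prop:c_on_zero_boundary}. A crucial algebraic cancellation on arc (ii) gives $-s_0^x\,\mathcal{U} - \mathcal{F} \equiv 0$, so this arc contributes nothing to $\oint\omega_1$, explaining the absence of an $x$-axis boundary term in \eqref{eq:U_weak}; for $\omega_2$ the same arc contributes $-\int_0^{x^0}(s_0^x c_0^x + a(c_0^x))\widetilde{\psi}\,dx$. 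Summing the three arc contributions reproduces precisely the boundary terms in \eqref{eq:U_weak} and \eqref{eq:zeta_weak}.

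The main technical obstacle is the piecewise application of Stokes' theorem. One must verify that the shock network of $(s,c)$ maps under the Lagrange transformation to a locally finite collection of $\mathcal{C}^1$-smooth curves in $Q_{lagr}$, and that $\partial Q_{lagr}$ itself is piecewise $\mathcal{C}^1$; these follow from (W1) in Definition~\ref{def:solution}, from the piecewise $\mathcal{C}^1$-smoothness of $t_0(x)$ proved in Lemma~\ref{lemma:t0_is_shocks}, and from the diffeomorphism property of the Lagrange change of variables away from $\overline{\Omega}_0$ established in Sect.~\ref{sec2-Lagrange}. A short mollification of $\widetilde{\psi}$ near points where shocks terminate on $\partial Q_{lagr}$, in particular at the corner $(x^0, \varphi_0(x^0))$, is likely needed to fully justify the interchange of Stokes and the compact-support limit.
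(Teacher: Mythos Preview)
Your argument is correct. Both approaches rest on Stokes' theorem together with the Rankine--Hugoniot conditions, but the organisation differs. The paper treats the two equations asymmetrically: for \eqref{eq:zeta_weak} it pulls the test function back to the original coordinates via $\psi(x,t)=\widetilde\psi(\varphi(x,t),x)$ and substitutes straight into the already-established weak identity \eqref{eq:weak-c-eq}, letting that identity absorb the shock contributions; for \eqref{eq:U_weak} it applies Stokes in $Q_{orig}$ to the form $\psi\,dt$ (written as $\iint_{Q_{orig}}\psi_x\,dt\,dx=\int_{\partial Q_{orig}}\psi\,n_x\,d\sigma$) and then transforms each side. Your route is more uniform: you stay entirely in $Q_{lagr}$, build $\omega_1,\omega_2$, and apply piecewise Stokes to both, invoking \eqref{eq:RH-2} to cancel the internal shock contributions. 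For the $\mathcal U$-equation the two computations are literally the same, since your $\omega_1=\widetilde\psi\,dt$ is the paper's $\psi\,dt$; for the $\zeta$-equation you bypass the detour through \eqref{eq:weak-c-eq}. Your differential-form packaging makes the arc-(ii) cancellation $-s_0^x\,\mathcal U-\mathcal F\equiv 0$ transparent; the paper arrives at the same identity but only remarks on it after the fact. One small sharpening: what you need across a shock is not continuity of $\omega_2$ as a form (its coefficients jump) but that its \emph{pullback to the shock curve} is single-valued, which is exactly \eqref{eq:RH-2}; your wording already indicates you have this in mind.
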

\begin{proof}
Every test function $\widetilde \psi \in \mathcal D(Q_{lagr})$ corresponds to a continuous piecewise $\mathcal C^1$-smooth function $\psi$ with compact support in $Q_{orig}$ given by
$$\psi(x, t) = \widetilde\psi(\varphi(x, t), x).$$
The derivatives of $\psi$ satisfy the following relations due to \eqref{eq:dPhi}:
\begin{equation}
\label{eq:tilde_psi_derivatives}
    \psi_t = \widetilde\psi_\varphi f,
    \qquad
    \psi_x= \widetilde\psi_x - s\widetilde\psi_\varphi.
\end{equation}
Let's prove \eqref{eq:zeta_weak} first, since it is a more straightforward substitution. Note, that the integral equality \eqref{eq:weak-c-eq} holds for $\psi$, since we can approximate it with smooth functions. Substituting \eqref{eq:tilde_psi_derivatives} into the main term of \eqref{eq:weak-c-eq} and taking into account the fact that the Jacobian of the change of variables $(x, t) \to (\varphi(x, t), x)$ is equal to $f$, we get
\begin{equation}
\label{eq:zeta_weak_eq_1}
    \iint\limits_{Q_{orig}} (sc + a(c)) \psi_t + cf(s,c)\psi_x \,dx\,dt = 
    \iint\limits_{Q_{lagr}} a(\zeta) \widetilde\psi_\varphi + \zeta\widetilde\psi_x \,d\varphi\,dt .
\end{equation}
Since by definition
$\psi(x, 0) = \widetilde\psi(\varphi_0(x), x)$ for $x \leqslant x^0$ and $\psi(x, 0) = \widetilde\psi(\varphi_0(x^0), x)$ for $x > x^0$,
we have
\begin{align}
\label{eq:zeta_weak_eq_2}
\begin{split}
    \int\limits_0^{x^0} (s_0^x(x)c_0^x(x) & + a(s_0^x(x))) \psi(x, 0) \, dx \\
    & = \int\limits_0^{x^0}  (s_0^x(x)c_0^x(x) + a(s_0^x(x))) \widetilde\psi(\varphi_0(x), x)\,dx,
\end{split}
\end{align}
\begin{equation}
\label{eq:zeta_weak_eq_3}
    \int\limits_{x^0}^\infty (s_0^x(x)c_0^x(x) + a(c_0^x(x))) \psi(x, 0) \, dx = 
    \int\limits_{x^0}^\infty  a(c_0^x(x)) \widetilde\psi(\varphi_0(x^0), x)\,dx.
\end{equation}
We have $s_0^x(x)=0$ in \eqref{eq:zeta_weak_eq_3} due to (S1).
Introducing the initial values for $\zeta$ given by \eqref{eq:U_zeta_weak_initial_values} and using the relation
\begin{equation}
\label{eq:varphi_0_t}
\varphi(0, t) = \int\limits_0^t f (s_0^t(\tau), c_0^t(\tau))\, d \tau,
\end{equation}
derived from \eqref{eq:phi}, we obtain
\begin{equation}
\label{eq:zeta_weak_eq_4}
    \int\limits_0^\infty c_0^t(t)f(s_0^t(t), c_0^t(t)) \psi(0, t) \, dt = 
    \int\limits_0^\infty \zeta_0^\varphi(\varphi)\widetilde\psi(\varphi, 0) \, d\varphi.
\end{equation}
Adding up the equations \eqref{eq:zeta_weak_eq_1}, \eqref{eq:zeta_weak_eq_2}, \eqref{eq:zeta_weak_eq_3} and \eqref{eq:zeta_weak_eq_4}, in view of \eqref{eq:weak-c-eq} we derive \eqref{eq:zeta_weak}.

To obtain \eqref{eq:U_weak}, we consider the Stokes identity
\begin{equation}
\label{eq:U_weak_eq_1}
\iint\limits_{Q_{orig}}\psi_x \,dt \,dx = \int\limits_{\partial Q_{orig}}\!\!\!\!\psi n_x \,d\sigma.
\end{equation}
We derive the following simple identity by adding and subtracting the same term:
\[
\psi_x = \left(\frac1{f}\left(\psi_x + \frac{s}{f}\psi_t\right) - \frac{s}{f}\frac1f\psi_t\right) f.
\]
With this identity, applying \eqref{eq:def-U-F} and \eqref{eq:tilde_psi_derivatives}, we transform the left-hand side of \eqref{eq:U_weak_eq_1} in terms of $\mathcal U$ and $\zeta$ as follows
\begin{equation}
\label{eq:U_weak_main_term}
\iint\limits_{Q_{orig}}\psi_x \,dt \,dx =
\iint\limits_{Q_{lagr}} \mathcal U \widetilde \psi_x + \mathcal F(\mathcal U, \zeta) \widetilde \psi_\varphi \,d\varphi\,dx.
\end{equation}
Since on the boundary of $Q_{orig}$ corresponding to $t_0(x)$ we have
\[
n_x = \dfrac{t'_0(x)}{\sqrt{1 + (t'_0(x))^2}} \quad\text{and}\quad
d\sigma = \sqrt{1 + (t'_0(x))^2} \, dx,
\]
the right-hand side of \eqref{eq:U_weak_eq_1} could be clarified as
\[
\int\limits_{\partial Q_{orig}}\!\!\!\!\psi n_x \,d\sigma = -\int\limits_0^\infty\psi(0, t)\,dt + \int\limits_{x_0}^\infty \psi(x, t_0(x)) t'_0(x)\,dx.
\]
For the first term, using \eqref{eq:varphi_0_t} and the definition \eqref{eq:U_zeta_weak_initial_values} of $\mathcal U_0^\varphi$ we derive
\[
\int\limits_0^\infty\psi(0, t)\,dt = \int\limits_0^\infty\mathcal U_0^\varphi(\varphi) \widetilde \psi(\varphi, 0)\,d\varphi.
\]
For the second term, since $t_0(x)$ is composed of shocks with $s(x+0, t_0(x)) = 0$ due to Lemma \ref{lemma:t0_is_shocks}, a straightforward computation of the Rankine--Hugoniot condition \eqref{eq:RH-1} shows that
$$t'_0(x) = \frac{s(x-0, t_0(x))}{f(s(x-0, t_0(x)), c(x, t_0(x)))},$$
and using \eqref{eq:def-U-F} and Proposition \ref{prop:c_on_zero_boundary}
we obtain
$$t'_0(x) = -\mathcal F(\mathcal U(\varphi_0(x^0)+0, x), c_0^x(x)).$$
Therefore, we have
\begin{align}
\label{eq:psi_nx}
\begin{split}
\int\limits_{\partial Q_{orig}}\!\!\!\!\psi n_x \,d\sigma = &-\int\limits_0^\infty\mathcal U_0^\varphi(\varphi) \widetilde \psi(0,\varphi)\,d\varphi \\
&- \int\limits_{x_0}^\infty \mathcal F(\mathcal U(\varphi_0(x^0)+0, x),  c_0^x(x)) \widetilde \psi(\varphi_0(x^0), x) \,dx.
\end{split}
\end{align}
Adding up the equations \eqref{eq:U_weak_eq_1}, \eqref{eq:U_weak_main_term} and \eqref{eq:psi_nx} we arrive at \eqref{eq:U_weak}.

Note that \eqref{eq:U_weak} is missing a boundary integral corresponding to the $\varphi_0(x)$ part of the boundary. That is because
\[
\int\limits_0^{x^0} \left(-\mathcal U(\varphi_0(x), x) \varphi'_0(x) + \mathcal F(\mathcal U(\varphi_0(x), x), \zeta(\varphi_0(x), x))\right) \widetilde{\psi}(\varphi_0(x), x) \, dx = 0
\]
due to the definition of the initial-boundary values on this boundary, since due to \eqref{eq:def_varphi_0} and \eqref{eq:def-U-F} for every $x\in[0, x^0]$ we have
\[
\varphi'_0(x) = -s_0^x(x), \quad \mathcal U(\varphi_0(x), x) = \dfrac{1}{f(s_0^x(x), c_0^x(x))},
\]
\[
\mathcal F(\mathcal U(\varphi_0(x), x), \zeta(\varphi_0(x), x)) = \dfrac{-s_0^x(x)}{f(s_0^x(x), c_0^x(x))},
\]
and therefore
\[
-\mathcal U(\varphi_0(x), x) \varphi'_0(x) + \mathcal F(\mathcal U(\varphi_0(x), x), \zeta(\varphi_0(x), x)) = 0.
\]
\end{proof}

\begin{remark}
The same reasoning as in \cite[Lemma 2.2.1]{Serre1} shows that on every shock the equations in the weak form \eqref{eq:U_weak} and \eqref{eq:zeta_weak} result in the Rankine--Hugoniot condition
\begin{equation} 
\label{eq:RH-2}
\begin{split}
    v^*[\mathcal U]&=[\mathcal F(\mathcal U,\zeta)],
    \\
    v^*[\zeta]&=[a(\zeta)],
\end{split}
\end{equation} 
where $v^*$ is the velocity of the shock between states $(\mathcal U^-, \zeta^-)$ and $(\mathcal U^+, \zeta^+)$. Here, like in original coordinates, $[q(\mathcal U, \zeta)]=q(\mathcal U^+, \zeta^+)-q(\mathcal U^-, \zeta^-)$.
\end{remark}

Properties of the new flow function $\mathcal F$ (see Fig.~\ref{fig:BLf_lagr}) that correspond to the properties (F1)--(F4) of the function $f$ are listed below.
\begin{figure}[htbp]
    \centering
    \includegraphics[width=0.55\textwidth]{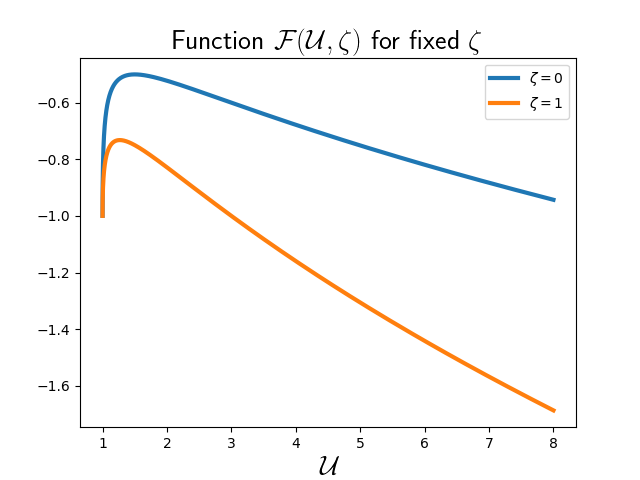}
    \caption{The function $\mathcal F(\mathcal U,\zeta)$ corresponding to the flow function $f(s, c)$ plotted in Fig \ref{fig:BL_ads} (a).}
    \label{fig:BLf_lagr}
\end{figure}
\begin{proposition}
\label{prop:new-flow-function-properties}
For all $\zeta\in[0,1]$ the following properties of the function $\mathcal F$ are fulfilled
\begin{itemize}
\item[($\mathcal F$1)]
\begin{itemize}
\item[$\bullet$]
$\mathcal F \in \mathcal C^2([1,+\infty)\times[0,1])$; 
\item[$\bullet$]
$\mathcal F(\mathcal U,\zeta) < 0$ for all $\,\mathcal U\in [1,+\infty)$;
\item[$\bullet$]
$\mathcal F(1, \zeta) = - 1$;
\item[$\bullet$]
$\lim\limits_{\mathcal U\to\infty}\mathcal{F}(\mathcal U, \zeta) = -\infty$;
\end{itemize}
 \medskip
\item[($\mathcal F$2)]
\begin{itemize}
\item[$\bullet$]
$\lim\limits_{\mathcal U\to1}\mathcal{F}_{\mathcal U}(\mathcal U, \zeta) = +\infty$;
\item[$\bullet$]
$\lim\limits_{\mathcal U\to\infty}\mathcal{F}_\mathcal{U}(\mathcal U, \zeta) = 0$;
\item[$\bullet$]
there exists a unique global maximum $\mathcal U^{\max}(\zeta)$;
\item[$\bullet$]
 $\mathcal F_{\mathcal U}(\mathcal U, \zeta) > 0$ for $\mathcal U \in (1, \mathcal U^{\max}(\zeta))$; 
 \item[$\bullet$]
 $\mathcal F_{\mathcal U}(\mathcal U, \zeta) < 0$ for
 $\mathcal U \in (\mathcal U^{\max}(\zeta), +\infty)$;
 \end{itemize}
 \medskip
 \item[($\mathcal F$3)]
 \begin{itemize}
\item[$\bullet$] 
function $\mathcal F(\cdot,\zeta)$ has a unique point of inflection $\mathcal U^I(\zeta) \in (1, +\infty)$, such that $\mathcal F_{\mathcal U\mathcal U} (\mathcal U, \zeta) < 0$ for $1 < \mathcal U < \mathcal U^I(\zeta)$ and $\mathcal F_{\mathcal U\mathcal U} (\mathcal U, \zeta) > 0$ for $\mathcal U > \mathcal U^I(\zeta)$. \item[$\bullet$] $\mathcal U^{I}(\zeta) > \mathcal U^{\max}(\zeta)$;
\item[$\bullet$]
therefore, if $\mathcal U$, $\zeta$ are such that $\mathcal F_{\mathcal U} (\mathcal U, \zeta) > 0$, then $\mathcal F_{\mathcal U\mathcal U} (\mathcal U, \zeta) < 0$;
\end{itemize}
\item[($\mathcal F$4)]
function 
$\mathcal F$ is monotone in $\zeta$, i.e.  for all $\mathcal U\in(1, +\infty)$ one of the following properties holds true 
\begin{itemize}
\item[($\mathcal F$4.1)]
$\mathcal F_\zeta(\mathcal U, \zeta) < 0$ in the case (F4.1), 
\item[($\mathcal F$4.2)]
$\mathcal F_\zeta(\mathcal U, \zeta) > 0$ in the case (F4.2).
\end{itemize}
\end{itemize}
\end{proposition}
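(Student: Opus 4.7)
}
The backbone of the argument is to invert the defining relation $\mathcal U=1/f(s,\zeta)$ in the $s$-variable. Since (F1)--(F2) give $f(\cdot,\zeta)\in\mathcal C^2$ strictly increasing from $0$ to $1$ on $[0,1]$, for each $\mathcal U\in(1,+\infty)$ there is a unique $s=s(\mathcal U,\zeta)\in(0,1)$ with $f(s,\zeta)=1/\mathcal U$, and the implicit function theorem produces $s\in\mathcal C^2\big((1,+\infty)\times[0,1]\big)$. Then $\mathcal F(\mathcal U,\zeta)=-s(\mathcal U,\zeta)\,\mathcal U$ inherits this $\mathcal C^2$ regularity, which takes care of $(\mathcal F1)$ away from $\mathcal U=1$. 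The value $\mathcal F(1,\zeta)=-1$ comes from $s(1,\zeta)=1$, and the limit $\mathcal F\to-\infty$ as $\mathcal U\to\infty$ comes from the Taylor expansion $f(s,\zeta)\sim\tfrac12 f_{ss}(0,\zeta)s^2$ near $s=0$, which by (F2)--(F3) has a positive leading coefficient, so $\mathcal F=-s/f\sim -2/(f_{ss}(0,\zeta)\,s)\to-\infty$.

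For $(\mathcal F2)$ and $(\mathcal F3)$ I would differentiate implicitly: $f(s,\zeta)=1/\mathcal U$ gives $s_\mathcal U=-1/(\mathcal U^2 f_s)$, whence
\[
\mathcal F_\mathcal U = -s - \mathcal U s_\mathcal U = \frac{f-sf_s}{f_s},\qquad
\mathcal F_{\mathcal U\mathcal U} = \frac{f^3 f_{ss}}{f_s^3},
\]
the second identity obtained by differentiating $f/f_s-s$ once more in $\mathcal U$ and using $1/\mathcal U^2=f^2$. Introduce the auxiliary function $T(s,\zeta):=f-sf_s$. Then $T(0,\zeta)=0$, $T(1,\zeta)=1$, and $T_s=-sf_{ss}$; by (F3), $T$ decreases on $(0,s^I(\zeta))$ and increases on $(s^I(\zeta),1)$, so $T$ has a unique zero $s^*(\zeta)\in(s^I(\zeta),1)$ with $T<0$ on $(0,s^*)$ and $T>0$ on $(s^*,1)$. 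Since $s\mapsto\mathcal U(s,\zeta)$ is strictly decreasing, $s^*(\zeta)$ corresponds to the unique maximum $\mathcal U^{\max}(\zeta)\in(1,+\infty)$ of $\mathcal F(\cdot,\zeta)$ and $s^I(\zeta)$ to its unique inflection $\mathcal U^I(\zeta)$, with $\mathcal U^I(\zeta)>\mathcal U^{\max}(\zeta)$ because $s^I<s^*$. The sign statements of $(\mathcal F2)$--$(\mathcal F3)$, including the implication $\mathcal F_\mathcal U>0\Rightarrow\mathcal F_{\mathcal U\mathcal U}<0$, follow by matching intervals. The two endpoint limits of $\mathcal F_\mathcal U$ come from $T(1,\zeta)=1$ with $f_s(1,\zeta)=0$ (yielding $+\infty$) and from $f/f_s\sim s/2$ near $s=0$ (yielding $0^-$).

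For $(\mathcal F4)$, differentiating $f(s,\zeta)=1/\mathcal U$ in $\zeta$ at fixed $\mathcal U$ gives $s_\zeta=-f_c/f_s$, hence
\[
\mathcal F_\zeta = -\mathcal U\, s_\zeta = \mathcal U\,\frac{f_c}{f_s},
\]
whose sign coincides with the sign of $f_c$ because $\mathcal U>0$ and $f_s>0$. The two cases of (F4) then translate line by line into $(\mathcal F4.1)$ and $(\mathcal F4.2)$.

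The only genuinely delicate point will be the boundary $\mathcal U=1$: the implicit function theorem degenerates there because $f_s(1,\zeta)=0$, so I would interpret the $\mathcal C^2$ claim in $(\mathcal F1)$ as $\mathcal C^2$-smoothness in the open half-plane $(1,+\infty)\times[0,1]$ together with a continuous extension to $\{\mathcal U=1\}$, and quantify the blow-up $\mathcal F_\mathcal U(1^+,\zeta)=+\infty$ via the quadratic expansion $1-f\sim\tfrac12|f_{ss}(1,\zeta)|(1-s)^2$ provided by (F2)--(F3). Outside of this boundary subtlety the entire argument reduces to the three explicit quotients above and to the elementary one-variable analysis of $T(\cdot,\zeta)$.
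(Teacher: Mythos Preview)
Your approach is essentially identical to the paper's: you derive the same formulas $\mathcal F_{\mathcal U}=f/f_s-s$ and $\mathcal F_{\mathcal U\mathcal U}=f^3 f_{ss}/f_s^3$, introduce the same auxiliary function $T=f-sf_s$ (the paper calls it $\psi$), and read off the sign structure in the same way. Your treatment of $(\mathcal F4)$ via $\mathcal F_\zeta=\mathcal U f_c/f_s$ is in fact more explicit than the paper's ``straightforward computations''.

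One small technical gap: your endpoint asymptotics $f\sim\tfrac12 f_{ss}(0,\zeta)s^2$ and $f/f_s\sim s/2$ presuppose $f_{ss}(0,\zeta)>0$, but (F3) only asserts $f_{ss}>0$ on the open interval $(0,s^I)$, so $f_{ss}(0,\zeta)$ could vanish. The paper sidesteps this by using L'H\^opital for $\mathcal F\to-\infty$ and a squeeze $0<f/f_s<s$ (which follows directly from your own observation $T<0$ on $(0,s^*)$) for $\mathcal F_{\mathcal U}\to 0$. Both fixes slot into your argument with no structural change.
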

\begin{proof}
To obtain the last identity  of ($\mathcal F$1) we apply L'H\^{o}pital's rule to the definition~\eqref{eq:def-U-F} and use property (F2). The rest of ($\mathcal F$1) immediately follows from the definition \eqref{eq:def-U-F}.  Note that, for a fixed $c=\zeta$ there is a one-to-one correspondence between $s$ and $\mathcal U$, so we can put $\mathcal U = \mathcal U(s)$. 
Due to the chain rule we have
\begin{equation*}
    \mathcal{F}_{\mathcal{U}}(\mathcal{U}(s), \zeta) = \frac{d}{ds}\mathcal F(\mathcal{U}(s), \zeta)\left(\frac{d}{ds}\mathcal U(s)\right)^{-1}.
\end{equation*}
Hence, using \eqref{eq:def-U-F}, we derive
\begin{equation} \label{eq:dF_U}
    \mathcal F_\mathcal U(\mathcal U, \zeta) = \frac{f(s, c)}{f_s(s,c)} - s.
\end{equation}
The function $\psi(s,c) = f(s, c) - s f_s(s, c)$ is zero at $s=0$. Its derivative admits $\psi_s(s,c) = sf_{ss}(s, c)$, which means that due to (F3) the function $\psi(\cdot, c)$ decreases on $(0, s^I(c))$ and increases on $(s^I(c), 1)$. Since $\psi(1, c) = 1$, the function $\psi(\cdot, c)$ has exactly one positive root $s^{wf}(c)\in(s^I(c), 1)$ ($wf$ stands for ``water front'', since this concentration arises at the front of certain Riemann problem solutions).
Due to  \eqref{eq:dF_U} and (F2) the value $\mathcal U^{\max}(\zeta)$ corresponding to $s^{wf}(c)$ is the global maximum point of $\mathcal F(\cdot, \zeta)$ for every $\zeta\in[0,1]$, and the inequalities in ($\mathcal F$2) are fulfilled.
Moreover, we see that for $0 < s< s^I(c)$
\[
f(s,c) < sf_s(s,c),
\]
 and hence,
\[
0< \frac{f(s,c)}{f_s(s,c)} < s.
\] 
Thus, the squeeze theorem yields
\[
\lim\limits_{\mathcal U\to\infty}\mathcal{F}_\mathcal{U}(\mathcal U(s), \zeta) = \lim\limits_{s\to 0}\frac{f(s,c)}{f_s(s,c)} - s = 0.
\]
Since $\mathcal U = 1$ corresponds to $s=1$, 
 we obtain $\lim\limits_{\mathcal U\to1}\mathcal{F}_{\mathcal U}(\mathcal U, \zeta) = +\infty$ from \eqref{eq:dF_U} and (F2).

Again applying the chain rule we derive the following identity
\begin{equation*}
    \mathcal{F}_{\mathcal{UU}}(\mathcal U(s), \zeta) = \frac{d}{ds}\mathcal F_{\mathcal U}(\mathcal U(s), \zeta)\left(\frac{d}{ds}\mathcal U(s)\right)^{-1}.
\end{equation*}
Using \eqref{eq:def-U-F} and \eqref{eq:dF_U}, we obtain
\begin{equation*}
    \mathcal{F}_{\mathcal{UU}}(\mathcal U(s), \zeta) = f_{ss}(s, c)\frac{f^3(s, c)}{f_s^3(s, c)}.
\end{equation*}
Due to (F1) and (F2) the sign of $\mathcal{F}_{\mathcal{UU}}(\mathcal U(s), \zeta)$ coincides with one of  $f_{ss}(s, c)$. According to (F3) function $f$ has a unique inflection point, therefore so does $\mathcal F$.
Since $0<s^I(c)<s^{wf}(c)<1$, the corresponding values in the Lagrange coordinates are in reverse order $1 < \mathcal U^{\max}(\zeta) <\mathcal U^I(\zeta) < +\infty$ and thus 
the inflection cannot happen while $\mathcal F_{\mathcal U}(\mathcal U, \zeta) > 0$ which proves the last assertion in ($\mathcal F$3). Straightforward computations shows  that property (F4) results in ($\mathcal F$4).
\end{proof}

\section{Entropy conditions in Lagrange coordinates}
\label{sec:entropy}

\subsection{Mapping shocks to Lagrange coordinates}
When constructing the solution in Lagrange coordinates, we need to discern which shocks are admissible. One possible approach (see e.g. \cite{Shen}) is to use the vanishing viscosity method directly for the system \eqref{eq:U-lagr-eqn}--\eqref{eq:c-lagr-eqn} to establish admissibility criteria for the shocks. However, we consider that approach flawed. It adds second-order terms into the equation system in Lagrange coordinates, but those terms lack any kind of physical significance. In some simple cases it could be argued that the set of admissible shocks is the same regardless of how we add dissipative terms, but it was shown in \cite{Bahetal} that even for the original system with a more complex dependence of the flux function on $c$ we have different admissible shocks for different ratios of dissipative parameters. Therefore, the best approach is to choose the most physically meaningful form of dissipative terms to tie the admissibility of shocks to something that could be experimentally measured in real systems.

That is why we established admissibility in original coordinates with dissipative system \eqref{eq:main_system_dissipative}. Now, in order to define admissibility in Lagrange coordinates, we use \eqref{eq:def-U-F} to construct a map between shocks in original coordinates and shocks in Lagrange coordinates. Only shocks corresponding to vanishing viscosity admissible shocks in original coordinates will be considered admissible in Lagrange coordinates. This mapping goes as follows. 

Consider a shock in original coordinates at the point $(x_1, t_1)$ with values
\[
s^\pm = s(x_1\pm 0, t_1), \quad c^\pm = c(x_1\pm 0, t_1).
\]
Denote $\theta_c(s) = \frac{1}{f(s,c)}$ and $\vartheta_c = \theta_c^{-1}$ its inverse function with respect to its argument $s$. Using these functions we map
\[
\mathcal U^{[+]} = \theta_{c^+}(s^+), \quad \mathcal U^{[-]} = \theta_{c^-}(s^-), \quad \zeta^{[+]} = c^+, \quad \zeta^{[-]} = c^-,
\]
\[s^+ = \vartheta_{\zeta^{[+]}}(\mathcal U^{[+]}), \quad s^- = \vartheta_{\zeta^{[-]}}(\mathcal U^{[-]}), \quad c^+ = \zeta^{[+]}, \quad c^- = \zeta^{[-]}. 
\]
Note, that in the original coordinates values $s^\pm$ correspond to $x\to x_1\pm 0$ respectively. The shock velocity in original coordinates is always positive due to Proposition \ref{prop:inadmissible_shocks}, therefore, $s^\pm$ correspond to $t\to t_1\mp 0$:
\[
s^\pm = s(x_1, t_1 \mp 0), \quad c^\pm = c(x_1, t_1\mp 0).
\]
Further, due to \eqref{eq:dPhi} and (F1), when $x = x_1$ is fixed, $t\to t_1\mp 0$ correspond to $\varphi\to \varphi_1\mp 0$ for the point $(\varphi_1, x_1)$ on a corresponding shock in Lagrange coordinates, and so do $\mathcal U^{[\pm]}$:
\[
\mathcal U^{[\pm]} = \mathcal U(\varphi_1 \mp 0, x_1), \quad \zeta^{[\pm]} = \zeta(\varphi_1 \mp 0, x_1).
\]
But for the equations \eqref{eq:U-lagr-eqn}, \eqref{eq:c-lagr-eqn} in Lagrange coordinates the $x$ axis plays the role of time and $\varphi$ the role of space, so we would like to denote $\mathcal U^\pm$ to correspond to $\varphi\to \varphi_1\pm 0$. Thus, we denote
\[
\mathcal U^+ = \mathcal U^{[-]}, \quad \mathcal U^- = \mathcal U^{[+]}, \quad \zeta^+ = \zeta^{[-]}, \quad \zeta^- = \zeta^{[+]},
\]
\[
\mathcal U^{\pm} = \mathcal U(\varphi_1 \pm 0, x_1), \quad \zeta^{\pm} = \zeta(\varphi_1 \pm 0, x_1),
\]
and obtain a one-to-one mapping of shocks in original and Lagrange coordinates:
\begin{equation}\label{shock_mapping}
(\mathcal U^\pm, \zeta^\pm) \to (\vartheta_{\zeta^\mp}(\mathcal U^\mp), \zeta^\mp), \quad (s^\pm, c^\pm) \to (\theta_{c^\mp}(s^\mp), c^\mp).
\end{equation}

\subsection{Oleinik, Lax and entropy admissibility for $\mathcal U$-shocks}
\label{sec:sec2-s-shock-admissibility}
Consider an $s$-shock in original coordinates, i.e. a shock with no change in $c = c^+ = c^-$. The system \eqref{eq:dyn_sys_cap_diff} for this case simplifies into one equation
\[
s_\xi = f(s, c) - f(s^-, c) - v (s - s^-) =: \Psi(s).
\]
For this equation $s^\pm$ are critical points due to the Rankine--Hugoniot condition \eqref{eq:RH-1}. And the existence of a trajectory connecting these critical points depends on the sign of the right-hand side of the equation between the critical points. For the trajectory to exist it is necessary and sufficient that
\begin{equation}\label{Oleinik_admissibility}
\Psi(s)(s^+-s^-) \geqslant 0 \quad \text{ for all } s \text{ between } s^+ \text{ and } s^-.
\end{equation}
This condition is called Oleinik's entropy condition or E-condition \cite{Oleinik}. Note that since $f$ is $S$-shaped, it is impossible for $\Psi(s)$ to be $0$ inside the interval without changing sign, therefore, the sign in this condition could be interpreted as strict without loss of generality. Note also that $\Psi_s(s) = f_s(s, c) - v$. Therefore, for the current problem this criterion is equivalent to the following pair of conditions:
\begin{itemize}
    \item $\Psi(s) \neq 0$ for all $s$ between $s^+$ and $s^-$;
    \item $f_s(s^+, c) \leqslant v \leqslant f_s(s^-, c)$, but both signs cannot be equal at the same time.
\end{itemize}
The inequalities in the second condition of the pair are known as Lax's shock condition.

Now, we'd like to transfer these conditions to Lagrange coordinates, obtaining similar inequalities for $\mathcal F(\mathcal U, \zeta)$, $\zeta = c$ and
\[
\Psi^*(\mathcal U) = \mathcal F(\mathcal U, \zeta) - \mathcal F(\mathcal U^-, \zeta) - v^*(\mathcal U - \mathcal U^-),
\]
\[
v^* = 
\begin{cases}
\dfrac{\mathcal F(\mathcal U^-, \zeta) - \mathcal F(\mathcal U^+, \zeta)}{\mathcal U^- - \mathcal U^+}, & \mathcal U^- < +\infty,\\
0, & \mathcal U^- = +\infty.
\end{cases}
\]
Similar to the original coordinates, $\mathcal U^\pm$ are zeroes of $\Psi^*$ due to the exact formula for $v^*$ obtained from Rankine--Hugoniot conditions \eqref{eq:RH-2}.

\begin{lemma}
\label{lemma_Oleinik_equivalence}
Lax condition for an $s$-shock in original coordinates is equivalent to Lax condition for the corresponding $\mathcal U$-shock in Lagrange coordinates:
\begin{equation}\label{eq:Lax-Lagrange}
\mathcal F_{\mathcal U}(\mathcal U^+, \zeta) \leqslant v^* \leqslant \mathcal F_{\mathcal U}(\mathcal U^-, \zeta),
\end{equation}
and both signs can only be equal at the same time when $\mathcal U^- = +\infty$.

Moreover, $\Psi^*(\mathcal U) \neq 0$ if and only if for the corresponding $s$ we have $\Psi(s) \neq 0$, therefore the Oleinik E-condition is also equivalent for $s$-shocks in original coordinates and $\mathcal U$-shocks in Lagrange coordinates.
\end{lemma}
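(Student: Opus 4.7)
The approach is to exploit the explicit product form $\mathcal{F}(\mathcal{U},\zeta) = -s\,\mathcal{U}$ and the identity $\mathcal{F}_{\mathcal{U}}(\mathcal{U},\zeta) = f(s,c)/f_s(s,c) - s$ (equation \eqref{eq:dF_U}) to reduce both the Lax and Oleinik equivalences to direct algebraic identities. Since for an $s$-shock $\zeta^+ = \zeta^- = c$, the mapping \eqref{shock_mapping} gives $\mathcal{U}^\pm = 1/f(s^\mp,c)$, and (F2) makes $s \mapsto 1/f(s,c)$ strictly decreasing on $(0,1)$, so it is a bijection onto $(1,+\infty)$ that (with reversal of orientation) preserves the ``between'' relation linking the two sides of the shock.

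First I would compute $v^*$ in closed form. Substituting $\mathcal{F}(\mathcal{U}^\pm,\zeta) = -s^\mp\mathcal{U}^\pm$ into the definition of $v^*$, clearing denominators and applying the original Rankine--Hugoniot $v(s^+ - s^-) = f(s^+,c) - f(s^-,c)$, a short calculation yields $v^* = f(s^\pm,c)/v - s^\pm$, where both choices of sign produce the same value. This extends continuously as $s^+ \to 0$ to $v^* = 0$, matching the separate definition for $\mathcal{U}^- = +\infty$. For the Lax equivalence, subtracting $v^*$ from $\mathcal{F}_{\mathcal{U}}(\mathcal{U}^\pm,\zeta)$ via \eqref{eq:dF_U} then produces the factorisations
\[
\mathcal{F}_{\mathcal{U}}(\mathcal{U}^+,\zeta) - v^* = \frac{f(s^-,c)\,(v - f_s(s^-,c))}{v\,f_s(s^-,c)}, \quad v^* - \mathcal{F}_{\mathcal{U}}(\mathcal{U}^-,\zeta) = \frac{f(s^+,c)\,(f_s(s^+,c) - v)}{v\,f_s(s^+,c)},
\]
so for $s^\pm \in (0,1)$ the signs coincide with those in the original Lax inequalities. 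The boundary cases are handled separately: $s^- = 1$ forces the left inequality to fail in both systems (since $\mathcal{F}_{\mathcal{U}}(1,\zeta) = +\infty$ and $f_s(1,c) = 0$); and $s^+ = 0$ produces $v^* = 0 = \mathcal{F}_{\mathcal{U}}(+\infty,\zeta)$, giving an automatic equality on the right of \eqref{eq:Lax-Lagrange}. Because the original Lax condition rules out simultaneous equalities, the only way both Lagrange inequalities become equalities is precisely $\mathcal{U}^- = +\infty$.

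For the Oleinik condition a parallel computation, expanding $\Psi^*(\mathcal{U}) = \mathcal{F}(\mathcal{U},\zeta) - \mathcal{F}(\mathcal{U}^-,\zeta) - v^*(\mathcal{U} - \mathcal{U}^-)$ via $\mathcal{F}(\mathcal{U},\zeta) = -s\mathcal{U}$, clearing the common factor $1/f(s,c)$, and using the two equivalent expressions for $v^*$ derived above, collapses to the identity
\[
\Psi^*(\mathcal{U}) = \frac{\Psi(s)}{v\,f(s,c)} \qquad \text{for } \mathcal{U} = 1/f(s,c).
\]
Since $v > 0$ and $f(s,c) > 0$ for $s > 0$, the functions $\Psi^*$ and $\Psi$ share the same zero set and the same sign on the interior of the shock interval, and the monotone bijection $s \mapsto 1/f(s,c)$ transfers the ``betweenness'' of $s$ with respect to $(s^+, s^-)$ to the corresponding betweenness of $\mathcal{U}$ with respect to $(\mathcal{U}^+, \mathcal{U}^-)$. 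This yields the equivalence of the Oleinik E-condition in the two coordinate systems.

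The main obstacle I anticipate is the careful bookkeeping of the degenerate endpoint cases $s^\pm \in \{0,1\}$, where the formulas involve infinities or zeros and must be interpreted as limits; these cases are precisely where the clause ``$\mathcal{U}^- = +\infty$'' in the statement becomes unavoidable. Away from them, the whole argument reduces to transparent algebra enabled by the fortuitous product structure $\mathcal{F} = -s\mathcal{U}$.
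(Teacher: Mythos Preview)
Your proposal is correct and proceeds by the same underlying algebra as the paper's proof, though organized more cleanly: you compute $v^* = f(s^\pm,c)/v - s^\pm$ explicitly and derive the factorizations and the identity $\Psi^*(\mathcal U) = \Psi(s)/(v f(s,c))$, whereas the paper transforms the Lax inequalities step by step and, for the Oleinik part, only shows the zero sets of $\Psi$ and $\Psi^*$ coincide via a collinearity argument. Your identity gives the sign correspondence directly, which is a mild improvement; otherwise the two arguments are equivalent, including the separate treatment of the degenerate endpoint $s^+ = 0$.
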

\begin{proof}
First, we consider the case $s^+ \neq 0$. Let's prove the second inequality, the proof of the first one is similar. Assume we have 
\begin{equation}\label{eq:half-inequality-orig}
f_s(s^+, c) \leqslant v = \dfrac{f(s^-, c)-f(s^+, c)}{s^--s^+}.
\end{equation}
Using \eqref{eq:def-U-F} and \eqref{eq:dF_U} we substitute $s^\pm$, $f$ and $f_s$ into this inequality:
\[
\dfrac{1}{\mathcal F_{\mathcal U}(\mathcal U^-, \zeta)\mathcal U^-  - \mathcal F(\mathcal U^-, \zeta)} \leqslant \dfrac{\mathcal U^- - \mathcal U^+}{\mathcal F(\mathcal U^-, \zeta) \mathcal U^+ - \mathcal F(\mathcal U^+, \zeta)\mathcal U^-}.
\]
Through simple transformations we arrive at
\begin{equation}\label{eq:half-inequality-lagr}
\mathcal F_{\mathcal U}(\mathcal U^-, \zeta) \geqslant \dfrac{\mathcal F(\mathcal U^-, \zeta) - \mathcal F(\mathcal U^+, \zeta)}{\mathcal U^- - \mathcal U^+}.
\end{equation}
This transformations and substitutions work in reverse, therefore the inequalities \eqref{eq:half-inequality-orig} in original and \eqref{eq:half-inequality-lagr} in Lagrange coordinates are equivalent. Also, they can only become equalities simultaneously, therefore inequalities in \eqref{eq:Lax-Lagrange} cannot both be equalities when $\mathcal U^- \neq +\infty$. 

Similarly, if we have $\Psi(s) = 0$, we rewrite it as
\[
\dfrac{f(s, c)-f(s^-, c)}{s-s^-} = \dfrac{f(s^+, c)-f(s^-, c)}{s^+-s^-}.
\]
Substituting $s$, $s^\pm$ and $f$ in this relation we arrive at
\[
\dfrac{\mathcal U^+ - \mathcal U}{\mathcal F(\mathcal U^+, \zeta) \mathcal U - \mathcal F(\mathcal U, \zeta)\mathcal U^+} = \dfrac{\mathcal U^+ - \mathcal U^-}{\mathcal F(\mathcal U^-, \zeta) \mathcal U^+ - \mathcal F(\mathcal U^+, \zeta)\mathcal U^-}.
\]
This transforms into
\[
\dfrac{\mathcal F(\mathcal U, \zeta) - \mathcal F(\mathcal U^+, \zeta)}{\mathcal U - \mathcal U^+} = \dfrac{\mathcal F(\mathcal U^-, \zeta) - \mathcal F(\mathcal U^+, \zeta)}{\mathcal U^- - \mathcal U^+} = v^*,
\]
therefore, the points $(\mathcal U, \mathcal F(\mathcal U, \zeta))$, $(\mathcal U^+, \mathcal F(\mathcal U^+, \zeta))$ and $(\mathcal U^-, \mathcal F(\mathcal U^-, \zeta))$ are collinear, thus we also have
\[
\dfrac{\mathcal F(\mathcal U, \zeta) - \mathcal F(\mathcal U^-, \zeta)}{\mathcal U - \mathcal U^-} = \dfrac{\mathcal F(\mathcal U^-, \zeta) - \mathcal F(\mathcal U^+, \zeta)}{\mathcal U^- - \mathcal U^+} = v^*,
\]
which means that $\Psi(s)\neq 0$ and $\Psi^*(\mathcal U) \neq 0$ are equivalent.

Now we consider the case $s^+ = 0$, and therefore $\mathcal U^- = +\infty$. 
In this case $v^* = 0$ and due to ($\mathcal F$2) in Proposition \ref{prop:new-flow-function-properties} the derivative $\mathcal F_{\mathcal U}(\mathcal U^-, \zeta) = 0 = v^*$ as well, therefore the second inequality holds. On the other hand we have
\[
f_s(s^-, c) \geqslant v = \dfrac{f(s^-, c)}{s^-},
\]
therefore
\[
\dfrac{1}{\mathcal F_{\mathcal U}(\mathcal U^+, \zeta)\mathcal U^+  - \mathcal F(\mathcal U^+, \zeta)} \geqslant \dfrac{-1}{\mathcal F(\mathcal U^+, \zeta) }.
\]
Transforming this relation we obtain
$\mathcal F_{\mathcal U}(\mathcal U^+, \zeta) \leqslant 0 = v^*$.
Therefore, the first inequality holds, and also due to ($\mathcal F$2) in Proposition \ref{prop:new-flow-function-properties} we have $\mathcal F_{\mathcal U}(\mathcal U, \zeta) < 0$ for all $\mathcal U > \mathcal U^+$, thus $\Psi^*(\mathcal U) \neq 0$ holds for all $\mathcal U > \mathcal U^+$. Note that in this case it is possible to have $\mathcal F_{\mathcal U}(\mathcal U^+, \zeta) = \mathcal F_{\mathcal U}(\mathcal U^-, \zeta) = v^* = 0$, but only when $\mathcal U^+ = \mathcal U^{\max}$.
\end{proof}

\begin{proposition}
\label{prop:boundary_shock_oleinik}
Due to Lemma \ref{lemma_Oleinik_equivalence} on the boundary shock $\{(\varphi_0(x^0), x)| x\geqslant x^0\}$ the value above the shock is $\mathcal U^+(\varphi_0(x^0), x) = \mathcal U(\varphi_0(x^0)+0, x)$, therefore we have
\[
\mathcal F_{\mathcal U}(\mathcal U(\varphi_0(x^0)+0, x), \zeta(\varphi_0(x^0), x)) \leqslant 0, \qquad \forall x\geqslant 0.
\]
\end{proposition}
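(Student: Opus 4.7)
The plan is to derive the inequality as an immediate corollary of Lemma \ref{lemma_Oleinik_equivalence}, once the two sides of the boundary shock are correctly identified in Lagrange coordinates. First I would observe that by the construction of the Lagrange change of variables (see Section \ref{sec2-Lagrange} and Figure \ref{fig:orig-lagr-areas}), the horizontal ray $\{(\varphi_0(x^0), x) : x \geqslant x^0\}$ is the image of the curve $(x, t_0(x))$ for $x > x^0$. By Lemma \ref{lemma:t0_is_shocks}, every point of this curve lies on an admissible shock, and by the definition of $\Omega_0$ together with Corollary \ref{corollary:zero_flow_area} we have $s^+ = s(x+0, t_0(x)) = 0$ and $s^- = s(x-0, t_0(x)) > 0$. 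Proposition \ref{prop:inadmissible_shocks} then forces $c^+ = c^-$, so along the entire boundary we are dealing with a pure $s$-shock.

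Next I would apply the shock mapping \eqref{shock_mapping}. Since $f(0, c^+) = 0$ by (F1), the side with $s^+ = 0$ corresponds to $\mathcal U^- = \theta_{c^+}(0) = +\infty$, while the opposite side yields the finite value $\mathcal U^+ = \theta_{c^-}(s^-)$. Consequently $\mathcal U^+$ is precisely the side approached as $\varphi \to \varphi_0(x^0) + 0$, establishing the identification $\mathcal U^+(\varphi_0(x^0), x) = \mathcal U(\varphi_0(x^0) + 0, x)$ stated in the proposition. Moreover, $c$ does not jump across this shock, and by Proposition \ref{prop:c_on_zero_boundary} its common value coincides with $c_0^x(x)$, so $\zeta(\varphi_0(x^0), x) = c_0^x(x)$ is unambiguous.

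Finally, since admissibility condition (W4) in original coordinates implies the Oleinik E-condition, and for a pure $s$-shock this in turn implies the Lax inequalities $f_s(s^+, c) \leqslant v \leqslant f_s(s^-, c)$, Lemma \ref{lemma_Oleinik_equivalence} transfers these inequalities to Lagrange coordinates. In the degenerate case $\mathcal U^- = +\infty$ analysed in the second part of the proof of that lemma, one obtains $v^* = 0$ together with $\mathcal F_{\mathcal U}(\mathcal U^+, \zeta) \leqslant v^* = 0$, which is exactly the claimed inequality. I do not anticipate any genuine obstacle: the entire content of the proposition amounts to the bookkeeping of which side of the boundary shock carries the finite value of $\mathcal U$ and the specialization, via Lemma \ref{lemma_Oleinik_equivalence}, of the Lax admissibility condition to the case $\mathcal U^- = +\infty$.
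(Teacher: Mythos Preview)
Your proposal is correct and follows exactly the reasoning the paper intends: the proposition is stated in the paper without a separate proof, the phrase ``Due to Lemma \ref{lemma_Oleinik_equivalence}'' being the entire argument, and you have faithfully unpacked that argument by identifying the boundary shock as a pure $s$-shock with $s^+ = 0$, tracking the sign flip in the mapping \eqref{shock_mapping} so that $\mathcal U^- = +\infty$ and $\mathcal U^+ = \mathcal U(\varphi_0(x^0)+0, x)$, and then invoking the $\mathcal U^- = +\infty$ case of Lemma \ref{lemma_Oleinik_equivalence} to obtain $\mathcal F_{\mathcal U}(\mathcal U^+, \zeta) \leqslant v^* = 0$.
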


Following the structure of \cite[Section 2.3]{Serre1} in reverse, we prove that Oleinik's E-condition implies the entropy condition for any convex positive entropy, but in particular for entropy--entropy-flux pairs
$(|\mathcal U - k|, \mathcal G(\mathcal U, k))$, $k\in \mathbb{R}$, where
\[
\mathcal G(\mathcal U, k) = (\mathcal F(\mathcal U, \zeta) - \mathcal F(k, \zeta))\sign(\mathcal U - k).
\]
\begin{lemma}\label{lemma-U-entropy-ineq}
On any admissible $\mathcal U$-shock $\Phi(x)$ inside $Q_{lagr}$ we have
\begin{equation}
\label{entropy_inequality}    
[\mathcal G(\mathcal U, k)] \leqslant \dfrac{d\Phi}{dx} [|\mathcal U - k|], \quad \forall k\in\mathbb{R}.
\end{equation}
\end{lemma}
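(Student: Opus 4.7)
The plan is to argue by a case analysis on where $k$ sits relative to the shock states $\mathcal U^{-}$ and $\mathcal U^{+}$, reducing matters to the two ingredients already at hand: the Rankine--Hugoniot jump relation~\eqref{eq:RH-2} and the Oleinik E-condition in Lagrange coordinates, which is available via Lemma~\ref{lemma_Oleinik_equivalence} from the assumed admissibility of the $\mathcal U$-shock. Since the shock in question is a $\mathcal U$-shock, one has $\zeta^{+}=\zeta^{-}=:\zeta$, and~\eqref{eq:RH-2} reduces to the single relation $v^{*}[\mathcal U]=[\mathcal F(\mathcal U,\zeta)]$ with $v^{*}=d\Phi/dx$; moreover, because the shock lies strictly inside $Q_{lagr}$, both $\mathcal U^{\pm}$ are finite by Corollary~\ref{corollary:zero_flow_area}.

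The first case to dispatch is when $k$ lies outside the closed interval bounded by $\mathcal U^{-}$ and $\mathcal U^{+}$. Here $\sigma:=\sign(\mathcal U^{+}-k)=\sign(\mathcal U^{-}-k)$, so by definition $[\mathcal G(\mathcal U,k)]=\sigma\,[\mathcal F(\mathcal U,\zeta)]$ and $[|\mathcal U-k|]=\sigma\,[\mathcal U]$; then Rankine--Hugoniot turns~\eqref{entropy_inequality} into an equality.

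The substantive case is $k$ strictly between $\mathcal U^{-}$ and $\mathcal U^{+}$. Assuming first $\mathcal U^{-}<k<\mathcal U^{+}$, unpacking the signs gives $[\mathcal G(\mathcal U,k)]=\mathcal F(\mathcal U^{+},\zeta)+\mathcal F(\mathcal U^{-},\zeta)-2\mathcal F(k,\zeta)$ and $[|\mathcal U-k|]=\mathcal U^{+}+\mathcal U^{-}-2k$. Using $\mathcal F(\mathcal U^{+},\zeta)-\mathcal F(\mathcal U^{-},\zeta)=v^{*}(\mathcal U^{+}-\mathcal U^{-})$ to symmetrize both sides, the desired inequality collapses, after elementary cancellation, to
\[
\mathcal F(k,\zeta)-\mathcal F(\mathcal U^{-},\zeta)-v^{*}(k-\mathcal U^{-})\;\geqslant\;0,
\]
which is precisely the Oleinik E-condition applied at the intermediate value $k$ with $\mathcal U^{+}-\mathcal U^{-}>0$. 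The mirror sub-case $\mathcal U^{+}<k<\mathcal U^{-}$ is identical with all signs reversed: there one gets the same reduced quantity with the opposite sign, matched to the opposite sign of $\mathcal U^{+}-\mathcal U^{-}$ in the Oleinik condition.

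I do not foresee a genuine obstacle here: the analytical content has already been packaged in Lemma~\ref{lemma_Oleinik_equivalence}, and what remains is sign bookkeeping. The one point that requires attention is that the direction of the reduced inequality flips with $\sign(\mathcal U^{+}-\mathcal U^{-})$, so the two internal sub-cases must be verified separately and matched against the corresponding form of the E-condition.
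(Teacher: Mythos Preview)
Your proposal is correct and follows essentially the same approach as the paper: both argue by case analysis on the position of $k$ relative to $\mathcal U^{\pm}$, obtain equality from Rankine--Hugoniot when $k$ lies outside the interval, and reduce the intermediate case to the Oleinik E-condition $\Psi^{*}(k)(\mathcal U^{+}-\mathcal U^{-})\geqslant 0$ supplied by Lemma~\ref{lemma_Oleinik_equivalence}. The only minor omission is the endpoint cases $k=\mathcal U^{\pm}$, which the paper notes explicitly give equality; you should mention these for completeness.
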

\begin{proof}
Due to Lemma \ref{lemma_Oleinik_equivalence}, similar to \eqref{Oleinik_admissibility} we have
\[
\Psi^*(\mathcal U) (\mathcal U^+ - \mathcal U^-) \geqslant 0.
\]
Using the fact, that $\mathcal U^\pm$ are zeroes of $\Psi^*$, this inequality could be rewritten the following way:
\[
\Big(\gamma\mathcal F(\mathcal U^-, \zeta) + (1-\gamma)\mathcal F(\mathcal U^+, \zeta) - \mathcal F(\gamma\mathcal U^-+(1-\gamma)\mathcal U^+, \zeta)\Big)\sign(\mathcal U^+ - \mathcal U^-) \leqslant 0
\]
for all $\gamma\in(0,1)$. Therefore, for $k = \gamma\mathcal U^-+(1-\gamma)\mathcal U^+$, noting that $$\mathcal U^+ + \mathcal U^- - 2k = (2\gamma - 1)(\mathcal U^+ - \mathcal U^-) = (2\gamma - 1) [\mathcal U]$$ we have
\begin{align*}
\Big( 
&\mathcal F(\mathcal U^-, \zeta) + \mathcal F(\mathcal U^+, \zeta) - 2 \mathcal F(k, \zeta) \\
{} &- \dfrac{[\mathcal F(\mathcal U, \zeta)]}{[\mathcal U]} (\mathcal U^+ + \mathcal U^- - 2k) \Big) \sign(\mathcal U^+ - \mathcal U^-) \leqslant 0,
\end{align*}
and this relation easily rewrites into \eqref{entropy_inequality} for all $k$ between $\mathcal U^+$ and $\mathcal U^-$ due to Rankine--Hugoniot condition (see \eqref{eq:RH-2})
\begin{equation}
\label{Lagrange_Rankine_Hugoniot}
\dfrac{d\Phi}{dx} = \dfrac{[\mathcal F(\mathcal U, \zeta)]}{[\mathcal U]}.    
\end{equation}
Indeed, we just need to note that since $k$ is between $\mathcal U^\pm$, we have
\[
\sign(\mathcal U^+ - \mathcal U^-) = \sign(\mathcal U^+ - k) = -\sign(\mathcal U^- - k), 
\]
therefore
\begin{align*}
[\mathcal G(\mathcal U, k)] &= (\mathcal F(\mathcal U^+, \zeta) - \mathcal F(k, \zeta))\sign(\mathcal U^+ - k) - (\mathcal F(\mathcal U^-, \zeta) - \mathcal F(k, \zeta))\sign(\mathcal U^- - k) \\
& = \Big(\mathcal F(\mathcal U^+, \zeta) + \mathcal F(\mathcal U^-, \zeta) - 2\mathcal F(k, \zeta)\Big)\sign(\mathcal U^+ - \mathcal U^-), \\
[|\mathcal U - k|] &= (\mathcal U^+ - k)\sign(\mathcal U^+ - k) - (\mathcal U^- - k)\sign(\mathcal U^- - k) \\
&= (\mathcal U^+ + \mathcal U^- - 2k) \sign(\mathcal U^+ - \mathcal U^-).
\end{align*}

On the other hand, when $k > \max(\mathcal U^+, \mathcal U^-)$ or $k < \min(\mathcal U^+, \mathcal U^-)$, we have $\sign(\mathcal U^+-k) = \sign(\mathcal U^- - k)$, therefore, after resolving all signs in \eqref{entropy_inequality} we are left with equality due to Rankine--Hugoniot condition \eqref{Lagrange_Rankine_Hugoniot}. The same thing happens if $k = \mathcal U^+$ or $k=\mathcal U^-$.
\end{proof}

\begin{lemma}\label{lemma-U-entropy-int-ineq}
For all $k\in\mathbb{R}$ and every positive test function $\psi \in \mathcal D^+(Q_{lagr})$ with $\supp \psi$ containing only $\mathcal U$-shocks (and no $\zeta$-shocks), we have the entropy condition
\begin{align}
\label{entropy_integral_inequality}
\begin{split}
0 \leqslant &\iint\limits_{Q_{lagr}} |\mathcal U - k|\psi_x + \mathcal G(\mathcal U, k) \psi_\varphi \,d\varphi \,dx - \iint\limits_{Q_{lagr}} \mathcal F_\zeta(k, \zeta) \zeta_\varphi \sign(\mathcal U-k) \psi \,d\varphi \,dx \\
&{} + \int\limits_0^{x^0} \Big( |\mathcal U(\varphi_0(x), x) - k| s_0^x(x) + \mathcal G(\mathcal U(\varphi_0(x), x), k)\Big) \psi(\varphi_0(x), x) \, dx \\
&{} + \int\limits_{x^0}^\infty \mathcal G(\mathcal U(\varphi_0(x^0)+0, x), k)\psi(\varphi_0(x^0), x)\,dx + \int\limits_0^\infty |\mathcal U(\varphi, 0) - k|\psi(\varphi, 0)\,d\varphi.
\end{split}
\end{align}
\end{lemma}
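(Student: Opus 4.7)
The strategy follows the classical Kru\v{z}kov argument adapted to the piece-wise $\mathcal C^1$ setting. Since by assumption $\supp\psi$ contains only $\mathcal U$-shocks and $\zeta$ stays $\mathcal C^1$ across them, $\supp\psi$ is split by a finite collection of $\mathcal C^1$-smooth shock curves $\varphi=\Phi_j(x)$ into finitely many open domains $\Omega_i\subset Q_{lagr}$ on whose closures $(\mathcal U,\zeta)$ is $\mathcal C^1$ and satisfies \eqref{eq:U-lagr-eqn}--\eqref{eq:c-lagr-eqn} classically. Inside each $\Omega_i$, multiplying \eqref{eq:U-lagr-eqn} by $\sign(\mathcal U-k)$ and expanding $\mathcal G(\mathcal U,k)_\varphi$ via the chain rule yields the pointwise entropy identity
\[
|\mathcal U-k|_x+\mathcal G(\mathcal U,k)_\varphi+\mathcal F_\zeta(k,\zeta)\zeta_\varphi\sign(\mathcal U-k)=0.
\]
The non-smoothness of $\sign$ on the level set $\{\mathcal U=k\}$ is handled in the standard way by replacing $\sign$ with a smooth monotone approximation $S_\varepsilon$ whose antiderivative $H_\varepsilon\to|\cdot|$, deriving the regularized identity, and passing to $\varepsilon\to 0$ by dominated convergence on $\supp\psi$.

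Multiplying the identity by $\psi$, integrating over each $\Omega_i$, applying the divergence theorem and summing over $i$, the volume terms collapse into
\[
\iint_{Q_{lagr}}\bigl(|\mathcal U-k|\psi_x+\mathcal G(\mathcal U,k)\psi_\varphi-\mathcal F_\zeta(k,\zeta)\zeta_\varphi\sign(\mathcal U-k)\psi\bigr)\,d\varphi\,dx,
\]
while the line integrals split into contributions from the interior shock curves and from $\partial Q_{lagr}$. On each interior shock $\varphi=\Phi_j(x)$ the two adjacent $\Omega_i$'s carry opposite outward unit normals $(\mp\Phi_j',\pm 1)/\sqrt{1+(\Phi_j')^2}$; summing the two contributions yields $\int\bigl(\Phi_j'(x)[|\mathcal U-k|]-[\mathcal G(\mathcal U,k)]\bigr)\psi\,dx$, which is non-negative by Lemma \ref{lemma-U-entropy-ineq} and can therefore be discarded when converting the resulting equality into a lower bound.

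It remains to compute the contribution of $\partial Q_{lagr}$, which consists of three pieces (see Fig.~\ref{fig:orig-lagr-areas}). The left edge $\{x=0,\,\varphi\geqslant 0\}$ with outward normal $(-1,0)$ and trace $\mathcal U(\varphi,0)=\mathcal U_0^\varphi(\varphi)$ from \eqref{eq:U_zeta_weak_initial_values} produces $-\int_0^\infty|\mathcal U_0^\varphi(\varphi)-k|\psi(\varphi,0)\,d\varphi$. The arc $\{(x,\varphi_0(x)):0\leqslant x\leqslant x^0\}$ with unnormalized outward normal $(\varphi_0'(x),-1)$ and $\varphi_0'(x)=-s_0^x(x)$ by \eqref{eq:def_varphi_0} produces $-\int_0^{x^0}\bigl(|\mathcal U(\varphi_0(x),x)-k|s_0^x(x)+\mathcal G(\mathcal U(\varphi_0(x),x),k)\bigr)\psi(\varphi_0(x),x)\,dx$. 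Finally the horizontal segment $\{(x,\varphi_0(x^0)):x\geqslant x^0\}$ with outward normal $(0,-1)$ produces $-\int_{x^0}^\infty\mathcal G(\mathcal U(\varphi_0(x^0)+0,x),k)\psi(\varphi_0(x^0),x)\,dx$. Transferring these three terms from the right-hand side to the left reproduces exactly \eqref{entropy_integral_inequality}.

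The main technical nuisance is that $\mathcal U=1/f$ can blow up to $+\infty$ near the line $\varphi=\varphi_0(x^0)$; however, the one-sided trace $\mathcal U(\varphi_0(x^0)+0,x)$ is taken from inside $Q_{lagr}$, where by Corollary \ref{corollary:zero_flow_area} $s$ is locally separated from zero on $\supp\psi$, so the trace is locally bounded and all boundary integrals above are well-defined. The other delicate bits are the dominated-convergence passage in the $\sign$-regularization and the careful bookkeeping of outward normals on every piece of $\partial Q_{lagr}$; both are routine but must be checked side-by-side.
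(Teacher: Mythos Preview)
Your proposal is correct and follows essentially the same approach as the paper: derive the pointwise entropy identity $|\mathcal U-k|_x+\mathcal G(\mathcal U,k)_\varphi=-\mathcal F_\zeta(k,\zeta)\zeta_\varphi\sign(\mathcal U-k)$ on each smoothness domain, integrate by parts, use Lemma~\ref{lemma-U-entropy-ineq} on the interior $\mathcal U$-shocks, and compute the three boundary contributions exactly as you do. The only cosmetic difference is that the paper handles the non-smoothness of $\sign$ via a formal $\delta$-function argument rather than your $S_\varepsilon$-regularization, and it does not spell out the trace-boundedness remark you make at the end.
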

\begin{proof}
Denote by 
\begin{align*}
\Gamma_\varphi &= \{(\varphi, 0)| \varphi\geqslant 0\}, \\ 
\Gamma_{\varphi x} &= \{(\varphi_0(x), x)| 0 < x < x^0\}, \\
\Gamma_x &= \{(\varphi_0(x^0), x)| x\geqslant x^0\}
\end{align*}
the boundaries of $Q_{lagr}$. Let $\omega_i$ be all the areas, where $\mathcal U$ is $\mathcal C^1$-smooth, $\Gamma_i^{\mathcal U}$ be all the $\mathcal U$-shocks other than $\Gamma_x$, $\Gamma_i^{\zeta}$ be all the $\zeta$-shocks.
Note first that 
\[
\mathcal U_x  + \mathcal F(\mathcal U, \zeta)_\varphi = 0 \quad\text{ in } \omega_i
\]
in the classical sense, and also 
\[
(\mathcal U - k + \mathcal F(\mathcal U, \zeta) - \mathcal F(k, \zeta)) \delta(\mathcal U - k) = 0
\]
because 
\[\mathcal U - k + \mathcal F(\mathcal U, \zeta) - \mathcal F(k, \zeta) = 0 \quad\text{ in } \supp \delta(\mathcal U - k).
\]
Therefore, inside the areas $\omega_i$ we have
\begin{align*}
|\mathcal U - k|_x + \mathcal G(\mathcal U, k)_\varphi & =  \Big(\mathcal U_x  + \mathcal F(\mathcal U, \zeta)_\varphi - \mathcal F(k, \zeta)_\varphi \Big) \sign(\mathcal U - k) \\
&\quad\,{} +
(\mathcal U - k + \mathcal F(\mathcal U, \zeta) - \mathcal F(k, \zeta)) 2\delta(\mathcal U - k) \mathcal U_\varphi \\
&{} = -\mathcal F_\zeta(k, \zeta) \zeta_\varphi \sign(\mathcal U - k),
\end{align*}
Thus, through integration by parts we obtain
\begin{align*}
\iint\limits_{\omega_i} |\mathcal U - k|\psi_x + \mathcal G(\mathcal U, k) \psi_\varphi \,d\varphi \,dx = & \int\limits_{\partial \omega_i} |\mathcal U - k|\psi n_x + \mathcal G(\mathcal U, k) \psi n_\varphi d\sigma \\
&{} + \iint\limits_{\omega_i} \mathcal F_\zeta(k, \zeta) \zeta_\varphi \sign(\mathcal U - k) \psi \, d\varphi\, dx,
\end{align*}
where $n = (n_\varphi, n_x)$ is the outwards normal at $\partial \omega_i$.
Summing integrals over all $\omega_i$ we obtain the full integrals over $Q_{lagr}$ and the sum of integrals over all shocks and boundaries. For $\zeta$-shocks $\Gamma_i^\zeta$ we note that $\psi = 0$ on them by our choice of test functions, therefore these integrals are zero as well. 

Over $\Gamma_\varphi$ we have $n_x = -1$ and $n_\varphi = 0$, therefore the integral simplifies into
\[
\int\limits_{\Gamma_\varphi} |\mathcal U - k|\psi n_x + \mathcal G(\mathcal U, k) \psi n_\varphi d\sigma = -\int\limits_{0}^\infty |\mathcal U(\varphi, 0+) - k| \psi(\varphi, 0) \, d\varphi,
\]
and since there cannot be a shock with infinite speed, we can resolve $\mathcal U(\varphi, 0+) = \mathcal U(\varphi, 0) = \mathcal U_0^\varphi(\varphi)$ to be the initial data for $\mathcal U$ on $\Gamma_\varphi$.

Over $\Gamma_x$ we have $n_x = 0$ and $n_\varphi = -1$, therefore the integral
\[
\int\limits_{\Gamma_x} |\mathcal U - k|\psi n_x + \mathcal G(\mathcal U, k) \psi n_\varphi d\sigma = -\int\limits_{x^0}^\infty \mathcal G(\mathcal U(\varphi_0(x^0)+, x), k) \psi(0, x) \, dx,
\]
but here we cannot resolve $\mathcal U(\varphi_0(x^0)+, x)$, since $\Gamma_x$ is a $\mathcal U$-shock with speed $0$. 

On $\Gamma_{\varphi x}$ we have $\varphi'_0(x) = -s_0^x(x)$, therefore
\[
n_x = \dfrac{-s_0^x(x)}{\sqrt{1 + (s_0^x(x))^2}}, \quad n_\varphi = \dfrac{-1}{\sqrt{1 + (s_0^x(x))^2}},
\]
\[
d\sigma = \sqrt{1 + (s_0^x(x))^2} \, dx,
\]
\begin{align*}
\int\limits_{\Gamma_{\varphi x}} &|\mathcal U - k|\psi n_x + \mathcal G(\mathcal U, k) \psi n_\varphi d\sigma = \\
{} &- \int\limits_0^{x^0} \Big( |\mathcal U(\varphi_0(x)+0, x) - k| s_0^x(x) + \mathcal G(\mathcal U(\varphi_0(x)+0, x), k)\Big) \psi(\varphi_0(x), x) \, dx.
\end{align*}
Here we resolve $\mathcal U(\varphi_0(x)+0, x) = \mathcal U(\varphi_0(x), x)$, since in the original coordinates there are no admissible shocks with speed zero, therefore there is at most finite number of discontinuity points on $[0, x^0] \times \{0\}$ in original coordinates and therefore on $\Gamma_{\varphi x}$ in Lagrange coordinates. 

\begin{figure}[H]
    \begin{minipage}{\linewidth}
    \begin{center}
    \def\lowborder{-1}
\def\leftborder{-1}
\def\upperborder{6}
\def\rightborder{10}

\begin{tikzpicture}[>=stealth', yscale=0.7, xscale=0.7]
\draw[thin,->] ({\leftborder}, 0) -- (\rightborder, 0) node[below] {$x$};
\draw[thin,->] (0,\lowborder) -- (0,\upperborder) node[below right ] {$\varphi$};
\draw[very thick] (\rightborder/2,\upperborder/2) ellipse [x radius=4,y radius=2];

\draw[very thick] (3, 0).. controls (5, 1) and (3, 2)  .. (7, 5.5) node[sloped,above, pos=0.8, minimum height = 20](N1){} node[sloped,below, pos=0.7, minimum height = 20](N2){} node[right, pos=1,]{$\Gamma_i^{\mathcal U}$};
\path (N1.south west) edge[-stealth, red] node[left,pos=0.9] {$n^-$} (N1.north west);
\path (N2.north west) edge[-stealth, red] node[right,pos=0.9] {$n^+$} (N2.south west);

\node[] at (\rightborder/ 4, \upperborder/2) {$\omega^+$};
\node[] at (\rightborder*3/ 4, \upperborder/2) {$\omega^-$};
\end{tikzpicture}
    \end{center}
    \end{minipage}
    \caption{Illustration for integrals over ${\partial\omega^+ \cap \Gamma_i^{\mathcal U}}$ and ${\partial\omega^- \cap \Gamma_i^{\mathcal U}}$.}
    \label{fig:lemma_6}
\end{figure}
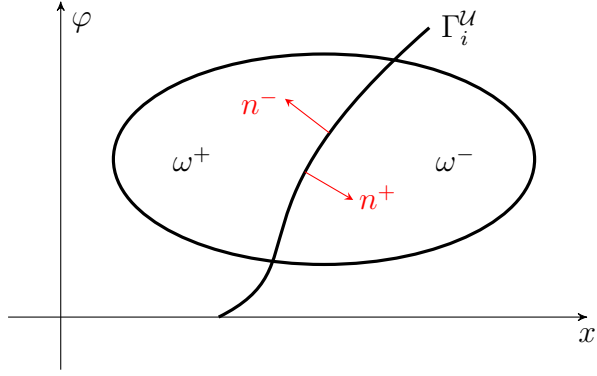

Finally, when we consider a $\mathcal U$-shock $\Gamma_i^{\mathcal U}$, we locally have two areas that bound it. Denote by $\varphi=\Phi(x)$ the curve of this shock, by $\omega^+$ the area where $\varphi > \Phi(x)$ and by $\omega^-$ the area where $\varphi < \Phi(x)$. Let $n^+ = (n^+_\varphi, n^+_x)$ and $n^- = (n^-_\varphi, n^-_x)$ be their outward normals at $\Gamma_i^{\mathcal U}$ (see Fig.~\ref{fig:lemma_6}). Note that by this definition we have
\[
n^-_\varphi = -n^+_\varphi, \quad
n^-_x = -n^+_x, \quad 
n^+_\varphi < 0, \quad 
\dfrac{d\Phi}{dx} = - \dfrac{n^\pm_x}{n^\pm_\varphi}.
\]
Therefore, due to the entropy inequality \eqref{entropy_inequality}, we conclude that
\begin{align*}
\int\limits_{\partial\omega^+ \cap \Gamma_i^{\mathcal U}} + \int\limits_{\partial\omega^- \cap \Gamma_i^{\mathcal U}} |\mathcal U - k|\psi n_x &+ \mathcal G(\mathcal U, k) \psi n_\varphi d\sigma \\
&{} =
\int\limits_{\Gamma_i^{\mathcal U}} n_\varphi^+ \Big( [\mathcal G(\mathcal U, k)] - \dfrac{d\Phi}{dx}[|\mathcal U - k|]  \Big) \psi \, d\sigma \geqslant 0.
\end{align*}
Combining all terms of the sum we arrive at \eqref{entropy_integral_inequality}.
\end{proof}

The next proposition follows from Corollary \ref{corollary:zero_flow_area} and is a requirement in the proof of Kru\v{z}kov's theorem.

\begin{proposition}
Any solution $\mathcal U$ corresponding to an admissible solution in original coordinates belongs to the class $\mathcal U \in L^\infty_{loc}(Q_{lagr})$.
\end{proposition}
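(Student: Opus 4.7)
The plan is to reduce the statement to Corollary \ref{corollary:zero_flow_area} via the representation $\mathcal U = 1/f(s,c)$ from \eqref{eq:def-U-F} together with the fact that the Lagrange coordinate change $\Phi:(x,t)\mapsto(\varphi(x,t),x)$ is a piecewise $\mathcal C^1$-diffeomorphism. In other words, ``$\mathcal U$ locally bounded'' is translated into ``$f$ locally bounded away from $0$'', which in turn is translated into ``$s$ locally bounded away from $0$'', and the last is exactly what Corollary \ref{corollary:zero_flow_area} provides.

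Concretely, fix an arbitrary compact $K\subset Q_{lagr}$ and set $\tilde K:=\Phi^{-1}(K)$. Since $\Phi$, together with its continuous extension along the trace of the zero-flow shock $t_0(x)$, is a piecewise $\mathcal C^1$-homeomorphism, $\tilde K$ is a compact subset of $\overline{Q_{orig}}$. On $\tilde K\cap Q_{orig}$, the local separation of $s$ from zero furnished by Corollary \ref{corollary:zero_flow_area}, combined with a finite-cover argument applied on the open set $Q_{orig}$, gives a uniform bound $s\geq \delta_K>0$. The remaining points of $\tilde K$ lie on the shock $t_0(x)$ and correspond to the boundary component $\Gamma_x\subset Q_{lagr}$; by Lemma \ref{lemma:t0_is_shocks}, $t_0$ is piecewise $\mathcal C^1$, so the trace $s(x,t_0(x)+0)$ is a piecewise continuous function of $x$, and Proposition \ref{prop:inadmissible_shocks} forbids $s^-=0$ on admissible shocks. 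Hence this trace is strictly positive and thus bounded below by a positive constant on any compact $x$-portion. Altogether, $s\geq \delta'_K>0$ on all of $\tilde K$.

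Properties (F1)--(F2) then imply that $f$ is continuous on $[0,1]^2$ with $s=0$ as its only zero in the $s$-direction (since $f(0,c)=0$ and $f_s>0$ on $(0,1)$), so by continuity on the compact $[\delta'_K,1]\times[0,1]$ we obtain $f(s,c)\geq m_K>0$ on $\tilde K$. Therefore $\mathcal U=1/f\leq 1/m_K$ on $K$, which proves $\mathcal U\in L^\infty_{loc}(Q_{lagr})$.

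The main obstacle in the plan is the handling of the boundary $\Gamma_x$, i.e.\ the approach of $\tilde K$ to the zero-flow shock from the flow side, since Corollary \ref{corollary:zero_flow_area} by itself only guarantees a local separation on the \emph{open} set $Q_{orig}$ and says nothing about the behaviour along $t_0(x)$. Precisely this is why the combined input of Lemma \ref{lemma:t0_is_shocks} (piecewise $\mathcal C^1$-regularity of $t_0$ and local finiteness of its pieces) and Proposition \ref{prop:inadmissible_shocks} (inadmissibility of $s^-=0$) is needed to extend the uniform lower bound on $s$ up to the boundary of $Q_{orig}$ and make the compactness argument succeed.
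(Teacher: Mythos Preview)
Your proof is correct and follows exactly the route the paper indicates: the paper does not give a proof at all, merely stating that the proposition ``follows from Corollary~\ref{corollary:zero_flow_area}'', and your argument is a faithful unpacking of that reference via $\mathcal U=1/f(s,c)$ and the fact that the Lagrange change of variables is a piecewise $\mathcal C^1$-diffeomorphism.

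One remark: your extended discussion of the boundary $\Gamma_x$ is arguably more than is needed. By the paper's definition, $Q_{lagr}$ contains $(x^0,+\infty)\times(\varphi_0(x^0),0)$ as an \emph{open} strip in the $\varphi$-direction, so the line $\varphi=\varphi_0(x^0)$ is not part of $Q_{lagr}$. Hence any compact $K\subset Q_{lagr}$ has positive distance from $\Gamma_x$, its preimage $\tilde K=\Phi^{-1}(K)$ is a compact subset of $Q_{orig}$ bounded away from the curve $t_0(x)$, and Corollary~\ref{corollary:zero_flow_area} applies directly without invoking Lemma~\ref{lemma:t0_is_shocks} or Proposition~\ref{prop:inadmissible_shocks} for the trace. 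Your boundary argument is not wrong---it would be needed if one wanted uniform bounds up to $\Gamma_x$, which the paper does use elsewhere (e.g.\ the term $\mathcal U(\varphi_0(x^0)+0,x)$ in \eqref{eq:U_weak})---but for $L^\infty_{loc}(Q_{lagr})$ in the strict sense it is superfluous.
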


Using this restriction on $\mathcal U$ it is possible to prove the following variation of the entropy inequality.

\begin{proposition}[Proposition 2.7.1, \cite{Serre1}]
\label{proposition_entropy_solution_inequality}
For a fixed solution $\zeta$, given two solutions $\mathcal U$ and $\mathcal V$, for any positive test function $\psi \in \mathcal D^+(Q_{lagr})$ with $\supp \psi$ containing no $\zeta$-shocks, we have
\begin{align}
\label{entropy_solutions_inequality}
\begin{split}
0 \leqslant &\iint\limits_{Q_{lagr}} |\mathcal U - \mathcal V|\psi_x + \mathcal G(\mathcal U, \mathcal V) \psi_\varphi \,d\varphi \,dx \\
&{} + \int\limits_0^{x^0} |\mathcal U(\varphi_0(x), x) - \mathcal V(\varphi_0(x), x)| s_0^x(x) \psi(\varphi_0(x), x) \, dx \\
&{} + \int\limits_0^{x^0} \mathcal G(\mathcal U(\varphi_0(x), x), \mathcal V(\varphi_0(x), x)) \psi(\varphi_0(x), x) \, dx \\
&{} + \int\limits_{x^0}^\infty \mathcal G(\mathcal U(\varphi_0(x^0)+0, x), \mathcal V(\varphi_0(x^0)+0,x))\psi(0, x)\,dx \\
&{} + \int\limits_0^\infty |\mathcal U(\varphi, 0) - \mathcal V(\varphi, 0)|\psi(\varphi, 0)\,d\varphi.
\end{split}
\end{align}
\end{proposition}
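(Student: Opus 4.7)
My plan is to adapt the proof of Lemma \ref{lemma-U-entropy-int-ineq} with the constant $k$ replaced by the function $\mathcal V(\varphi, x)$. The crucial observation is that when $\mathcal U$ and $\mathcal V$ are two solutions of \eqref{eq:U-lagr-eqn} driven by the same $\zeta$, the source terms $-\mathcal F_\zeta(k,\zeta)\zeta_\varphi \sign(\mathcal U-k)$ that complicate Lemma \ref{lemma-U-entropy-int-ineq} cancel exactly. I would first partition $Q_{lagr}$ into open regions $\omega_i$ in which both $\mathcal U$ and $\mathcal V$ are $\mathcal C^1$-smooth, separated by $\mathcal U$-shocks, $\mathcal V$-shocks and $\zeta$-shocks.

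Inside each $\omega_i$, combining the two classical equations $\mathcal U_x + \mathcal F(\mathcal U,\zeta)_\varphi=0$ and $\mathcal V_x + \mathcal F(\mathcal V,\zeta)_\varphi=0$ gives $(\mathcal U-\mathcal V)_x + [\mathcal F(\mathcal U,\zeta) - \mathcal F(\mathcal V,\zeta)]_\varphi = 0$, since the two $\mathcal F_\zeta\zeta_\varphi$ contributions produced by the chain rule cancel. Multiplying by $\sign(\mathcal U-\mathcal V)$, and noting that $\mathcal F(\mathcal U,\zeta)-\mathcal F(\mathcal V,\zeta)$ vanishes on $\{\mathcal U=\mathcal V\}$, yields the pointwise identity $|\mathcal U-\mathcal V|_x + \mathcal G(\mathcal U,\mathcal V)_\varphi = 0$ in $\omega_i$; the ambiguity on the measure-zero set $\{\mathcal U=\mathcal V\}$ is resolved by the standard regularization $|r|\approx\sqrt{r^2+\delta^2}$ followed by $\delta\to 0$. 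Stokes' theorem in each cell then gives
\[
\iint_{\omega_i} |\mathcal U-\mathcal V|\psi_x + \mathcal G(\mathcal U,\mathcal V)\psi_\varphi\, d\varphi\, dx = \int_{\partial\omega_i} \bigl[|\mathcal U-\mathcal V|\psi n_x + \mathcal G(\mathcal U,\mathcal V)\psi n_\varphi\bigr] d\sigma,
\]
with no volume contribution.

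Summing over all $\omega_i$ collects each interior shock from its two adjacent cells, plus contributions along the three parts of $\partial Q_{lagr}$. On $\zeta$-shocks $\psi\equiv 0$ by hypothesis. On a $\mathcal U$-shock $\varphi=\Phi(x)$ where $\mathcal V$ is continuous, applying Lemma \ref{lemma-U-entropy-ineq} pointwise with the specific constant $k=\mathcal V(\Phi(x),x)$ gives $[\mathcal G(\mathcal U,\mathcal V)] \leqslant \Phi'(x)[|\mathcal U-\mathcal V|]$, where the continuity of $\mathcal V$ lets the brackets be evaluated in $\mathcal U$ alone. Using $n^+_\varphi<0$ and $\Phi'(x)=-n^+_x/n^+_\varphi$ exactly as in Lemma \ref{lemma-U-entropy-int-ineq}, the two-sided boundary contribution becomes $\int_\Gamma n^+_\varphi\psi\bigl([\mathcal G(\mathcal U,\mathcal V)]-\Phi'(x)[|\mathcal U-\mathcal V|]\bigr) d\sigma \geqslant 0$. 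The same argument with roles swapped, together with the symmetry $\mathcal G(\mathcal V,\mathcal U)=\mathcal G(\mathcal U,\mathcal V)$, gives nonnegative contributions on $\mathcal V$-shocks as well; isolated crossings of $\mathcal U$- and $\mathcal V$-shocks form a set of one-dimensional measure zero.

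On $\partial Q_{lagr}$ the normals and computations are identical to those already carried out in Lemma \ref{lemma-U-entropy-int-ineq}: the outward normal $(0,-1)$ on $\Gamma_\varphi$ yields the initial integral in $\varphi$; $(-1,0)$ on $\Gamma_x$ yields the $\mathcal G$-integral over $x\geqslant x^0$; and on $\Gamma_{\varphi x}$ the relation $\varphi_0'(x)=-s_0^x(x)$ produces the combined $|\mathcal U-\mathcal V|s_0^x(x)+\mathcal G(\mathcal U,\mathcal V)$ integrand. Transposing these boundary pieces across the identity yields exactly \eqref{entropy_solutions_inequality}. The delicate step is the smooth-region cancellation: it depends essentially on $\mathcal U$ and $\mathcal V$ being driven by the \emph{same} $\zeta$, since otherwise one would be left with a residual source $\sign(\mathcal U-\mathcal V)[\mathcal F_\zeta(\mathcal U,\zeta)-\mathcal F_\zeta(\mathcal V,\zeta)]\zeta_\varphi$ of order $|\mathcal U-\mathcal V|$, which would require a Gronwall-type absorption and spoil the clean form of \eqref{entropy_solutions_inequality}.
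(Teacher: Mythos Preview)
Your approach is correct and takes a different route from what the paper invokes. The paper does not supply its own proof here: it refers to Serre's Proposition 2.7.1 for constant $\zeta$ and to Kru\v{z}kov's original Theorem 1 for the general case. Those arguments use the doubling-of-variables technique---one writes \eqref{entropy_integral_inequality} for $\mathcal U$ in variables $(\varphi,x)$ with the frozen constant $k=\mathcal V(\varphi',x')$, symmetrizes, integrates against a four-variable test function concentrating on the diagonal, and passes to the limit---and they work for arbitrary $L^\infty$ entropy solutions with no piecewise structure. Your cell-by-cell argument instead exploits the piecewise-$\mathcal C^1$ hypothesis (W1) to obtain the exact identity $|\mathcal U-\mathcal V|_x + \mathcal G(\mathcal U,\mathcal V)_\varphi = 0$ in smooth regions, which is precisely the computation the paper itself carries out later in the proof of Lemma \ref{lemma_entropy_ineq_zeta_shock}; so your route is fully consistent with the paper's framework and avoids the four-variable machinery at the price of relying on (W1). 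One small point you gloss over: if a $\mathcal U$-shock and a $\mathcal V$-shock coincide along an arc (not merely at isolated crossings), you cannot invoke Lemma \ref{lemma-U-entropy-ineq} with a single constant $k$ on either side; instead one must verify $[\mathcal G(\mathcal U,\mathcal V)]\leqslant \Phi'(x)[|\mathcal U-\mathcal V|]$ directly by combining the two Oleinik conditions that share the same chord slope $\Phi'(x)$, which is a short case analysis on the relative order of $\mathcal U^\pm$ and $\mathcal V^\pm$.
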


In \cite{Serre1} this Proposition is proved for the case that corresponds to $\zeta \equiv const$. For the general case, see the original proof of Kru\v{z}kov's \cite[Theorem 1]{Kruzhkov}.

\subsection{Oleinik and Lax admissibility of $\zeta$-shocks}

Recall that due to Proposition \ref{prop:inadmissible_shocks} (and also Proposition \ref{prop:c-shock-admissibility}) for an admissible shock with different values of $c$ we must have $c^- > c^+$ and thus $\zeta^- < \zeta^+$. Therefore, due to the concavity of the function $a$ we always have Lax condition for the equation \eqref{eq:c-lagr-eqn}.
\begin{proposition}
For any admissible $\zeta$-shock we have Lax condition
\[
a_\zeta(\zeta^+) < \dfrac{a(\zeta^-)-a(\zeta^+)}{\zeta^--\zeta^+} < a_\zeta(\zeta^-).
\]
Moreover, 
\[
\dfrac{a(\zeta)-a(\zeta^-)}{\zeta-\zeta^-} > \dfrac{a(\zeta^-)-a(\zeta^+)}{\zeta^--\zeta^+} \quad \text{ for all } \zeta\in(\zeta^-, \zeta^+),
\]
therefore, Oleinik's E-condition also holds.
\end{proposition}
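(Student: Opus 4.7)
My plan is to observe that the whole proposition is a direct consequence of strict concavity of the adsorption function $a$ on an interval whose left endpoint is $\zeta^-$ and right endpoint is $\zeta^+$, so there is really no dynamic-system analysis needed here, in contrast with the $\mathcal U$-shock case of Lemma \ref{lemma_Oleinik_equivalence}.

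First I would record the geometric setup. By Proposition \ref{prop:inadmissible_shocks} any admissible $c$-shock has $c^- > c^+$, and under the shock mapping \eqref{shock_mapping} this translates into $\zeta^- = c^+ < c^- = \zeta^+$, so we are dealing with an interval $[\zeta^-,\zeta^+]\subset[0,1]$ on which assumption (A3) guarantees $a_{cc}<0$; in particular $a_\zeta$ is strictly decreasing on this interval.

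For the Lax inequalities I would simply invoke the mean value theorem: there exists $\xi\in(\zeta^-,\zeta^+)$ with
\[
\frac{a(\zeta^-)-a(\zeta^+)}{\zeta^--\zeta^+} \;=\; \frac{a(\zeta^+)-a(\zeta^-)}{\zeta^+-\zeta^-} \;=\; a_\zeta(\xi),
\]
and since $a_\zeta$ is strictly decreasing on $[\zeta^-,\zeta^+]$ by (A3), we get $a_\zeta(\zeta^+)<a_\zeta(\xi)<a_\zeta(\zeta^-)$, which is exactly the claimed chain of strict inequalities. Note that this argument also identifies the middle quantity with the Rankine--Hugoniot shock speed $v^*$ from \eqref{eq:RH-2}, so it is the classical Lax condition $a_\zeta(\zeta^+)<v^*<a_\zeta(\zeta^-)$.

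For the Oleinik E-condition I would use the standard characterization of strict concavity by the above-chord property: for $\zeta\in(\zeta^-,\zeta^+)$, writing $\zeta=(1-\lambda)\zeta^- + \lambda\zeta^+$ with $\lambda\in(0,1)$, strict concavity of $a$ on $[\zeta^-,\zeta^+]$ yields $a(\zeta)>(1-\lambda)a(\zeta^-)+\lambda a(\zeta^+)$. Subtracting $a(\zeta^-)$ from both sides and dividing by $\zeta-\zeta^-=\lambda(\zeta^+-\zeta^-)>0$ gives
\[
\frac{a(\zeta)-a(\zeta^-)}{\zeta-\zeta^-} \;>\; \frac{a(\zeta^+)-a(\zeta^-)}{\zeta^+-\zeta^-} \;=\; \frac{a(\zeta^-)-a(\zeta^+)}{\zeta^--\zeta^+},
\]
which is precisely the desired Oleinik inequality for the scalar conservation law \eqref{eq:c-lagr-eqn} with the shock speed $v^*$ given by the second line of \eqref{eq:RH-2}. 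There is no real obstacle in the proof; the only thing to be slightly careful about is to keep the signs straight, since the convention in this paper places $\zeta^+$ on the right and $\zeta^-$ on the left even though $\zeta^+>\zeta^-$, which inverts the usual orientation one expects from a ``shock from $\zeta^-$ to $\zeta^+$''.
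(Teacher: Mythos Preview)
Your proposal is correct and matches the paper's approach: the paper does not give an explicit proof, only the sentence preceding the proposition (``Therefore, due to the concavity of the function $a$ we always have Lax condition for the equation \eqref{eq:c-lagr-eqn}''), and your argument via the mean value theorem and the above-chord property of strictly concave functions is exactly the standard unpacking of that remark.
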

Similar to Lemma \ref{lemma-U-entropy-ineq} and Lemma \ref{lemma-U-entropy-int-ineq} we obtain the following entropy inequalities from Oleinik's E-condition.
\begin{proposition}
\label{prop-zeta-entropy-integral}
Denote $\mathcal A(\zeta, k) = (a(\zeta) - a(k))\sign(\zeta-k)$. Then on any admissible $\zeta$-shock $\Phi(x)$ inside $Q_{lagr}$ we have the entropy inequality
\[
[\mathcal A(\zeta, k)] \leqslant \dfrac{d\Phi}{dx}[|\zeta-k|], \quad k\in\mathbb{R},
\]
and therefore for every positive test function $\psi \in \mathcal D^+(Q_{lagr})$ we have the integral entropy condition
\begin{align*}
0 \leqslant &\iint\limits_{Q_{lagr}} |\zeta - k|\psi_x + \mathcal A(\zeta, k)\psi_\varphi \,d\varphi\,dx \\
&{} + \int\limits_0^{x^0} \Big( |\zeta(\varphi_0(x), x) - k| s_0^x(x) + \mathcal A(\zeta(\varphi_0(x), x), k)\Big) \psi(\varphi_0(x), x) \, dx \\
{} & + \int\limits_{x^0}^\infty \mathcal A(\zeta(\varphi_0(x^0),x), k)\psi(\varphi_0(x^0),x) \,dx + \int\limits_0^\infty |\zeta(\varphi, 0) - k|\psi(\varphi, 0)\, d\varphi.
\end{align*}
\end{proposition}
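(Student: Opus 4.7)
The plan is to imitate the two-step structure of Lemmas \ref{lemma-U-entropy-ineq}--\ref{lemma-U-entropy-int-ineq}, taking advantage of the fact that \eqref{eq:c-lagr-eqn} is a genuine scalar conservation law for $\zeta$ alone. Since the flux $a(\zeta)$ has no auxiliary argument, no cross term analogous to $\mathcal F_\zeta(k,\zeta)\zeta_\varphi\sign(\mathcal U-k)$ is produced in the smoothness regions, and consequently the test function $\psi$ is not required to have its support disjoint from any type of discontinuity.

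I first establish the pointwise entropy inequality on an admissible $\zeta$-shock $\varphi=\Phi(x)$, recalling that $\zeta^-<\zeta^+$ by Proposition \ref{prop:inadmissible_shocks}. For $k\in(\zeta^-,\zeta^+)$, writing $k=\gamma\zeta^-+(1-\gamma)\zeta^+$ with $\gamma\in(0,1)$, concavity of $a$ (assumption (A3)) yields
\[
(1-\gamma)a(\zeta^+)+\gamma a(\zeta^-)\leqslant a(k),
\]
which rearranges, using $\sign(\zeta^+-k)=1=-\sign(\zeta^--k)$, into
\[
a(\zeta^+)+a(\zeta^-)-2a(k)\leqslant(2\gamma-1)\bigl(a(\zeta^+)-a(\zeta^-)\bigr).
\]
Unfolding $[\mathcal A(\zeta,k)]$ and $[|\zeta-k|]$ exactly as in the proof of Lemma \ref{lemma-U-entropy-ineq} and then invoking the Rankine--Hugoniot condition \eqref{eq:RH-2} for $\zeta$ identifies the two sides of the display with $[\mathcal A(\zeta,k)]$ and $(d\Phi/dx)[|\zeta-k|]$ respectively, yielding the claimed inequality. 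For $k\notin[\zeta^-,\zeta^+]$ the sign functions align and \eqref{eq:RH-2} collapses the inequality to an equality. On pure $\mathcal U$-shocks $\zeta$ is continuous, so $[\mathcal A(\zeta,k)]=[|\zeta-k|]=0$ and both sides vanish.

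For the integral inequality, the argument mirrors that of Lemma \ref{lemma-U-entropy-int-ineq}. Inside each $\mathcal C^1$-smoothness region $\omega_i\subset Q_{lagr}$ one has
\[
|\zeta-k|_x+\mathcal A(\zeta,k)_\varphi=\sign(\zeta-k)\bigl(\zeta_x+a(\zeta)_\varphi\bigr)=0
\]
by \eqref{eq:c-lagr-eqn}, so integration by parts against $\psi\in\mathcal D^+(Q_{lagr})$ on each $\omega_i$ and summation produces the bulk term of the claim together with boundary contributions along (i) all interior shocks, (ii) the $\varphi$-axis $\Gamma_\varphi$, (iii) the image $\Gamma_{\varphi x}$ of the initial-data portion $[0,x^0]\times\{0\}$, and (iv) the horizontal boundary $\Gamma_x$ at $\varphi=\varphi_0(x^0)$. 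The interior-shock sum is non-negative by the pointwise inequality just established and vanishes on pure $\mathcal U$-shocks. The boundary contributions are computed with the same outward-normal conventions as in Lemma \ref{lemma-U-entropy-int-ineq}: the normal $(0,-1)$ on $\Gamma_\varphi$ yields $-\int_0^\infty|\zeta(\varphi,0)-k|\psi(\varphi,0)\,d\varphi$; on $\Gamma_{\varphi x}$ the relation $\varphi_0'(x)=-s_0^x(x)$ produces the factor $s_0^x(x)$ and gives $-\int_0^{x^0}\bigl(|\zeta-k|s_0^x+\mathcal A(\zeta,k)\bigr)\psi\,dx$; the normal $(-1,0)$ on $\Gamma_x$ gives $-\int_{x^0}^\infty\mathcal A(\zeta(\varphi_0(x^0)+0,x),k)\psi\,dx$. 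Transferring these boundary terms to the right-hand side reproduces the claim, after identifying $\zeta(\varphi_0(x^0)+0,x)=c_0^x(x)$ via Proposition \ref{prop:c_on_zero_boundary}.

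The only delicate bookkeeping concerns $\Gamma_x$: it is a $\mathcal U$-shock of zero Lagrange velocity along which the Rankine--Hugoniot condition for $\mathcal U$ is degenerate, but this plays no role in the $\zeta$-equation, for which $\Gamma_x$ is simply an external boundary of $Q_{lagr}$ along which $\zeta$ admits a continuous one-sided trace equal to $c_0^x(x)$. This geometric detail aside, the argument is strictly simpler than its $\mathcal U$ counterpart.
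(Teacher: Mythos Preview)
Your proof is correct and follows essentially the same approach the paper intends: it explicitly says the proposition is obtained ``similar to Lemma \ref{lemma-U-entropy-ineq} and Lemma \ref{lemma-U-entropy-int-ineq}'', and you have carried out precisely that analogy, with the appropriate simplifications (no cross term, hence no restriction on $\supp\psi$; the $\mathcal U$-shocks contribute nothing because $\zeta$ is continuous across them). Your observation that the $\Gamma_x$-trace of $\zeta$ can be identified with $c_0^x(x)$ via Proposition \ref{prop:c_on_zero_boundary} is a harmless extra remark not required by the statement as written.
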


Unfortunately, the entropy inequality \eqref{entropy_inequality} does not hold on a $\zeta$-shock. Clearly, for $k < \min(\mathcal U^+, \mathcal U^-)$ in the case $(\mathcal F 4.1)$ due to Rankine--Hugoniot condition we have
\[
[\mathcal G(\mathcal U, k)] - \dfrac{d\Phi}{dx} [|\mathcal U - k|] = \mathcal F(k, \zeta^-) - \mathcal F(k, \zeta^+) > 0,
\]
and similarly, for $k > \max(\mathcal U^+, \mathcal U^-)$ in the case $(\mathcal F 4.2)$ we have
\[
[\mathcal G(\mathcal U, k)] - \dfrac{d\Phi}{dx} [|\mathcal U - k|] = \mathcal F(k, \zeta^+) - \mathcal F(k, \zeta^-) > 0.
\]
Thankfully, even though such shocks are not entropic in the same sense as $\mathcal U$-shocks, they still satisfy the integral inequality similar to Proposition~\ref{proposition_entropy_solution_inequality} for any pair of solutions with admissible shocks. 

\begin{lemma}
\label{lemma_zeta_shock_entropy_condition}
For a fixed solution $\zeta$, given two solutions $\mathcal U$ and $\mathcal V$, we have the following entropy inequality on any $\zeta$-shock:
\begin{equation}\label{eq:entropy-ineq-zeta-shock}
[\mathcal G(\mathcal U, \mathcal V)] \leqslant \dfrac{d\Phi}{dx} [|\mathcal U - \mathcal V|].
\end{equation}
\end{lemma}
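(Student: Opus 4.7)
\textbf{Proof plan for Lemma \ref{lemma_zeta_shock_entropy_condition}.}

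The plan is to reduce the inequality to a purely pointwise algebraic statement at a point on the $\zeta$-shock by using the Rankine--Hugoniot relations \eqref{eq:RH-2}, and then to split into cases according to the signs of $w^\pm := \mathcal U^\pm - \mathcal V^\pm$. Write $v^* := d\Phi/dx$. Because $\zeta$ is the same solution for both $\mathcal U$ and $\mathcal V$, the shock velocity $v^*$ is determined purely by $\zeta^\pm$ via the second equation of \eqref{eq:RH-2} and is therefore common to the two R--H conditions
\[
v^*[\mathcal U]=[\mathcal F(\mathcal U,\zeta)], \qquad v^*[\mathcal V]=[\mathcal F(\mathcal V,\zeta)].
\]
Subtracting these yields the key telescoping identity
\[
\bigl(\mathcal F(\mathcal U^+,\zeta^+)-\mathcal F(\mathcal V^+,\zeta^+)\bigr)-\bigl(\mathcal F(\mathcal U^-,\zeta^-)-\mathcal F(\mathcal V^-,\zeta^-)\bigr)=v^*\,(w^+-w^-).
\]

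Now I would expand $[\mathcal G(\mathcal U,\mathcal V)]$ and $[|\mathcal U-\mathcal V|]$ by resolving $\sign$ and $|\cdot|$ at $\pm$, and compare.

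\emph{Case 1}: $\sign(w^+)=\sign(w^-)=\sigma$. Then $[|\mathcal U-\mathcal V|]=\sigma(w^+-w^-)$ and $[\mathcal G(\mathcal U,\mathcal V)]=\sigma\bigl([\mathcal F(\mathcal U,\zeta)]-[\mathcal F(\mathcal V,\zeta)]\bigr)$, so the key identity immediately yields equality in \eqref{eq:entropy-ineq-zeta-shock}.

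\emph{Case 2}: $\sign(w^+)\neq \sign(w^-)$; WLOG $w^+>0>w^-$. A short computation, again using the key identity, reduces $[\mathcal G(\mathcal U,\mathcal V)]-v^*[|\mathcal U-\mathcal V|]$ to twice the quantity
\[
\bigl(\mathcal F(\mathcal U^-,\zeta^-)-\mathcal F(\mathcal V^-,\zeta^-)\bigr)-v^*(\mathcal U^--\mathcal V^-),
\]
so the inequality becomes the one-sided chord condition
\[
\frac{\mathcal F(\mathcal U^-,\zeta^-)-\mathcal F(\mathcal V^-,\zeta^-)}{\mathcal U^- - \mathcal V^-}\ \geqslant\ v^*,
\]
(and by the key identity this is equivalent to the symmetric bound $(\mathcal F(\mathcal U^+,\zeta^+)-\mathcal F(\mathcal V^+,\zeta^+))/(\mathcal U^+-\mathcal V^+)\leqslant v^*$).

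The main obstacle is Case 2, and this is where I would invest the technical work. The strategy is to use the $c$-shock admissibility classification from Proposition \ref{prop:c-shock-admissibility} and the structure of $\mathcal F$ from Proposition \ref{prop:new-flow-function-properties}. Specifically, both ordered pairs $(\mathcal U^\pm,\zeta^\pm)$ and $(\mathcal V^\pm,\zeta^\pm)$ sit on the Hugoniot locus $\mathcal F(u,\zeta^+)-\mathcal F(u^-,\zeta^-)=v^*(u-u^-)$, and the Lax-type conditions inherited from the nullcline analysis of \refpar\ref{sec:sec2-nullclines} (recorded in Lemma \ref{lemma_Oleinik_equivalence} for $\mathcal U$-shocks and implicit in the critical-point picture of Proposition \ref{prop:c-shock-admissibility} for $\zeta$-shocks) force $\mathcal F_{\mathcal U}(\cdot,\zeta^\pm)-v^*$ to keep a constant sign on the relevant branch. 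Combined with the fact that $\mathcal F(\cdot,\zeta)$ has a single inflection point (property ($\mathcal F$3)), so $\mathcal F_{\mathcal U}(\cdot,\zeta)-v^*$ changes sign at most once in $\mathcal U$, this either rules out the crossed configuration $w^+>0>w^-$ outright, or places the entire interval between $\mathcal U^-$ and $\mathcal V^-$ in the monotonic branch where $\mathcal F_{\mathcal U}(\cdot,\zeta^-)\geqslant v^*$, giving the needed chord estimate by the mean value theorem. The most delicate bookkeeping will be a case inspection over the nullcline types (Type 0, 0--I, I, I--II, II) to confirm admissibility is incompatible with a sign flip of $w$, thereby collapsing Case 2 back into Case 1 or directly verifying the chord bound.
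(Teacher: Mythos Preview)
Your plan is correct and coincides with the paper's proof. Both arguments split on the signs of $w^\pm=\mathcal U^\pm-\mathcal V^\pm$, dispose of the same-sign case by the Rankine--Hugoniot identity, and reduce the crossed case $w^+>0>w^-$ (the other sign pattern is symmetric by swapping $\mathcal U\leftrightarrow\mathcal V$) to exactly your chord bound
\[
\frac{\mathcal F(\mathcal U^-,\zeta^-)-\mathcal F(\mathcal V^-,\zeta^-)}{\mathcal U^--\mathcal V^-}\geqslant v^*
\quad\Longleftrightarrow\quad
\frac{\mathcal F(\mathcal U^+,\zeta^+)-\mathcal F(\mathcal V^+,\zeta^+)}{\mathcal U^+-\mathcal V^+}\leqslant v^*.
\]

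Two small remarks on your Case~2 sketch. First, the crossed configuration is \emph{not} ruled out by admissibility --- the paper explicitly treats it (it corresponds to $\mathcal U^\pm$ and $\mathcal V^\pm$ sitting on opposite branches $u_1^\pm,u_2^\pm$ of the same nullcline picture) --- so you should aim for the chord bound directly rather than hope to collapse back into Case~1. Second, the paper does not argue via a mean-value/monotone-branch statement on $\mathcal F_{\mathcal U}(\cdot,\zeta^-)-v^*$; instead it works in the original $(s,c)$ picture, using that both shocks have the same $d_1=[a(c)]/[c]$ so the two secant lines $v(s+d_1)$ and $w(s+d_1)$ are ordered, which pins down the relative positions of the critical points $s_{1,2}^\pm$ and $z_{1,2}^\pm$ and then reads off the chord inequality from the graph of $\mathcal F(\cdot,\zeta^\pm)$. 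Your proposed route via ($\mathcal F$3) and the branch structure can be made to work, but the cleanest bookkeeping is the paper's: fix $v<w$ WLOG, note the inadmissible pairing $u_2^-\to u_1^+$ from Proposition~\ref{prop:c-shock-admissibility} eliminates one sub-case, and then the ordering of the four intersection points on each $\zeta^\pm$-level gives the two chord inequalities by inspection.
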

\begin{proof}
In order to prove this inequality, we analyze the nullcline configurations for $\mathcal U$ and $\mathcal V$ at any given point of the $\zeta$-shock. We assume that both solutions have a Type I nullcline configuration, all other cases are analyzed similarly. We also assume that the nullcline configurations for these solutions do not coincide. If they do, then we have equality in \eqref{eq:entropy-ineq-zeta-shock} following immediately from Rankine--Hugoniot condition.

\begin{figure}[ht!]
    \begin{center}
    \includegraphics[trim={0.1cm 0.1cm 0.1cm 0.1cm}, clip, width=0.55\linewidth]{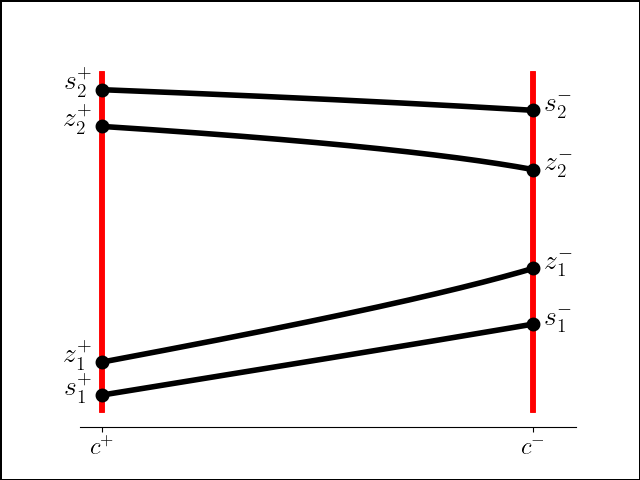}
    \end{center}
    \caption{An illustration for the positions of $z^\pm_{1,2}$ and $s^\pm_{1,2}$ on the nullcline configuration plane.}
    \label{fig:two_nullcline_configurations}
\end{figure}
On Fig.\ref{fig:two_nullcline_configurations} critical points $s^\pm_{1,2}$ contain $\vartheta_{\zeta^\mp}(\mathcal U^\mp)$ (recall the definition of $\vartheta_{\zeta^\mp}(\mathcal U^\mp)$ from \eqref{shock_mapping}) and correspond to the shock speed $v$, and critical points $z^\pm_{1,2}$ contain $\vartheta_{\zeta^\mp}(\mathcal V^\mp)$ and correspond to the speed $w$. As mentioned above, if $v = w$, then the entropy inequality \eqref{eq:entropy-ineq-zeta-shock} immediately follows from the Rankine--Hugoniot condition. Therefore, without loss of generality, we assume $v<w$. Now we can break down all possible cases for how the values $\vartheta_{\zeta^\mp}(\mathcal V^\mp)$ are arranged with respect to $\vartheta_{\zeta^\mp}(\mathcal U^\mp)$.
\begin{itemize}
    \item $\mathcal U^+ = \theta_{c^-}(s_2^-)$, $\mathcal U^- = \theta_{c^+}(s_1^+)$. This case corresponds to an inadmissible shock due to Proposition \ref{prop:c-shock-admissibility}, therefore we don't need to consider it.
    \item $\mathcal U^+ = \theta_{c^-}(s_2^-)$, $\mathcal U^- = \theta_{c^+}(s_2^+)$ or $\mathcal U^+ = \theta_{c^-}(s_1^-)$, $\mathcal U^- = \theta_{c^+}(s_1^+)$. In these cases we have 
    \[
    \sign(\mathcal U^+ - \mathcal V^+) = \sign(\mathcal U^- - \mathcal V^-),
    \]
    therefore we have equality in \eqref{eq:entropy-ineq-zeta-shock} due to Rankine--Hugoniot condition \eqref{Lagrange_Rankine_Hugoniot}.
    \item $\mathcal U^+ = \theta_{c^-}(s_1^-)$, $\mathcal U^- = \theta_{c^+}(s_2^+)$. In this case 
    \begin{equation}
    \label{eq:known_signs}
    \sign(\mathcal U^+ - \mathcal V^+) = 1, \quad \sign(\mathcal U^- - \mathcal V^-) = -1,
    \end{equation}
    so \eqref{eq:entropy-ineq-zeta-shock} transforms into
    \[
    \mathcal F(\mathcal U^+, \zeta^+) - \mathcal F(\mathcal V^+, \zeta^+) + \mathcal F(\mathcal U^-, \zeta^-) - \mathcal F(\mathcal V^-, \zeta^-) \leqslant \dfrac{d\Phi}{dx} (\mathcal U^+ - \mathcal V^+ + \mathcal U^- - \mathcal V^-).
    \]
    Note that due to the monotonicity of $a$ and the Rankine-Hugoniot condition \eqref{eq:RH-2} the speed of the shock
    \begin{equation}
    \label{eq:zeta-shock-velocity-positive}
    \dfrac{d\Phi}{dx} = \dfrac{a(\zeta^-)-a(\zeta^+)}{\zeta^--\zeta^+} > 0.
    \end{equation}
    It could be observed geometrically on Fig.~\ref{fig:two_nullclines_hodograph} that the following inequalities hold for the inclines of lines:
    \[
    \dfrac{\mathcal F(\mathcal U^+, \zeta^+) - \mathcal F(\mathcal V^+, \zeta^+)}{\mathcal U^+ - \mathcal V^+} < \dfrac{d\Phi}{dx},
    \]
    \[
    \dfrac{\mathcal F(\mathcal U^-, \zeta^-) - \mathcal F(\mathcal V^-, \zeta^-)}{\mathcal U^- - \mathcal V^-} > \dfrac{d\Phi}{dx}.
    \]
    Taking into account the known signs \eqref{eq:known_signs} of the denominators, we obtain
     \[
    \mathcal F(\mathcal U^+, \zeta^+) - \mathcal F(\mathcal V^+, \zeta^+) < \dfrac{d\Phi}{dx} (\mathcal U^+ - \mathcal V^+),
    \]
    \[
    \mathcal F(\mathcal U^-, \zeta^-) - \mathcal F(\mathcal V^-, \zeta^-) < \dfrac{d\Phi}{dx} (\mathcal U^- - \mathcal V^-).
    \]
    Taking the sum of these inequalities concludes the proof.
\end{itemize}
\begin{figure}[t!]
    \begin{center}
    \includegraphics[trim={0.1cm 0.1cm 0.1cm 0.1cm}, clip, width=0.8\linewidth]{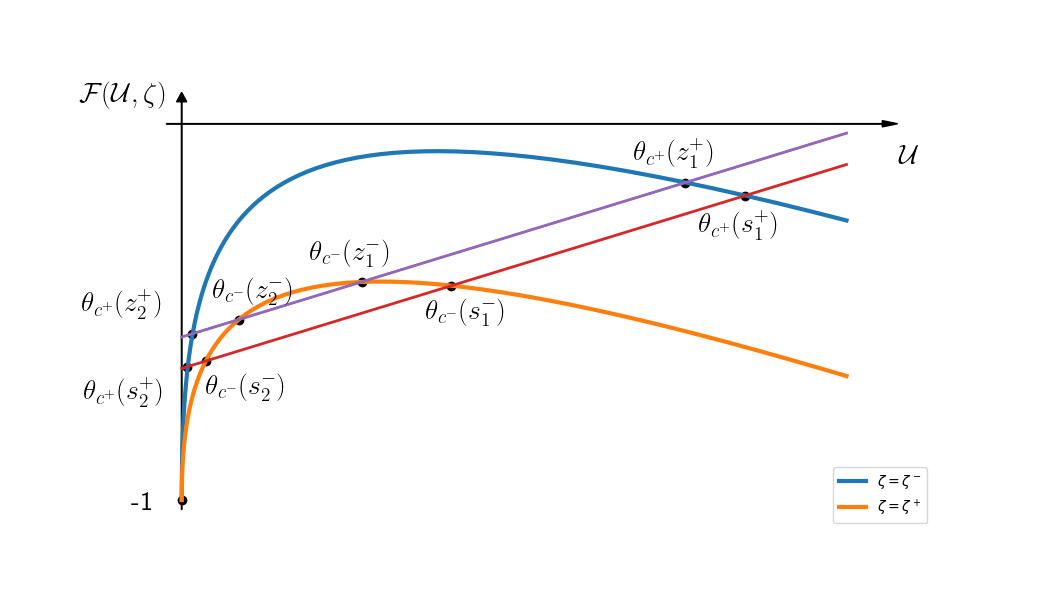}
    \end{center}
    \caption{An illustration for the positions of $\theta_{c^\pm}(z^\pm_{1,2})$ and $\theta_{c^\pm}(s^\pm_{1,2})$ on the plots of $\mathcal F(\cdot, \zeta^\pm)$.}
    \label{fig:two_nullclines_hodograph}
\end{figure}
\end{proof}

Note that due to \eqref{eq:zeta-shock-velocity-positive} any  $\zeta$-shock has at most one intersection point with the boundaries of $Q_{lagr}$. Thus, the integrals over them are omitted in the following lemma.

\begin{lemma}\label{lemma_entropy_ineq_zeta_shock}
For a fixed solution $\zeta$, consider a $\zeta$-shock $\Gamma^\zeta$ given by the curve $\Phi(x)$. Denote by $\psi^\varepsilon(\varphi, x)$ a series of positive test functions with supports contracting towards $\Gamma^\zeta$:
\[
\psi^\varepsilon(\varphi, x) = \psi(\varphi, x) (1 - \Theta_\varepsilon (\dist((\varphi, x), \{(\Phi(x), x)\})),
\] where $\psi\in\mathcal D^+(Q_{lagr})$, $\dist(.,.)$ is the distance function, and
\[
\Theta_\varepsilon(r) = \begin{cases}
    0, & r\leqslant 0, \\
    1, & r\geqslant \varepsilon,
\end{cases}
\]
is a smooth approximation of the Heaviside function.

Then, given two solutions $\mathcal U$ and $\mathcal V$, the entropy inequality holds in the limit:
\[
0 \leqslant \lim\limits_{\varepsilon\to 0}\iint\limits_{Q_{lagr}} |\mathcal U - \mathcal V|\psi^\varepsilon_x + \mathcal G(\mathcal U, \mathcal V) \psi^\varepsilon_\varphi \,d\varphi \,dx.
\]
\end{lemma}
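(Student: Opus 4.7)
The plan is to write $\psi^\varepsilon = \psi\,\chi^\varepsilon$ with $\chi^\varepsilon(\varphi,x) = 1 - \Theta_\varepsilon(\dist((\varphi,x),\Gamma^\zeta))$, apply the product rule to obtain $\psi^\varepsilon_x = \psi_x \chi^\varepsilon + \psi \chi^\varepsilon_x$ and $\psi^\varepsilon_\varphi = \psi_\varphi \chi^\varepsilon + \psi \chi^\varepsilon_\varphi$, and then split the target integral into two pieces
\[
I^\varepsilon = A^\varepsilon + B^\varepsilon, \qquad A^\varepsilon = \iint_{Q_{lagr}} \chi^\varepsilon \bigl(|\mathcal U - \mathcal V|\psi_x + \mathcal G(\mathcal U,\mathcal V)\psi_\varphi\bigr)\,d\varphi\,dx,
\]
with $B^\varepsilon$ collecting the two terms carrying $\nabla \chi^\varepsilon$. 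The first piece $A^\varepsilon$ is treated by crude bounds: since $\supp \chi^\varepsilon \cap \supp \psi$ is contained in an $\varepsilon$-tube around $\Gamma^\zeta$, its Lebesgue measure is $O(\varepsilon)$; because $\mathcal U,\mathcal V\in L^\infty_{loc}(Q_{lagr})$, the bracket $|\mathcal U - \mathcal V|$ and $\mathcal G(\mathcal U,\mathcal V)$ are bounded on $\supp \psi$, and the derivatives of $\psi$ are bounded, so $|A^\varepsilon| = O(\varepsilon) \to 0$.

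For the second piece $B^\varepsilon$, I would introduce on each side $\omega^\pm$ of $\Gamma^\zeta$ the unit outward (from $\Gamma^\zeta$) normals $\nu^\pm$, with $\nu^+ = (1,-\Phi'(x))/\sqrt{1+\Phi'(x)^2}$ and $\nu^- = -\nu^+$. Near the shock one has $\nabla \dist = \nu^\pm$ on $\omega^\pm$, hence $\nabla \chi^\varepsilon = -\Theta_\varepsilon'(\dist)\,\nu^\pm$ on each side. Using the coarea formula (since $|\nabla \dist|=1$) one writes
\[
\iint_{\omega^\pm} F(\varphi,x)\,\Theta_\varepsilon'(\dist)\,d\varphi\,dx = \int_0^\infty \Theta_\varepsilon'(t)\int_{\{\dist=t\}\cap \omega^\pm} F\,d\mathcal H^1\,dt,
\]
and since $\Theta_\varepsilon'$ acts as a smooth approximation of the Dirac mass at $0$, the inner line integrals converge (for $F$ continuous up to $\Gamma^\zeta$ from the $\omega^\pm$-side) to the corresponding one-sided trace integrals along $\Gamma^\zeta$. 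Carrying out this computation for both terms of $B^\varepsilon$ on each side, combining $\omega^+$ and $\omega^-$ contributions, and using $d\sigma = \sqrt{1+\Phi'(x)^2}\,dx$, the arc-length normalization exactly cancels the $1/\sqrt{1+\Phi'^2}$ in the normal components, giving the clean expression
\[
\lim_{\varepsilon \to 0} B^\varepsilon = -\int \Big([\mathcal G(\mathcal U,\mathcal V)] - \frac{d\Phi}{dx}[|\mathcal U-\mathcal V|]\Big)\,\psi(\Phi(x),x)\,dx,
\]
where the jump brackets are taken across $\Gamma^\zeta$ in the usual sense $\mathcal U^+ = \mathcal U(\Phi(x)+0,x)$, etc.

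The final step invokes Lemma \ref{lemma_zeta_shock_entropy_condition} pointwise on $\Gamma^\zeta$: the bracketed quantity $[\mathcal G(\mathcal U,\mathcal V)] - (d\Phi/dx)[|\mathcal U-\mathcal V|]$ is non-positive, and since $\psi \geqslant 0$ the displayed integral is non-positive, so $\lim B^\varepsilon \geqslant 0$; combined with $A^\varepsilon \to 0$ this yields the desired limit inequality. The main obstacle is keeping the geometric bookkeeping straight: tracking the correct sign of $\nabla\dist$ on each side, matching the jump convention $[\,\cdot\,] = (\cdot)^+ - (\cdot)^-$ used in Lemma \ref{lemma_zeta_shock_entropy_condition}, and justifying the trace convergence in the coarea argument despite the fact that other ($\mathcal U$- or $\mathcal V$-) shocks may intersect $\Gamma^\zeta$; however, by (W1) such intersections are isolated within $\supp\psi$, so they contribute nothing to the one-dimensional boundary integral and the traces $\mathcal U^\pm,\mathcal V^\pm$ are defined piecewise almost everywhere on $\Gamma^\zeta$, which is sufficient.
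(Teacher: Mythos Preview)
Your proposal is correct and reaches the same endpoint as the paper---the boundary integral $-\int\big([\mathcal G(\mathcal U,\mathcal V)] - \tfrac{d\Phi}{dx}[|\mathcal U-\mathcal V|]\big)\psi\,dx$ on $\Gamma^\zeta$, followed by an application of Lemma \ref{lemma_zeta_shock_entropy_condition}---but the route differs.

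The paper does not split via the product rule. Instead it first records the pointwise identity
\[
|\mathcal U-\mathcal V|_x + \mathcal G(\mathcal U,\mathcal V)_\varphi = 0
\]
inside the smoothness regions (this is the two-solution analogue of the computation in Lemma \ref{lemma-U-entropy-int-ineq}), and then integrates by parts directly: the bulk term vanishes by this identity, $\psi^\varepsilon$ is identically zero on the outer boundary of the $\varepsilon$-tube and equals $\psi$ on $\Gamma^\zeta$, so only the two one-sided integrals along $\Gamma^\zeta$ survive. Your approach avoids invoking this divergence identity: you kill the ``bulk'' piece $A^\varepsilon$ by the crude $O(\varepsilon)$ measure bound, and extract the boundary contribution $B^\varepsilon$ through the coarea formula. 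This is slightly more elementary and makes the limiting procedure more explicit, while the paper's version is shorter and more in the spirit of the preceding lemmas. Both arguments must (and do) note that other $\mathcal U$- or $\mathcal V$-shocks can meet $\Gamma^\zeta$ only at isolated points, so they contribute nothing to the line integral; you state this explicitly, the paper leaves it implicit.
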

\begin{proof}
Similar to Lemma \ref{lemma-U-entropy-int-ineq} we begin with a relation inside the smoothness areas:
\begin{align*}
|\mathcal U - \mathcal V|_x + \mathcal G(\mathcal U, \mathcal V)_\varphi & =  \Big(\mathcal U_x - \mathcal V_x  + \mathcal F(\mathcal U, \zeta)_\varphi - \mathcal F(\mathcal V, \zeta)_\varphi \Big) \sign(\mathcal U - \mathcal V) \\
&\quad\,{} +
(\mathcal U - \mathcal V + \mathcal F(\mathcal U, \zeta) - \mathcal F(\mathcal V, \zeta)) 2\delta(\mathcal U - \mathcal V) (\mathcal U_\varphi - \mathcal V_\varphi) = 0.
\end{align*}
Now it is clear that integrating by parts we obtain
\begin{align*}
\lim\limits_{\varepsilon\to 0}\iint\limits_{Q_{lagr}} & |\mathcal U - \mathcal V|\psi^\varepsilon_x + \mathcal G(\mathcal U, \mathcal V) \psi^\varepsilon_\varphi \,d\varphi \,dx \\
&{} = \int\limits_{\Gamma^{\zeta}_+} + \int\limits_{\Gamma^{\zeta}_-} |\mathcal U - \mathcal V|\psi n_x + \mathcal G(\mathcal U, \mathcal V) \psi n_\varphi d\sigma \\
&{} =
\int\limits_{\Gamma^{\zeta}} n_\varphi^+ \Big( [\mathcal G(\mathcal U, \mathcal V)] - \dfrac{d\Phi}{dx}[|\mathcal U - \mathcal V|]  \Big) \psi \, d\sigma \geqslant 0
\end{align*}
due to Lemma \ref{lemma_zeta_shock_entropy_condition}. Here $\Gamma^{\zeta}_+$ and $\Gamma^{\zeta}_-$ denote respectively the border of the area above $\Gamma^\zeta$ and below it, with respective change in normal sign.
\end{proof}

\begin{lemma}

Proposition \ref{proposition_entropy_solution_inequality} holds for all positive test functions $\psi \in \mathcal D^+(Q_{lagr})$ without restrictions on their supports.

\end{lemma}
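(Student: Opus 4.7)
The plan is to split an arbitrary positive test function into a part whose support avoids the $\zeta$-shocks and a part concentrated near them, apply Proposition \ref{proposition_entropy_solution_inequality} to the first and Lemma \ref{lemma_entropy_ineq_zeta_shock} to the second, then add the two inequalities and pass to the limit.

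Fix $\psi\in\mathcal D^+(Q_{lagr})$. Since $\psi$ has compact support and the family of $\zeta$-shocks is locally finite, only finitely many of them, say $\Gamma^\zeta_1,\ldots,\Gamma^\zeta_N$, meet $\supp\psi$. Let $d(\varphi,x)=\dist\bigl((\varphi,x),\bigcup_j \Gamma^\zeta_j\bigr)$ and, with $\Theta_\varepsilon$ from Lemma \ref{lemma_entropy_ineq_zeta_shock}, I would set
$$\psi^\varepsilon_{\rm far}=\psi\cdot(\Theta_\varepsilon\circ d),\qquad \psi^\varepsilon_{\rm near}=\psi\cdot\bigl(1-\Theta_\varepsilon\circ d\bigr).$$
Both are nonnegative, their sum equals $\psi$, and $\supp\psi^\varepsilon_{\rm far}$ lies at distance $\geq\varepsilon$ from every $\zeta$-shock, so it qualifies as a test function in Proposition \ref{proposition_entropy_solution_inequality}.

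The key algebraic step is that after applying Proposition \ref{proposition_entropy_solution_inequality} to $\psi^\varepsilon_{\rm far}$ and Lemma \ref{lemma_entropy_ineq_zeta_shock} (summed over $j=1,\ldots,N$) to $\psi^\varepsilon_{\rm near}$ and adding the inequalities, the bulk integrands combine to
$$|\mathcal U-\mathcal V|\bigl((\psi^\varepsilon_{\rm far})_x+(\psi^\varepsilon_{\rm near})_x\bigr)+\mathcal G(\mathcal U,\mathcal V)\bigl((\psi^\varepsilon_{\rm far})_\varphi+(\psi^\varepsilon_{\rm near})_\varphi\bigr)=|\mathcal U-\mathcal V|\psi_x+\mathcal G(\mathcal U,\mathcal V)\psi_\varphi,$$
so the singular $O(\varepsilon^{-1})$ contributions coming from $\nabla(\Theta_\varepsilon\circ d)$ appear with opposite signs in the two pieces and cancel identically, independently of $\varepsilon$, before any limit is taken. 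The boundary terms (which only arise from the Proposition \ref{proposition_entropy_solution_inequality} side) are weighted by $\Theta_\varepsilon\circ d$; thanks to \eqref{eq:zeta-shock-velocity-positive}, each $\Gamma^\zeta_j$ meets each of $\Gamma_\varphi$, $\Gamma_{\varphi x}$, $\Gamma_x$ in at most one point, so the boundary set $\{d\leq\varepsilon\}$ has one-dimensional measure $O(\varepsilon)$ and dominated convergence lets each boundary integral converge to its full counterpart. Sending $\varepsilon\to 0$ then yields \eqref{entropy_solutions_inequality} for the unrestricted $\psi$.

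The main obstacle I expect is handling points where two $\zeta$-shocks within $\supp\psi$ intersect, since at such points the distance function $d$ is not smooth and the cutoffs used in Lemma \ref{lemma_entropy_ineq_zeta_shock} for different curves overlap. Inside $\supp\psi$ such intersection points form a finite set, so their $\varepsilon$-neighborhoods contribute $O(\varepsilon^2)$ in area and their effect vanishes in the limit; the simple summation of Lemma \ref{lemma_entropy_ineq_zeta_shock} over $\Gamma^\zeta_1,\ldots,\Gamma^\zeta_N$ therefore remains legitimate and nonnegativity is preserved.
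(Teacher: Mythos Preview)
Your proposal is correct and follows essentially the same approach as the paper: decompose $\psi$ via a smooth cutoff at distance $\varepsilon$ from the union of $\zeta$-shocks, apply Proposition~\ref{proposition_entropy_solution_inequality} to the far piece and Lemma~\ref{lemma_entropy_ineq_zeta_shock} to the near piece, then sum and pass to the limit. Your explicit observation that the $O(\varepsilon^{-1})$ contributions from $\nabla(\Theta_\varepsilon\circ d)$ cancel identically in the sum, together with your attention to the boundary-term convergence and the finitely many shock intersection points, spells out details the paper's proof leaves implicit.
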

\begin{proof}
We unite all $\zeta$-shocks $\Gamma^\zeta_i$ and consider
\[
\varpi_\varepsilon(\varphi, x) = \Theta_\varepsilon(\dist((\varphi, x), \cup_i \Gamma^\zeta_i)),
\]
where $\Theta_\varepsilon$ is defined in Lemma \ref{lemma_entropy_ineq_zeta_shock}. Any given test function $\psi \in \mathcal D^+(Q_{lagr})$ we decompose into
\[
\psi(\varphi, x) = \psi^\varepsilon_\zeta(\varphi, x) + \psi^\varepsilon_{\mathcal U}(\varphi, x),
\]
where
\begin{align*}
\psi^\varepsilon_\zeta(\varphi, x) &= \psi(\varphi, x) (1 - \varpi_\varepsilon(\varphi, x)), \\
\psi^\varepsilon_{\mathcal U}(\varphi, x) &=
\psi(\varphi, x) \varpi_\varepsilon(\varphi, x).
\end{align*}

By Proposition \ref{proposition_entropy_solution_inequality} we have \eqref{entropy_solutions_inequality} for $\psi^\varepsilon_{\mathcal U}(\varphi, x)$ for all $\varepsilon > 0$, and by Lemma \ref{lemma_entropy_ineq_zeta_shock} it holds for $\psi^\varepsilon_{\zeta}(\varphi, x)$ in the limit as $\varepsilon \to 0$. Taking the sum of these two inequalities we prove this lemma.
\end{proof}

\iffalse
\begin{remark}
An important note here. Due to Rankine--Hugoniot condition \eqref{...} we know that in Lagrange coordinates the velocity of a $\zeta$-shock (i.e. a shock with $\zeta^+\neq \zeta^-$)
\[
v_L = \dfrac{a(\zeta^-) - a(\zeta^+)}{\zeta^- - \zeta^+} = d_1
\]
does not depend on $\mathcal U^\pm$, while in the original coordinates the velocity
\[
v = \dfrac{f(s^-, c^-) - f(s^+, c^+)}{s^- - s^+}
\]
depends on $s^\pm$. Therefore, a multitude of shocks in original coordinates with different velocities are mapped to shocks in Lagrangian coordinates with the same velocity. And if we know $\zeta^\pm$ (and thus $v_L$), there is a wide choice of $\mathcal U^\pm$ that give an admissible shock.
\end{remark}
\fi

\section{The uniqueness theorem}
\label{sec:uniqueness}

In his famous 1970 paper \cite{Kruzhkov} Kru\v{z}kov proved the weak solution uniqueness theorem for scalar conservation laws. We will follow the scheme of his proof as explained in \cite{Serre1} to prove a similar uniqueness theorem for the polymer injection system \eqref{eq:main_system_chem_flood}.

\begin{theorem}
Problem \eqref{eq:main_system_chem_flood} with initial-boundary conditions \eqref{eq:Initial_boundary_problem} satisfying the restrictions (S1)--(S3), with flow function satisfying (F1)--(F4) and adsorption satisfying (A1)--(A3) can only have a unique piece-wise $\mathcal C^1$-smooth weak solution with vanishing viscosity admissible shocks and locally bounded ``variation'' of $c$.
\end{theorem}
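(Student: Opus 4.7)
The plan is to work in the Lagrange coordinates $(\varphi,x)$ and exploit the fact that the transformed system \eqref{eq:U-lagr-eqn}--\eqref{eq:c-lagr-eqn} is triangular: the equation for $\zeta$ is a scalar conservation law with convex flux $a$ that does not involve $\mathcal U$, while the equation for $\mathcal U$ is a scalar conservation law with flux $\mathcal F(\mathcal U,\zeta)$ depending on the already-determined $\zeta$. Given two admissible solutions $(s_1,c_1)$ and $(s_2,c_2)$ with the same initial-boundary data, denote by $(\mathcal U_i,\zeta_i)$, $i=1,2$, their images in Lagrange coordinates. Note first that the zero flow area is intrinsic to the data: by Lemma \ref{lemma_zero_below}, Lemma \ref{lemma:t0_is_shocks}, Proposition \ref{prop:c_on_zero_boundary} and Corollary \ref{corollary:zero_flow_area} the curve $t_0(x)$ and the values of $c$ inside $\overline\Omega_0$ depend only on the initial-boundary conditions, so the two zero-flow regions coincide, the domain $Q_{lagr}$ is the same for both solutions, and it suffices to prove $\mathcal U_1=\mathcal U_2$ and $\zeta_1=\zeta_2$ on $Q_{lagr}$.

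The first step is to establish $\zeta_1=\zeta_2$. The integral entropy inequality of Proposition \ref{prop-zeta-entropy-integral} is exactly the classical Kru\v{z}kov inequality for the scalar law $\zeta_x+a(\zeta)_\varphi=0$ with the mixed initial-boundary datum inherited from $s_0^x$, $c_0^x$ on $\Gamma_{\varphi x}\cup\Gamma_x$ and from $\zeta_0^\varphi$ on $\Gamma_\varphi$. Applying the Kru\v{z}kov doubling-of-variables technique to the pair $(\zeta_1,\zeta_2)$ (test function $\psi(\varphi,x,\varphi',x')$, standard mollification, passing to the diagonal, then using a cone-shaped test function after dividing by the constant $L=\|a'\|_\infty$) produces the $L^1$-contraction inequality
\[
\int_{\Sigma_x}|\zeta_1-\zeta_2|\,d\varphi\le \int_{\Sigma_0}|\zeta_1-\zeta_2|\,d\varphi,
\]
where $\Sigma_x$ is a horizontal slice at height $x$ of an appropriate cone whose base lies on the boundary of $Q_{lagr}$. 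Since the boundary data for $\zeta_1$ and $\zeta_2$ coincide, the right-hand side vanishes and therefore $\zeta_1\equiv\zeta_2=:\zeta$ in $Q_{lagr}$.

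With $\zeta$ common to both solutions, the $\zeta$-shocks of $\mathcal U_1$ and $\mathcal U_2$ lie on the same curves, and Proposition \ref{proposition_entropy_solution_inequality} is available for all positive test functions $\psi\in\mathcal D^+(Q_{lagr})$ by the last lemma of Section \ref{sec:entropy}. One therefore obtains for the pair $(\mathcal U_1,\mathcal U_2)$ the inequality
\begin{align*}
0\le\ &\iint_{Q_{lagr}}|\mathcal U_1-\mathcal U_2|\psi_x+\mathcal G(\mathcal U_1,\mathcal U_2)\psi_\varphi\,d\varphi\,dx\\
&{}+\text{(boundary terms in which }\mathcal U_1=\mathcal U_2=\mathcal U_0^\varphi\text{ or }s_0^x,c_0^x\text{ match)}.
\end{align*}
Because the traces of $\mathcal U_1$ and $\mathcal U_2$ on $\Gamma_\varphi$, $\Gamma_{\varphi x}$ and $\Gamma_x$ are all determined by the same initial-boundary data $(s_0^x,c_0^x,s_0^t,c_0^t)$ through \eqref{eq:U_zeta_weak_initial_values} and the definition of $\mathcal U$, every boundary integral vanishes. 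A second application of Kru\v{z}kov's argument, now with the Lipschitz bound $L'=\sup|\mathcal F_{\mathcal U}|$ valid on the relevant compact (finite because $\mathcal U\in L^\infty_{loc}$), yields an analogous $L^1$-contraction along horizontal slices of $Q_{lagr}$, forcing $\mathcal U_1\equiv\mathcal U_2$.

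Finally, transferring back through the piecewise $\mathcal C^1$-diffeomorphism $(x,t)\leftrightarrow(\varphi,x)$ yields $s_1=s_2$ and $c_1=c_2$ in $Q_{orig}=Q\setminus\overline\Omega_0$; in $\Omega_0$ both solutions have $s=0$ by construction, and by Proposition \ref{prop:c_on_zero_boundary} their $c$-components agree with $c_0^x$. The main obstacle is the doubling-of-variables step in the presence of mixed initial-and-boundary data on the non-smooth boundary $\Gamma_{\varphi x}\cup\Gamma_x$, which has a corner at $(\varphi_0(x^0),x^0)$ and along which the outward normal rotates: one must choose the propagation cone carefully (the slope $L'$ is dictated by Proposition \ref{prop:boundary_shock_oleinik}, which guarantees that the boundary trace is consistent with the characteristic direction) and justify that the $\Gamma_x$ contribution admits a Kru\v{z}kov-type trace, exactly as handled for boundary value problems in \cite{Serre1}. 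Once this is in place the rest of the argument is the classical scalar uniqueness machinery applied twice.
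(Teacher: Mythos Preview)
Your global strategy coincides with the paper's: pass to Lagrange coordinates, use the triangular structure to first pin down $\zeta$ via the scalar Kru\v{z}kov argument on \eqref{eq:c-lagr-eqn}, then with $\zeta$ fixed run a second Kru\v{z}kov-type contraction on $\mathcal U$. But the treatment of the boundary $\Gamma_x$ contains a real gap.

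You claim that ``the traces of $\mathcal U_1$ and $\mathcal U_2$ on $\Gamma_\varphi$, $\Gamma_{\varphi x}$ and $\Gamma_x$ are all determined by the same initial-boundary data \ldots\ every boundary integral vanishes.'' This is false on $\Gamma_x$. The quantity $\mathcal U(\varphi_0(x^0)+0,x)$ is the \emph{interior} trace; in the original variables it equals $1/f(s(x-0,t_0(x)),c)$, where both $t_0(x)$ and the value of $s$ just above that front are solution-dependent. Nothing in Lemmas \ref{lemma_zero_below}--\ref{lemma:t0_is_shocks} makes $t_0(x)$ intrinsic to the data (only the domain $Q_{lagr}$ is, via $\varphi_0$). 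So two admissible solutions may have different traces on $\Gamma_x$, and the corresponding boundary integral in \eqref{entropy_solutions_inequality} does \emph{not} vanish.

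The paper does not make it vanish; it shows it has the right \emph{sign}. By Proposition \ref{prop:boundary_shock_oleinik} both traces satisfy $\mathcal F_{\mathcal U}(\mathcal U_i(\varphi_0(x^0)+0,x),\zeta)\le 0$, hence both lie in $[\mathcal U^{\max}(\zeta),\infty)$ where $\mathcal F(\cdot,\zeta)$ is decreasing by $(\mathcal F2)$. This forces $\mathcal G(\mathcal U_1,\mathcal U_2)\le 0$ on $\Gamma_x$, so the boundary integral is $\le 0$ and may be dropped. This sign argument is what closes the estimate, and it dictates the geometry of the cut-off: the paper uses a \emph{one-sided} cone with slope $M=-\min\mathcal F_{\mathcal U}$ (finite by $(\mathcal F3)$), truncating only on the large-$\varphi$ side and letting the test function reach $\Gamma_x$. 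Your proposed constant $L'=\sup|\mathcal F_{\mathcal U}|$ is $+\infty$ by $(\mathcal F2)$ (since $\mathcal F_{\mathcal U}\to+\infty$ as $\mathcal U\to 1$, and nothing prevents $s=1$), so a symmetric cone is unavailable; you are forced to let the support touch $\Gamma_x$, at which point you need the sign of the boundary term, not its vanishing. Your later remark that ``the slope $L'$ is dictated by Proposition \ref{prop:boundary_shock_oleinik}'' misreads that proposition: it controls the sign of $\mathcal F_{\mathcal U}$ at the trace, not a global Lipschitz bound.
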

\begin{proof}
To prove this theorem, we first follow Section \ref{sec2-Lagrange} to transform system \eqref{eq:main_system_chem_flood} into the Lagrange coordinate system \eqref{eq:U-lagr-eqn}, \eqref{eq:c-lagr-eqn}. Under our restrictions, this transformation gives a one-to-one mapping of solutions between original and Lagrange coordinates. Therefore, by demonstrating the uniqueness of the solution in Lagrange coordinates we prove this theorem.

Equations \eqref{eq:U-lagr-eqn} and \eqref{eq:c-lagr-eqn} are decoupled. Equation \eqref{eq:c-lagr-eqn} does not depend on $\mathcal U$ and therefore could be solved as a scalar conservation law. Due to Proposition \ref{prop-zeta-entropy-integral}, original Kru\v{z}kov's theorem is applicable with minimal adaptations for the shape of $Q_{lagr}$ and the form of the boundary conditions. Therefore, the solution for $\zeta$ is unique.

Now suppose there are two different solutions $\mathcal U$ and $\mathcal V$ of the first equation \eqref{eq:U-lagr-eqn} with fixed unique $\zeta$. Then we substitute them into \eqref{entropy_solutions_inequality}. The terms containing boundary and initial data vanish. Indeed, since both functions solve the same initial-boundary problem, functions $s_0^x(x)$ and $s_0^t(t)$ are the same for both solutions, therefore
\begin{align*}
& \mathcal U(\varphi, 0) = \mathcal V(\varphi, 0) & & \forall \varphi\geqslant 0, \\
&\mathcal U(\varphi_0(x), x) = \mathcal V(\varphi_0(x), x) & & \forall x\in(0, x^0).
\end{align*}
Moreover, due to Proposition \ref{prop:boundary_shock_oleinik} and ($\mathcal F$2) (recall Proposition \ref{prop:new-flow-function-properties}) we have
\[
\mathcal G(\mathcal U(\varphi_0(x), x), \mathcal V(\varphi_0(x), x)) \leqslant 0, \qquad \forall x \geqslant x^0.
\]
Therefore, 
\[
\int\limits^\infty_{x^0} \mathcal G(\mathcal U(\varphi_0(x^0)+0, x), \mathcal V(\varphi_0(x^0)+0, x)) \psi(\varphi_0(x^0), x) \, dx \leqslant 0,
\]
and we can eliminate almost everything in \eqref{entropy_solutions_inequality} and arrive at
\[
\iint\limits_{Q_{lagr}} |\mathcal U - \mathcal V|\psi_x + \mathcal G(\mathcal U, \mathcal V) \psi_\varphi \,d\varphi \,dx \geqslant 0.
\]

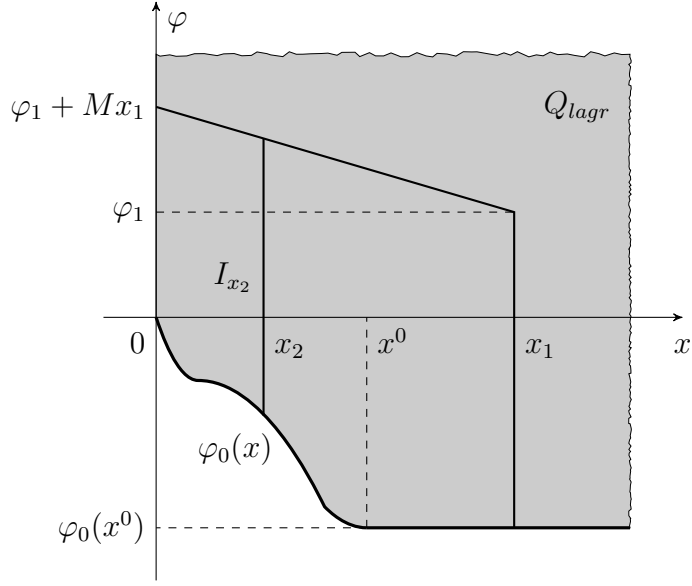
\begin{figure}[ht!]
    \begin{minipage}{\linewidth}
    \begin{center}
    \def\lowborder{-5}
\def\leftborder{-1}
\def\upperborder{6}
\def\rightborder{10}
\def\startpoint{\rightborder * 0.4}
\def\phinote{0.8 * \lowborder}
\def\eps{1}

\begin{tikzpicture}[>=stealth', yscale=0.7, xscale=0.7]
	\begin{scope}
		\filldraw[fill=black!20, ultra thin]
		(0, 0)  parabola[bend at end] (\startpoint * 0.2, \phinote * 0.3)
		parabola (\startpoint * 0.8, \phinote *0.9) parabola[bend at end] (\startpoint, \phinote) -- (\rightborder - \eps, \phinote)
		decorate [decoration={random steps,segment length=1pt,amplitude=0.5pt}] {-- (\rightborder - \eps, \upperborder - \eps)}
		decorate [decoration={random steps,segment length=3pt,amplitude=1pt}] {--(0, \upperborder - \eps)}
		-- (0, 0) -- cycle;
		\draw[thin,->] ({\leftborder}, 0) -- (\rightborder, 0) node[below] {$x\vphantom{x^0}$};
		\draw[very thick, black] (0, 0)  parabola[bend at end] (\startpoint * 0.2, \phinote * 0.3)
		parabola (\startpoint * 0.8, \phinote *0.9) parabola[bend at end] (\startpoint, \phinote);
		\node[] at (\rightborder * 0.8, 4) {$Q_{lagr}$};
		\node[below right] at (\startpoint, 0) {$x^0$};
		\node[below left] at (0, 0) {$0\vphantom{x^0}$};
		\draw[very thick, black] (\startpoint, \phinote) -- (\rightborder - \eps, \phinote);
		\draw[dashed]  (\startpoint, \phinote) -- (\startpoint, 0);
		\draw[dashed] (\startpoint, \phinote) -- (0, \phinote);
		\node[left] at (0, \phinote) {$\varphi_0(x^0  )$};
		\node[below] at (\startpoint/2-0.5, \phinote/2) {$\varphi_0(x)$};
	\end{scope}
	\draw[thin,->] (0,\lowborder) -- (0,\upperborder) node[below right ] {$\varphi$};
	\node[left] at (0, \upperborder/3) {$\varphi_1$};
	\node[left] at (0, \upperborder*2/3) {$\varphi_1 + Mx_1$};
 	\node[below right] at (\startpoint *1.7, 0) {$x_1\vphantom{x^0}$};
	\draw[thick] (0, \upperborder*2/3) -- (\startpoint *1.7, \upperborder/3) node[pos=0.3, above](Ipoint){} -- (\startpoint *1.7, \phinote);
	\draw[dashed] (0, \upperborder/3) -- (\startpoint *1.7, \upperborder/3);
	\draw[thick] (Ipoint) -- (\startpoint *1.7*0.3, -1.87) node[left, pos=0.5]{$I_{x_2}$};
    \node[below right] at (\startpoint *1.7*0.3, 0) {$x_2\vphantom{x^0}$};
\end{tikzpicture}
    \end{center}
    \end{minipage}
    \caption{The section $I_{x_2}$ of the domain of integration.}
    \label{fig:thm1-pic}
\end{figure}

Denote by $M = -\min \mathcal{F}_{\mathcal U}$. Fix a point $(\varphi_1, x_1) \in Q_{lagr}$. We assume $\varphi_1 > 0, x_1 > x^0$, other cases are similar. Consider a cut-off function $$\varpi(\varphi, x) = \Theta_\varepsilon(\varphi_1 - \varphi + M(x_1-x))$$ and an arbitrary scalar function $\chi(x) \in \mathcal D^+([0, \infty))$. Construct a positive test function
\[
\psi(\varphi, x) = \chi(x) \varpi(\varphi, x).
\]
Expand
\begin{align*}
|\mathcal U &- \mathcal V|\psi_x + \mathcal G(\mathcal U, \mathcal V) \psi_\varphi \\
&= \chi'(x) \varpi(\varphi, x) |\mathcal U - \mathcal V| - \chi(x) \Theta'_\varepsilon(\varphi_1 - \varphi + M(x_1-x)) \Big( M |\mathcal U - \mathcal V| + \mathcal G(\mathcal U, \mathcal V) \Big).
\end{align*}
Note that $\Theta'_\varepsilon \geqslant 0$, and due to the choice of $M$ we have
\[
M |\mathcal U - \mathcal V| + \mathcal G(\mathcal U, \mathcal V) \geqslant 0.
\]
Therefore, we arrive at
\[
\iint\limits_{Q_{lagr}} \chi'(x) \Theta_\varepsilon(\varphi_1 - \varphi + M(x_1-x)) |\mathcal U - \mathcal V| \,d\varphi \,dx \geqslant 0.
\]
Now we denote $I_x = (\varphi_0(\min(x, x^0)), \varphi_1 + M(x_1 - x))$ the intervals in the vertical sections of the limiting ``support'' of $\psi$ (see an example $I_{x_2}$ on Fig.~\ref{fig:thm1-pic}) and
\[
h(x) = \int\limits_{I_x} |\mathcal U(\varphi, x) - \mathcal V(\varphi, x)| \, d\varphi.
\]
As we tend $\varepsilon \to 0$, we arrive at
\[
\int\limits_0^\infty \chi'(x) h(x) \, dx \geqslant 0
\]
for any positive test function $\chi$. Since we have $h(0) = 0$ from the initial data, $h$ is classically decreasing (integrating by parts we arrive at $h'\leqslant 0$ as a generalized function), and we arrive at 
\[
h(x_1) = \int\limits_{\varphi_0(x^0)}^{\varphi_1} |\mathcal U(\varphi, x_1) - \mathcal V(\varphi, x_1)| \, d\varphi = 0 \qquad \forall \varphi_1 > 0, x_1 > x^0.
\]
Similarly, for $x_1 \leqslant x^0$ we obtain
\[
h(x_1) = \int\limits_{\varphi_0(x_1)}^{\varphi_1} |\mathcal U(\varphi, x_1) - \mathcal V(\varphi, x_1)| \, d\varphi = 0 \qquad \forall \varphi_1 > 0, x_1 \leqslant x^0.
\]
and we conclude that $\mathcal U = \mathcal V$.

\end{proof}

\section{Discussions}
\label{sec:discussions}

As mentioned above, our proposed form of admissibility is convenient for applications, since it is easier to verify, than classical vanishing viscosity. However, the restrictions listed in Definition \ref{def:solution} somewhat limit the scope of the result. Therefore, in future works we aim to lift or weaken every restriction we can.

Future research may include:
\begin{itemize}
    \item Application of the theorem to some known solutions (e.g. polymer slug) to verify and adjust the proposed heuristics used in their construction (notably, the Jouguet principle used to construct characteristics in the area where characteristics from the boundaries do not arrive).
    \item Weakening or eliminating the restriction (W2).
    \item Derivation of the properties (W3) and (W4) from the classical form of the vanishing viscosity condition. 
    \item Investigation into the existence of initial-boundary conditions that lead to the formation of a locally infinite number of shocks.
    \item Investigation into weakening or lifting the restrictions on the initial-boundary data (S1)--(S3).
    \item Generalizing the theorem to more general classes of $f$ and $a$ without some of the restrictions (F1)--(F4), (A1)--(A3).
    \item Similar results for other systems of two equations that allow for the splitting of equations under an appropriate Lagrange coordinate transformation (e.g. thermal problems with the second equation governing the transport of heat).
\end{itemize}

\section*{Acknowledgements}

The authors thank Pavel Bedrikovetsky for lectures on systems of hyperbolic conservation laws.
Research is supported by the Russian Science Foundation (RSF) grant 19-71-30002.
\bigskip
\bigskip

\begin{enbibliography}{99}
\addcontentsline{toc}{section}{References}

\bibitem{Pires2021}
Apolin\'{a}rio, F.O. and Pires, A.P., 2021. Oil displacement by multicomponent slug injection: An analytical solution for Langmuir adsorption isotherm. Journal of Petroleum Science and Engineering, 197, p.~107939.

\bibitem{Bahetal}
Bakharev, F., Enin, A., Petrova, Y. and Rastegaev, N., 2023. Impact of dissipation ratio on vanishing viscosity solutions of the Riemann problem for chemical flooding model. Journal of Hyperbolic Differential Equations, 20(02), pp.~407-432.

\bibitem{Tapering}
Bakharev, F., Enin, A., Kalinin, K., Petrova, Y., Rastegaev, N. and Tikhomirov, S., 2023. Optimal polymer slugs injection profiles. Journal of Computational and Applied Mathematics, 425, p.~115042.

\bibitem{BL} Buckley, S.~E. and Leverett, M., 1942. Mechanism of fluid displacement in sands. Transactions of the AIME, 146(01), pp.~107-116.

\bibitem{Castaneda}
Casta\~{n}eda, P., 2016. Dogma: S-shaped. The Mathematical Intelligencer, 38, pp.~10-13.

\bibitem{Courant}
Courant, R., 1944. Supersonic Flow and Shock Waves: A Manual on the Mathematical Theory of Non-linear Wave Motion (No. 62). Courant Institute of Mathematical Sciences, New York University.

\bibitem{Gelfand}
Gelfand, I.~M., 1959. Some problems in the theory of quasilinear equations. Uspekhi Matematicheskikh Nauk, 14(2), pp.~87-158 (in Russian). English translation in Transactions of the American Mathematical Society, 29(2), 1963, pp.~295-381.

\bibitem{JnW}
Johansen, T. and Winther, R., 1988. The solution of the Riemann problem for a hyperbolic system of conservation laws modeling polymer flooding. SIAM journal on mathematical analysis, 19(3), pp.~541-566.

\bibitem{Kruzhkov}
Kru\v{z}kov, S.N., 1970. First order quasilinear equations in several independent variables. Mathematics of the USSR-Sbornik, 10(2), pp.~217-243.

\bibitem{Oleinik}
Oleinik, O.~A., 1957. Discontinuous solutions of non-linear differential equations. Uspekhi Matematicheskikh Nauk, 12(3)(75), pp.~3-73 (in Russian). English translation in American Mathematical Society Translations, 26(2), 1963, pp.~95-172.

\bibitem{PiBeSh06}
Pires, A.P., Bedrikovetsky, P.G. and Shapiro, A.A., 2006. A splitting technique for analytical modelling of two-phase multicomponent flow in porous media. Journal of Petroleum Science and Engineering, 51(1-2), pp.~54-67.

\bibitem{RastS-Shaped}
Rastegaev, N., 2023. On the sufficient conditions for the S-shaped Buckley-Leverett function. arXiv preprint arXiv:2303.16803.

\bibitem{Serre1}
Serre, D. Systems of Conservation Laws 1: Hyperbolicity, entropies, shock waves. Cambridge University Press, 1999.

\bibitem{Shen}
Shen, W., 2017. On the uniqueness of vanishing viscosity solutions for Riemann problems for polymer flooding. Nonlinear Differential Equations and Applications NoDEA, 24, pp.~1-25.

\bibitem{Wa87}
Wagner, D.H., 1987. Equivalence of the Euler and Lagrangian equations of gas dynamics for weak solutions. Journal of differential equations, 68(1), pp.~118-136. 

\end{enbibliography}

\end{document}